\newcommand{\vertiii}[1]{{\left\vert\kern-0.25ex\left\vert\kern-0.25ex\left\vert #1
		\right\vert\kern-0.25ex\right\vert\kern-0.25ex\right\vert}}
\theoremstyle{plain}
\newtheorem{Thm}{Theorem}[section]
\newtheorem{Prop}{Proposition}[section]
\newtheorem{Lem}{Lemma}[section]
\newtheorem{Cor}{Corollary}[section]
\newtheorem{Fact}{Fact}[section]
\theoremstyle{definition}
\newtheorem{Def}{Definition}[section]
\theoremstyle{remark}
\newtheorem{Rmk}{Remark}[section]
\renewcommand{\epsilon}{\varepsilon}
\title{Non-Autonomous Spatial-Temporal Differentiation Theorems for Group Endomorphisms}
\author{I. Assani$^1$, A. Young$^2$}
\address{University of North Carolina at Chapel Hill}
\email{$^1$assani@email.unc.edu, $^2$aidanjy@live.unc.edu}
\subjclass[2020]{Primary 37B55, Secondary 28D15, 28D05}
\begin{document}
	
	\maketitle
	
	\begin{abstract}
	We introduce a non-autonomous generalization of spatial-temporal differentiations, and prove results about probabilistically and topologically generic behaviors of certain spatial-temporal differentations generated by endomorphisms of compact abelian metrizable groups.
	\end{abstract}

	In \cite{StartOfAssani-Young}, we introduced the notion of a spatial-temporal differentiation problem. Here, we introduce a generalization of this concept to the setting of non-autonomous dynamical systems, and prove probabilistic and topological results about certain random spatial-temporal differentiations on compact abelian metrizable groups.
	
	This paper is organized as follows:
	\begin{itemize}
		\item In Section \ref{Intro}, we provide a definition of non-autonomous dynamical systems for our purposes. We also describe what a spatial-temporal differentiation problem would look like in this non-autonomous setting.
		\item In Section \ref{Harmonic analysis}, we introduce the notion of uniform distribution, and describe how uniform distribution in a compact group is related to the representation theory of that group. We end in proving a metric result about the uniform distribution of the trajectory of a point under a sequence of group endomorphisms under the hypothesis that the group endomorphisms satisfy a property we call the \emph{Difference Property}.
		\item In Section \ref{Difference Property}, we consider questions about when the Difference Property makes the group endomorphisms surjective, and whether a sequence with the Difference Property can exist on a given group.
		\item In Section \ref{Probabilistic results}, we prove a probabilistic result about non-autonomous spatial-temporal differentiations relative to a sequence of group endomorphisms with the Difference Property and a sequence of concentric balls with rapidly decaying radii, demonstrating that the set of $x \in G$ which generate well-behaved spatial-temporal differentiations is of full measure.
		\item In Section \ref{Not concentric}, we prove a probabilistic result about uniformly distributed sequences of the form $\left( T_n \cdots T_1 g_{\Lambda_n} \right)_{n = 0}^\infty$, where $(g_n)_{n = 0}^\infty \in G^{\mathbb{N}_0}$ and $(\Lambda_n)_{n = 1}^\infty$ is an increasing sequence of natural numbers.
		\item In Section \ref{Topological results}, we prove a topological counterpoint to Theorem \ref{Main Theorem}, demonstrating that the set of $x \in G$ which generate pathological spatial-temporal differentiations is comeager.
	\end{itemize}

	We thank the referee for their careful reading of this paper.
	
	\section{Introducing non-autonomous dynamical systems}\label{Intro}
	
	Our definition of a non-autonomous dynamical system is inspired by the "process formulation" found in \cite{Kloeden}, though adapted for our ergodic-theoretic purposes. We state the definition in excess generality, because it is more important to us that the definition capture the \emph{concept} of non-autonomy; the study of (autonomous) dynamical systems comes in many diverse flavors, so we would like our definition of non-autonomous dynamical systems to reflect that diversity.
	
	Let $\mathbb{N}_0 = \mathbb{Z} \cap [0, \infty)$ be the semigroup of nonnegative integers (distinguished from the set $\mathbb{N}$ of strictly positive integers), and let $X$ be an object in a category $\mathcal{C}$. Let $\operatorname{Hom}_{\mathcal{C}}(X, X)$ denote the semigroup of endomorphisms on $X$ in the category $\mathcal{C}$. A \emph{non-autonomous dynamical system} is a pair $(X, \tau)$, where $\tau$ is a family of maps $\{ \tau(s, t) \in \operatorname{Hom}_{\mathcal{C}}(X, X) \}_{s, t \in \mathbb{N}_0, s \geq t}$ satisfying the following conditions.
	\begin{enumerate}
		\item $\tau(s, s) = \operatorname{id}_{X}$ for all $s \in \mathbb{N}_0$
		\item $\tau(s, u) = \tau(s, t) \tau(t, u)$ for all $s, t, u \in \mathbb{N}_0, s \geq t \geq u$,
	\end{enumerate}
	where the composition of endomorphisms of $X$ is abbreviated as multiplication. We refer to $\tau$ as the \emph{process}.
	
	The essential difference between an autonomous and a non-autonomous system is that the transition map $\tau(s, t)$ is dependent on both the "starting time" $t$ and the "ending time" $s$. The system would be autonomous if it had the additional property that $\tau(s, t) = \tau(s - t, 0)$ for all $s , t \in \mathbb{N}_0 , s \geq t$, indicating that the transition map depends only on the elapsed time between $t$ and $s$.
	
	Similar to how an autonomous dynamical system can be treated in terms of either an action of $\mathbb{N}_0$ on a phase space, or equivalently in terms of its generating transformation $T$, a non-autonomous dynamical system as we have formulated it above can be understood in terms of a family of generators $T_t = \tau(t, t - 1), t \in \mathbb{N}$. Likewise, a family of generators $\{T_t \in \operatorname{Hom}_{\mathcal{C}}(X, X) \}_{t \in \mathbb{N}}$ can be understood as generating a non-autonomous dynamical system by
	\begin{align*}
		\tau(s, t)	& = \tau(s, s - 1) \tau(s - 1, s - 2) \cdots \tau(t + 1, t) \\
		& = T_s T_{s - 1} \cdots T_{t + 1} .
	\end{align*}
	The approaches are equivalent, but we will typically be approaching these non-autonomous systems from the perspective of starting with the generators $(T_n)_{n = 1}^\infty$ and building $\tau(\cdot, \cdot)$ from that sequence.
	
	For our purposes, that category $\mathcal{C}$ will be the category whose objects are compact topological spaces $X$ endowed with Borel probability measures $\mu$, and whose morphisms are continuous maps. We will not in general assume these maps are measure-preserving. Though the assumption that maps are measure-preserving is typically vital in the autonomous setting, we will eventually be considering situations where interesting results are possible without the explicit assumption that the maps in question are measure-preserving. For measurable sets $F$ with $\mu(F) > 0$, set $\alpha_F(f) = \frac{1}{\mu(F)} \int_F f \mathrm{d} \mu$. We are interested in questions of the following forms:
	\begin{itemize}
		\item Let $(F_k)_{k = 1}^\infty$ be a sequence of Borel subsets of $X$ for which $\mu(F_k) > 0$, and let $f \in L^\infty(X, \mu)$. Then what can be said about the limiting behavior of
		$$\left( \alpha_{F_k} \left( \frac{1}{k} \sum_{i = 0}^{k - 1} T_i T_{i - 1} \cdots T_1 f \right) \right)_{k = 1}^\infty ?$$
		\item Suppose the $F_k = F_k(x)$ are "indexed" by $x \in X$. Then can we make any probabilistic claims about the generic behavior of the sequence $\left( \alpha_{F_k(x)} \left( \frac{1}{k} \sum_{i = 0}^{k - 1} T_i T_{i - 1} \cdots T_1 f \right) \right)_{k = 1}^\infty$?
		\item Under the same conditions, can we make any topological claims about the generic behavior of the sequence $\left( \alpha_{F_k(x)} \left( \frac{1}{k} \sum_{i = 0}^{k - 1} T_i T_{i - 1} \cdots T_1 f \right) \right)_{k = 1}^\infty$?
	\end{itemize}
	Theorem \ref{Main Theorem} is a result of the second type, describing the probabilistically generic behavior of a spatial-temporal differentiation along the sequence $B_k(x)$, where $B_k(x)$ is a ball centered at $x$ with radius decaying rapidly to $0$. Theorems \ref{Mance Theorem} and \ref{Stronger Mance Theorem} are of the third type, describing the topologically generic behavior of a spatial-temporal differentiation along the sequence $B_k(x)$.
	
	\section{Uniform distribution and harmonic analysis}\label{Harmonic analysis}
	
	Before proceeding, we define the notion of uniform distribution. The study of uniformly distributed sequences began with Weyl's investigation of "uniform distribution modulo $1$", expanding on Kronecker's Theorem in Diophantine approximation \cite{WeylUD}. This notion was then extended by Hlawka to apply to compact probability spaces \cite{HlawkaUD}. The study of uniform distribution in compact groups in particular was first initiated by Eckmann in \cite{Eckmann}; though Eckmann's initial definition of uniform distribution for compact groups contained a significant error, the initial paper still contained several foundational results in the theory, including the Weyl Criterion for Compact Groups (Proposition \ref{Weyl Criterion}). For a more through history of the topic, the reader is referred to the note at the end of 4.1 in \cite{KN}.
	
	\begin{Def}
		Let $X$ be a compact Hasudorff topological space endowed with a regular Borel probability measure $\mu$. A sequence $(x_n)_{n = 0}^\infty$ in $X$ is called \emph{uniformly distributed} with respect to the measure $\mu$ if
		$$\frac{1}{k} \sum_{i = 0}^{k - 1} f(x_i) \stackrel{k \to \infty}{\to} \int f \mathrm{d} \mu$$
		for all $f \in C(X)$.
	\end{Def}
	
	Let $G$ denote a compact topological group with identity element $1$ and Haar probability measure $\mu$. Throughout this section, we will be dealing only with the dynamics of compact groups, so $G$ will always denote a compact group, $\mu$ will always refer to the Haar probability measure on the compact group $G$, and $\operatorname{Bo}(X)$ will always refer to the Borel $\sigma$-algebra on a topological space $X$. We  will also write $L^p(G) : = L^p(G, \mu)$, taking the measure $\mu$ to be understood.
	
	When it comes to topological groups, the uniform distribution of topological groups can be characterized in terms of the representation theory of the group. We review here some important concepts from the representation theory of topological groups so that we may state this relation. Our brisk summary of the basic representation theory of compact groups mostly follows \cite{FollandHarmonic}.
	
	Let $\mathbb{U}(\mathcal{H})$ denote the group of unitary operators on a Hilbert space $\mathcal{H}$, where we endow $\mathbb{U}(\mathcal{H})$ with the strong operator topology. A \emph{unitary representation of $G$ on $\mathcal{H}$} is a continuous group homomorphism $\pi : G \to \mathbb{U}(\mathcal{H})$. Though non-unitary representations exist, we will be dealing here exclusively with unitary representations, so we do not bother to define non-unitary representations.
	
	We call a closed subspace $\mathcal{M}$ of $\mathcal{H}$ an \emph{invariant subspace} for a unitary representation $\pi : G \to \mathbb{U}(\mathcal{H})$ if $\pi(x) \mathcal{M} \subseteq \mathcal{M}$ for all $x \in G$. Since $\pi(x)|_{\mathcal{M}} : \mathcal{M} \to \mathcal{M}$ is unitary on $\mathcal{M}$, we call $\pi^{\mathcal{M}} : x \mapsto \pi(x)|_{\mathcal{M}} \in \mathbb{U}(\mathcal{M})$ the \emph{subrepresentation} of $\pi$ corresponding to $\mathcal{M}$. A unitary representation $\pi : G \to \mathbb{U}(\mathcal{H})$ is called \emph{irreducible} if its only invariant subspaces are $\mathcal{H}$ and $\{0\}$. Two unitary representations $\pi_1 : G \to \mathbb{U}(\mathcal{H}_1), \pi_2 : G \to \mathbb{U}(\mathcal{H}_2)$ are called \emph{unitarily equivalent} if there exists a unitary map $U : \mathcal{H}_1 \to \mathcal{H}_2$ such that $\pi_2(x) = U \pi_1(x) U^{-1}$ for all $x \in G$. We denote by $[\pi]$ the unitary equivalence class of a representation $\pi : G \to \mathbb{U}(\mathcal{H})$.
	
	\begin{Fact}\label{Compact group reps}
		If $G$ is a compact group, then for every irreducible unitary representation $\pi : G \to \mathbb{U}(\mathcal{H})$, the space $\mathcal{H}$ is of finite dimension.
	\end{Fact}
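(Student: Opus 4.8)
The plan is to produce a single nonzero, compact, self-adjoint operator on $\mathcal{H}$ that commutes with the entire representation, and then to invoke Schur's Lemma to conclude that this operator must be a scalar multiple of the identity; since a nonzero scalar multiple of the identity can be compact only when $\mathcal{H}$ is finite-dimensional, the Fact follows immediately. This is the classical route underlying the Peter--Weyl theory (see \cite{FollandHarmonic}), and the compactness of $G$ enters twice: once to guarantee existence of the Haar average below, and once to force that average to be genuinely compact.

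Concretely, I would fix a unit vector $v \in \mathcal{H}$, let $P = \langle \cdot, v \rangle v$ be the rank-one orthogonal projection onto $\mathbb{C} v$, and define
$$T = \int_G \pi(x) P \pi(x)^{-1} \, \mathrm{d}\mu(x),$$
interpreted as a Bochner integral in the bounded operators $B(\mathcal{H})$. Since $\pi$ is unitary, a direct computation gives $\pi(x) P \pi(x)^{-1} = \langle \cdot, \pi(x) v \rangle \pi(x) v$, so the integrand is a rank-one positive operator depending on $x$ only through the vector $\pi(x) v$. Three routine checks then follow: (i) $T$ commutes with every $\pi(y)$, by the substitution $x \mapsto yx$ and left-invariance of Haar measure; (ii) $T$ is self-adjoint and positive, as each integrand is; and (iii) $T \neq 0$, since $\langle T v, v \rangle = \int_G |\langle v, \pi(x) v \rangle|^2 \, \mathrm{d}\mu(x)$ has a continuous, nonnegative integrand that is strictly positive at $x = 1$.

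The step I expect to be the main obstacle is verifying that $T$ is a \emph{compact} operator, because $\pi$ is continuous only for the strong operator topology, whereas identifying a Bochner integral of compact operators as compact needs control in the operator norm. The resolution exploits the rank-one shape of $P$: the map $x \mapsto \pi(x) v$ is norm-continuous from the compact space $G$ into $\mathcal{H}$, and $u \mapsto \langle \cdot, u \rangle u$ is norm-continuous from $\mathcal{H}$ into $B(\mathcal{H})$, so $x \mapsto \pi(x) P \pi(x)^{-1}$ is continuous for the operator norm. It is therefore a norm-continuous function on a compact space taking values in the closed subspace $K(\mathcal{H})$ of compact operators, whence its Bochner integral $T$ again lies in $K(\mathcal{H})$.

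Finally I would apply Schur's Lemma. Because $\pi$ is irreducible and $T$ is a bounded self-adjoint operator commuting with every $\pi(y)$, each spectral projection of $T$ commutes with $\pi$ and hence cuts out an invariant subspace, which must be $\{0\}$ or $\mathcal{H}$; thus $T = \lambda \operatorname{Id}$ for a scalar $\lambda$, and positivity together with $T \neq 0$ gives $\lambda > 0$. But then $\operatorname{Id}_{\mathcal{H}} = \lambda^{-1} T$ is compact, which is impossible when $\dim \mathcal{H} = \infty$, since the closed unit ball of an infinite-dimensional Hilbert space is not norm-compact. Hence $\dim \mathcal{H} < \infty$, as claimed.
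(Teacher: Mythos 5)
Your proof is correct and takes essentially the same approach as the paper, which does not argue the Fact itself but simply cites \cite[Theorem 5.2]{FollandHarmonic}; the proof there is exactly this averaging argument (average a rank-one projection over Haar measure to get a nonzero, compact, self-adjoint operator commuting with $\pi$, apply Schur's Lemma, and note that a nonzero scalar multiple of the identity is compact only in finite dimension). Your two refinements --- handling compactness via norm-continuity of $x \mapsto \pi(x) P \pi(x)^{-1}$ into $K(\mathcal{H})$ and getting $T \neq 0$ from positivity of the integrand at the identity (using that Haar measure charges every nonempty open set) --- are both sound.
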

	
	\begin{proof}
	\cite[Theorem 5.2]{FollandHarmonic}.
	\end{proof}
	
	Given a unitary representation $\pi : G \to \mathbb{U}(\mathcal{H})$, we define the \emph{dimension} $\dim \pi$ of $\pi$ to be $\dim (\mathcal{H})$; since a unitary equivalence of representations induces a unitary isometry between the spaces they act on, we can conclude that the dimension of a representation is invariant under unitary equivalence. We use $\hat{G}$ to denote the family of unitary equivalence classes of irreducible unitary representations of $G$. We note that this notation is consistent with the use of $\hat{G}$ to refer to the Pontryagin dual of a locally compact abelian group $G$, since when $G$ is locally compact abelian, the irreducible unitary representations are exactly the continuous homomorphisms $G \to \mathbb{S}^1 \subseteq \mathbb{C}$.
	
	In particular, there will always exist at least one irreducible representation of dimension $1$, specifically the map $x \mapsto 1 \in \mathbb{C}$. We call this the \emph{trivial representation} of $G$.
	
	Given an irreducible unitary representation $\pi : G \to \mathbb{U}(\mathcal{H})$, we define the \emph{matrix elements} of $\pi$ to be the functions $G \to \mathbb{C}$ given by
	\begin{align*}x & \mapsto \left< \pi(x) u, v \right> & (u, v \in \mathcal{H}) . \end{align*}
	Because $\pi$ is continuous with respect to the strong operator topology on $\mathcal{H}$, it follows that the matrix elements are continuous. Given an orthonormal basis $\{ e_i \}_{i = 1}^n$ of $\mathcal{H}$, we define the functions $\{ \pi_{i, j} \}_{i, j = 1}^n$ by
	$$\pi_{i, j}(x) = \left< \pi(x) e_i , e_j \right> .$$
	These $\pi_{i, j}$ in fact define matrix entries for $\pi$ in the basis $\{ e_{i} \}_{i = 1}^n$.
	
	In particular, the trivial representation will have constant matrix elements.
	
	This is sufficient framework to state the results we will be drawing on.
	\begin{Fact}\label{Peter-Weyl}[Peter-Weyl Theorem]
		Let $G$ be a compact group, and let $V \subseteq C(G)$ be the subspace of $C(G)$ spanned by
		$$\left\{ \pi_{p , q} : p , q = 1, \ldots, \dim \pi; [\pi] \in \hat{G} \right\} .$$
		Then $V$ is dense in $C(G)$ with respect to the uniform norm. Furthermore,
		$$\left\{ \sqrt{\dim \pi} \pi_{p , q} : p , q = 1, \ldots, \dim \pi; [\pi] \in \hat{G} \right\} $$
		is an orthonormal basis for $L^2(G)$.
	\end{Fact}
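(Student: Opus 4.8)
The plan is to prove the statement in two stages --- first the orthonormality of the normalized matrix coefficients, then their completeness --- and to derive the uniform density in $C(G)$ as a consequence. The principal analytic input will be the spectral theorem for compact self-adjoint operators, applied to convolution operators on $L^2(G)$; everything else is formal bookkeeping resting on Schur's lemma and the Stone--Weierstrass theorem. I would begin with the Schur orthogonality relations. Given irreducible unitary representations $\pi, \sigma$ and an arbitrary linear map $A : \mathcal{H}_\pi \to \mathcal{H}_\sigma$, one averages against the group to form the intertwining operator $\tilde{A} = \int_G \sigma(x) A \pi(x)^{-1} \, \mathrm{d}\mu(x)$. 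A direct change of variables shows $\sigma(y)\tilde{A} = \tilde{A}\pi(y)$, so Schur's lemma forces $\tilde{A} = 0$ when $[\pi] \neq [\sigma]$ and $\tilde{A} = \lambda \operatorname{id}$ when $\pi = \sigma$, with $\lambda = (\operatorname{tr} A)/\dim\pi$ read off from the trace. Specializing $A$ to matrix units and comparing entries yields $\langle \pi_{p,q}, \sigma_{r,s}\rangle_{L^2} = \tfrac{1}{\dim\pi}\,\delta_{[\pi],[\sigma]}\,\delta_{p,r}\,\delta_{q,s}$, which is exactly the assertion that $\{\sqrt{\dim\pi}\,\pi_{p,q}\}$ is an orthonormal system.

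The substantive step is completeness. Let $\mathcal{E} \subseteq L^2(G)$ be the closed linear span of all matrix coefficients; I want $\mathcal{E} = L^2(G)$. Both $\mathcal{E}$ and $\mathcal{E}^\perp$ are invariant under the right regular representation $R$, where $(R(a)g)(x) = g(xa)$. Suppose toward a contradiction that $\mathcal{E}^\perp \neq \{0\}$ and fix $0 \neq f \in \mathcal{E}^\perp$. Choose a continuous self-adjoint approximate identity $\phi$, meaning $\phi(x) = \overline{\phi(x^{-1})}$ with $\phi$ concentrated near $1$; then the convolution operator $T_\phi g = \phi * g$ is Hilbert--Schmidt (hence compact), self-adjoint, and commutes with $R$, and $T_\phi f \to f$ in $L^2$ as $\phi$ sharpens, so $T_\phi f \neq 0$ for suitable $\phi$. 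By the spectral theorem $T_\phi$ has a nonzero eigenvalue whose eigenspace is finite-dimensional and contained in $\mathcal{E}^\perp$; since $T_\phi$ commutes with $R$, this eigenspace is $R$-invariant, and being finite-dimensional and nonzero it contains an irreducible $R$-invariant subspace $W_0$. Eigenfunctions of $T_\phi$ for a nonzero eigenvalue are continuous, because convolution against the continuous $\phi$ produces continuous functions, so pointwise evaluation at $1$ is legitimate; writing $R(g)e_j = \sum_k \langle R(g)e_j, e_k\rangle e_k$ for an orthonormal basis of $W_0$ and evaluating at $1$ gives $e_j(g) = \sum_k \langle R(g)e_j, e_k\rangle e_k(1)$, exhibiting each element of $W_0$ as a linear combination of matrix coefficients of the irreducible representation $R|_{W_0}$. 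Thus $W_0 \subseteq \mathcal{E}$, contradicting $0 \neq W_0 \subseteq \mathcal{E}^\perp$. Hence $\mathcal{E} = L^2(G)$, and the normalized matrix coefficients form a complete orthonormal system.

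For uniform density in $C(G)$ I would invoke Stone--Weierstrass on the linear span $\mathcal{A}$ of all matrix coefficients. This $\mathcal{A}$ contains the constants via the trivial representation, is closed under complex conjugation since $\overline{\pi_{i,j}}$ is a matrix coefficient of the conjugate representation $\bar{\pi}$, and is closed under multiplication since a product of matrix coefficients of $\pi$ and $\sigma$ is a matrix coefficient of $\pi\otimes\sigma$, which decomposes into irreducibles. That $\mathcal{A}$ separates points follows from the completeness just proved: if some $g \neq 1$ satisfied $\pi(g) = \operatorname{id}$ for every irreducible $\pi$, then every matrix coefficient --- and hence, by completeness, every element of $L^2(G)$ --- would be invariant under right translation by $g$, which is false. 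Stone--Weierstrass then delivers uniform density of $\mathcal{A}$ in $C(G)$. The main obstacle throughout is precisely the completeness step: the compactness of the convolution operators, which underwrites the finite-dimensional spectral decomposition and thereby the whole finite-dimensional representation theory of Fact \ref{Compact group reps}, is the one genuinely analytic ingredient, and I would expect it to be where the proof's weight lies.
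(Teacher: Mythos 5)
The paper does not actually prove this statement: it is recorded as a Fact whose proof is the single citation \cite[Theorem 5.12]{FollandHarmonic}. Your argument is correct and is essentially the standard proof found in such references---Schur orthogonality via averaged intertwiners, completeness via compact self-adjoint convolution operators and the spectral theorem, uniform density via Stone--Weierstrass---so it supplies exactly the argument the paper outsources; the one step you assert without justification (that the nonzero eigenspace can be taken inside $\mathcal{E}^\perp$) is routine, since $\mathcal{E}$, and hence $\mathcal{E}^\perp$, is invariant under left translations and $T_\phi$ is an average of left translations, so $T_\phi$ restricts to a nonzero compact self-adjoint operator on $\mathcal{E}^\perp$.
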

	
	\begin{proof}
	\cite[Theorem 5.12]{FollandHarmonic}.
	\end{proof}
	
	\begin{Cor}\label{Schur}
		If $\pi : G \to \mathbb{U}(\mathcal{H})$ is a nontrivial irreducible unitary representation, then $\int \pi_{p, q} \mathrm{d} \mu = 0$ for all $(p, q) \in \{1, 2, \ldots, \dim \pi\}^2$.
	\end{Cor}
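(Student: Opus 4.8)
The plan is to derive this directly from the orthonormality half of the Peter-Weyl Theorem (Fact \ref{Peter-Weyl}). The key observation is that the integral in question is an $L^2$ inner product against the constant function: since the Haar measure $\mu$ is real and the $L^2(G)$ inner product is $\langle f, g \rangle = \int f \bar g \, \mathrm{d}\mu$, we have
$$\int \pi_{p,q} \, \mathrm{d}\mu = \int \pi_{p,q} \cdot \overline{1} \, \mathrm{d}\mu = \left\langle \pi_{p,q}, 1 \right\rangle .$$
So it suffices to show that each matrix element $\pi_{p,q}$ is orthogonal to the constant function $1$ in $L^2(G)$.

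First I would identify the constant function $1$ with a matrix element of the trivial representation $\iota : x \mapsto 1 \in \mathbb{C}$. Since $\iota$ is one-dimensional with $\dim \iota = 1$, its unique matrix element is $\iota_{1,1}(x) = \left\langle \iota(x) \cdot 1, 1 \right\rangle = 1$, and the corresponding normalizing factor is $\sqrt{\dim \iota} = 1$. Hence $1 = \sqrt{\dim \iota}\, \iota_{1,1}$ is \emph{itself} one of the orthonormal basis vectors furnished by Fact \ref{Peter-Weyl}. Next I would invoke the hypothesis that $\pi$ is nontrivial: by definition $\pi$ is not unitarily equivalent to $\iota$, so $[\pi] \neq [\iota]$ in $\hat{G}$. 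Consequently, for every $(p,q)$, the vector $\sqrt{\dim \pi}\, \pi_{p,q}$ is a basis element indexed by a pair $\big([\pi], (p,q)\big)$ distinct from the index $\big([\iota],(1,1)\big)$ of the constant function $1$. Because Fact \ref{Peter-Weyl} asserts that the full collection of scaled matrix elements is orthonormal, distinct basis vectors are orthogonal, so $\left\langle \pi_{p,q}, 1 \right\rangle = 0$, and the result follows.

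The only real subtlety — and hence the step deserving the most care — is the bookkeeping that guarantees $\pi_{p,q}$ and $1$ are genuinely different members of the orthonormal basis rather than the same function written two ways; this is precisely where nontriviality of $\pi$ (i.e.\ $[\pi]\neq[\iota]$) is used, and it is the reason the hypothesis cannot be dropped.

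As an alternative that sidesteps the basis bookkeeping entirely — and which matches the name of the statement — one could argue via Schur's Lemma. Consider the operator $P = \int_G \pi(x)\,\mathrm{d}\mu(x)$, understood as a weak integral, so that $\int \pi_{p,q}\,\mathrm{d}\mu = \left\langle P e_p, e_q\right\rangle$. Using the left- and right-invariance of Haar measure on the compact group $G$, one checks that $\pi(y) P = \int \pi(yx)\,\mathrm{d}\mu(x) = P$ and similarly $P\pi(y) = P$ for all $y \in G$, so $P$ intertwines $\pi$ with itself. Irreducibility then forces $P = cI$ for a scalar $c$, and the relation $c\,\pi(y) = cI$ together with the nontriviality of $\pi$ forces $c = 0$. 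Hence $P = 0$ and $\int \pi_{p,q}\,\mathrm{d}\mu = 0$. I would lead with the Peter-Weyl argument, since the statement is framed as a corollary of that theorem, and mention the Schur approach as a remark.
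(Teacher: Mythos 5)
Your Peter--Weyl argument is exactly the paper's proof: the paper also writes $\int \pi_{p,q}\,\mathrm{d}\mu = \left\langle \pi_{p,q}, 1 \right\rangle_{L^2(G)}$ and concludes it vanishes because the constant function $1$ is the normalized matrix element of the trivial representation, with nontriviality of $\pi$ ensuring these are distinct orthonormal basis vectors. Your version just makes the indexing bookkeeping explicit (and your Schur's-Lemma remark is a correct alternative), so this matches the paper's approach.
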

	
	\begin{proof}
		$$\int \pi_{p, q} \mathrm{d} \mu = \int \pi_{p, q} \cdot 1 \mathrm{d} \mu = \left< \pi_{p, q}, 1 \right>_{L^2(G)} = 0,$$
		since the constant function $1$ is a normalized matrix term of the trivial representation.
	\end{proof}
	
	Finally, we return to the subject of uniform distribution.
	
	\begin{Prop}[Weyl Criterion for Compact Groups]\label{Weyl Criterion}
		Let $G$ be a compact group, and $(x_n)_{n = 0}^\infty$ a sequence in $G$. Then the following are equivalent.
		\begin{enumerate}
			\item The sequence $(x_n)_{n = 0}^\infty$ is uniformly distributed in $G$.
			\item For all nontrivial irreducible unitary representations $\pi : G \to \mathbb{U}(\mathcal{H})$, and all $(p, q) \in \{1, 2, \ldots, \dim \pi\}^2$ we have
			$$\frac{1}{k} \sum_{i = 0}^{k - 1} \pi_{p, q}(x_i) \stackrel{k \to \infty}{\to} 0 .$$
			\item For all nontrivial irreducible unitary representations $\pi : G \to \mathbb{U}(\mathcal{H})$, we have
			$$\frac{1}{k} \sum_{i = 0}^{k - 1} \pi(x_i) \stackrel{k \to \infty}{\to} \mathbf{0} ,$$
			where $\mathbf{0}$ denotes the zero operator on $\mathcal{H}$, and the convergence is meant in the operator norm.
		\end{enumerate}
	\end{Prop}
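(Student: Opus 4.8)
The plan is to establish the chain of equivalences by proving $(1) \Rightarrow (2)$, then $(2) \Rightarrow (1)$, and finally $(2) \Leftrightarrow (3)$; together these give the equivalence of all three conditions. The implication $(1) \Rightarrow (2)$ is immediate from the definitions. Each matrix element $\pi_{p,q}$ is a continuous function on $G$, so if $(x_n)_{n=0}^\infty$ is uniformly distributed then $\frac{1}{k}\sum_{i=0}^{k-1}\pi_{p,q}(x_i) \to \int \pi_{p,q}\,\mathrm{d}\mu$. When $\pi$ is a nontrivial irreducible unitary representation, Corollary \ref{Schur} gives $\int \pi_{p,q}\,\mathrm{d}\mu = 0$, which is exactly $(2)$.

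The substance of the proof lies in $(2) \Rightarrow (1)$, and here I would lean on the Peter-Weyl Theorem (Fact \ref{Peter-Weyl}) together with a routine approximation argument. Fix $f \in C(G)$ and $\varepsilon > 0$. Since the span $V$ of the matrix elements $\{\pi_{p,q}\}$ over all $[\pi]\in\hat{G}$ is dense in $C(G)$ in the uniform norm, choose a finite linear combination $g \in V$ with $\norm{f - g}_\infty < \varepsilon$. Separate the terms of $g$ coming from the trivial representation from those coming from nontrivial representations. The trivial representation contributes only a constant, whose Cesàro average equals its integral; for each nontrivial term, hypothesis $(2)$ forces the Cesàro average to tend to $0$, which agrees with $\int \pi_{p,q}\,\mathrm{d}\mu$ by Corollary \ref{Schur}. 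By linearity we conclude $\frac{1}{k}\sum_{i=0}^{k-1} g(x_i) \to \int g\,\mathrm{d}\mu$. A triangle-inequality estimate then yields
$$\limsup_{k \to \infty} \left| \frac{1}{k}\sum_{i=0}^{k-1} f(x_i) - \int f\,\mathrm{d}\mu \right| \leq \norm{f-g}_\infty + 0 + \norm{f - g}_\infty < 2\varepsilon,$$
where the first and third summands bound $\left|\frac{1}{k}\sum(f-g)(x_i)\right|$ and $\left|\int(f-g)\,\mathrm{d}\mu\right|$ respectively. Letting $\varepsilon \to 0$ gives uniform distribution. The only genuine obstacle is having the density statement available, and that is precisely what Peter-Weyl supplies; the rest is bookkeeping.

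For $(2) \Leftrightarrow (3)$, the essential input is finite-dimensionality. Fix a nontrivial irreducible $\pi : G \to \mathbb{U}(\mathcal{H})$; by Fact \ref{Compact group reps} we have $\dim \mathcal{H} = n < \infty$, and we work in the orthonormal basis $\{e_i\}_{i=1}^n$ used to define the $\pi_{p,q}$. The averaged operator $A_k = \frac{1}{k}\sum_{i=0}^{k-1}\pi(x_i)$ satisfies $\langle A_k e_p, e_q\rangle = \frac{1}{k}\sum_{i=0}^{k-1}\pi_{p,q}(x_i)$, so its matrix entries are exactly the averages appearing in $(2)$. Because all norms on the finite-dimensional space of $n \times n$ matrices are equivalent, convergence $A_k \to \mathbf{0}$ in operator norm is equivalent to entrywise convergence of every $\frac{1}{k}\sum_{i=0}^{k-1}\pi_{p,q}(x_i)$ to $0$. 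This is the equivalence of $(3)$ and $(2)$, completing the argument.
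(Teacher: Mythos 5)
Your proof is correct, but it is worth noting that the paper does not actually prove this proposition at all: its ``proof'' is the single citation \cite[Chapter 4, Theorem 1.3]{KN}, so there is no internal argument to compare against. What you have written is, in substance, the classical argument that Kuipers--Niederreiter give (and that Eckmann's original work initiated): $(1)\Rightarrow(2)$ is immediate from continuity of matrix elements together with Corollary \ref{Schur}; $(2)\Rightarrow(1)$ is the Peter--Weyl density argument, approximating $f \in C(G)$ uniformly by a finite linear combination $g$ of matrix elements, splitting off the constant contribution of the trivial representation, and running the standard $\varepsilon$-triangle-inequality estimate
$$\limsup_{k \to \infty} \left| \frac{1}{k}\sum_{i=0}^{k-1} f(x_i) - \int f\,\mathrm{d}\mu \right| \leq 2 \left\| f - g \right\|_\infty ;$$
and $(2)\Leftrightarrow(3)$ follows from Fact \ref{Compact group reps} plus equivalence of norms on a finite-dimensional operator space, since $\left< A_k e_p, e_q \right> = \frac{1}{k}\sum_{i=0}^{k-1}\pi_{p,q}(x_i)$. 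All three steps are sound, and all the ingredients you invoke (Fact \ref{Peter-Weyl}, Corollary \ref{Schur}, Fact \ref{Compact group reps}) are already stated in the paper, so your proof slots in cleanly. What your version buys is self-containment, in the same spirit in which the authors chose to reproduce the proof of Lemma \ref{Group homomorphisms preserve uniform distribution} rather than only cite it; what the paper's citation buys is brevity. The one point to keep in mind is that hypothesis $(2)$ is quantified over all nontrivial irreducible representations (not one representative per equivalence class), which is exactly what your decomposition of $g$ requires, so no basis-dependence or equivalence-class issue arises.
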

	
	\begin{proof}
	\cite[Chapter 4, Theorem 1.3]{KN}.
	\end{proof}

	\begin{Lem}\label{Group homomorphisms preserve uniform distribution}
	Let $\varphi : G \twoheadrightarrow G_1$ be a continuous surjective group homomorphism, and let $(x_n)_{n = 1}^\infty$ be a uniformly distributed sequence in $G$. Then $\left( \varphi(x_n) \right)_{n = 1}^\infty$ is uniformly distributed in $G_1$.
	\end{Lem}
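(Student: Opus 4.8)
The plan is to reduce everything to the Weyl Criterion for Compact Groups (Proposition \ref{Weyl Criterion}), which characterizes uniform distribution intrinsically in terms of representations and so lets me sidestep having to identify the Haar measure on $G_1$ by hand. First I would observe that $G_1 = \varphi(G)$ is a compact group: it is a group because $\varphi$ is a homomorphism, and it is compact as the continuous image of the compact group $G$. Consequently, uniform distribution in $G_1$ with respect to its Haar measure is governed by Proposition \ref{Weyl Criterion}, and by the implication $(3) \Rightarrow (1)$ it suffices to show that for every nontrivial irreducible unitary representation $\sigma : G_1 \to \mathbb{U}(\mathcal{H})$, the averages $\frac{1}{k} \sum_{i = 0}^{k - 1} \sigma(\varphi(x_i))$ converge to the zero operator $\mathbf{0}$ in operator norm.

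The key construction is to pull $\sigma$ back along $\varphi$: set $\pi = \sigma \circ \varphi : G \to \mathbb{U}(\mathcal{H})$. Since $\varphi$ is a continuous homomorphism and $\sigma$ is a unitary representation, $\pi$ is again a continuous homomorphism, hence a unitary representation of $G$ on the same space $\mathcal{H}$, and $\pi(x_i) = \sigma(\varphi(x_i))$ for every $i$. Because $(x_n)$ is uniformly distributed in $G$, applying the implication $(1) \Rightarrow (3)$ of Proposition \ref{Weyl Criterion} to the sequence $(x_n)$ and the representation $\pi$ will yield $\frac{1}{k} \sum_{i = 0}^{k - 1} \pi(x_i) \to \mathbf{0}$, which is exactly the convergence required above, provided I first know that $\pi$ is a \emph{nontrivial irreducible} representation of $G$.

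The main obstacle, and the only place where surjectivity of $\varphi$ is genuinely used, is precisely verifying that $\pi = \sigma \circ \varphi$ inherits irreducibility and nontriviality from $\sigma$. For irreducibility, I would note that a closed subspace $\mathcal{M} \subseteq \mathcal{H}$ is invariant under $\pi$ if and only if $\sigma(\varphi(g)) \mathcal{M} \subseteq \mathcal{M}$ for all $g \in G$; since $\varphi$ is surjective, $\{ \varphi(g) : g \in G \} = G_1$, so this condition is equivalent to $\mathcal{M}$ being $\sigma$-invariant, and irreducibility of $\sigma$ forces $\mathcal{M} \in \{ \{0\}, \mathcal{H} \}$. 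For nontriviality, if $\pi$ were the trivial representation then $\sigma(\varphi(g)) = 1$ for all $g \in G$, and surjectivity would give $\sigma(h) = 1$ for all $h \in G_1$, contradicting the assumption that $\sigma$ is nontrivial. With these two facts established the argument closes; I expect the bookkeeping around invariant subspaces, together with the essential appeal to surjectivity, to be the substantive content, while the two passages through the Weyl Criterion are formal.
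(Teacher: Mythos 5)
Your proposal is correct and follows essentially the same route as the paper: pull back each nontrivial irreducible unitary representation of $G_1$ along $\varphi$, use surjectivity to see the pullback is again nontrivial (and irreducible), and pass through Proposition \ref{Weyl Criterion} in both directions. The only difference is that you spell out the verification of irreducibility and nontriviality of $\sigma \circ \varphi$, which the paper compresses into the single clause ``since $\varphi$ is surjective.''
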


	\begin{proof}
	Let $\pi : G_1 \to \mathbb{U}(\mathcal{H})$ be a nontrivial unitary representation of $G_1$. Then $\pi \circ \varphi$ is a nontrivial unitary representation of $G$, since $\varphi$ is surjective. Therefore
	\begin{align*}
	\frac{1}{k} \sum_{i = 0}^{k - 1} \pi(\varphi(x_i))	& = \frac{1}{k} \sum_{i = 0}^{k - 1} (\pi \circ \varphi)(x_i)	& \stackrel{k \to \infty}{\to} \mathbf{0} .
	\end{align*}
	We can thus apply Proposition \ref{Weyl Criterion}.
	\end{proof}

	\begin{Rmk}
	Lemma \ref{Group homomorphisms preserve uniform distribution} is listed as Theorem 1.6 in Chapter 4 of \cite{KN}. We include the proof here for the sake of self-containment. Lemma \ref{Group homomorphisms preserve uniform distribution} will be important when proving Theorem \ref{Shifts and uniform distribution}.
	\end{Rmk}
	
	We will be interested specifically in the case where the Weyl Criterion only requires us to "check" countably many matrix element functions, or equivalently where $\hat{G}$ is countable. It turns out that this is tantamount to a metrizability assumption.
	
	\begin{Lem}
		Let $G$ be a compact topological group. Then the following are equivalent.
		\begin{enumerate}
			\item $C(G)$ is separable as a vector space with the uniform norm.
			\item $L^2(G)$ is separable as a Hilbert space.
			\item The family $\hat{G}$ is countable.
			\item $G$ is metrizable.
		\end{enumerate}
	\end{Lem}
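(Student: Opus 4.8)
The plan is to establish the four equivalences by combining a purely topological argument for $(1) \Leftrightarrow (4)$ with a harmonic-analytic argument built on the Peter-Weyl Theorem (Fact \ref{Peter-Weyl}) for $(2) \Leftrightarrow (3)$, and then to bridge the two pairs with the implications $(1) \Rightarrow (2)$ and $(3) \Rightarrow (1)$. Chaining $(1) \Rightarrow (2) \Rightarrow (3) \Rightarrow (1)$ together with the standalone equivalence $(1) \Leftrightarrow (4)$ will yield the full result.

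First I would prove the topological equivalence $(1) \Leftrightarrow (4)$, which holds for any compact Hausdorff space and uses nothing about the group structure. For $(4) \Rightarrow (1)$: a compact metric space is separable, so I fix a countable dense set $\{x_n\}$ and consider the functions $g_n = d(\cdot, x_n)$; the unital algebra they generate separates points, so by Stone-Weierstrass it is uniformly dense in $C(G)$, and Gaussian-rational polynomial combinations of the $g_n$ then form a countable uniformly dense subset. For $(1) \Rightarrow (4)$: let $\{f_n\}$ be a countable uniformly dense subset of $C(G)$. Since continuous functions separate points of a compact Hausdorff space (Urysohn) and the $\{f_n\}$ approximate them uniformly, the family $\{f_n\}$ itself separates points. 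Replacing each $f_n$ by $f_n/(1 + \|f_n\|_\infty)$ I may assume $\|f_n\|_\infty \le 1$, and then $d(x, y) = \sum_n 2^{-n} |f_n(x) - f_n(y)|$ defines a metric inducing a topology no finer than the original. The identity map from $G$ with its original topology to $(G, d)$ is then a continuous bijection from a compact space to a Hausdorff space, hence a homeomorphism, so $G$ is metrizable.

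Next I would handle the harmonic-analytic equivalence $(2) \Leftrightarrow (3)$. By the Peter-Weyl Theorem, the family $\{\sqrt{\dim \pi}\, \pi_{p,q} : [\pi] \in \hat{G},\ 1 \le p, q \le \dim \pi\}$ is an orthonormal basis of $L^2(G)$, and by Fact \ref{Compact group reps} each $\dim \pi$ is finite. If $\hat{G}$ is countable, this basis is a countable union of finite sets, hence countable, and a Hilbert space with a countable orthonormal basis is separable, giving $(3) \Rightarrow (2)$. Conversely, any orthonormal family in a separable Hilbert space must be countable (distinct unit vectors in an orthonormal set lie at distance $\sqrt 2$, and an uncountable $\sqrt 2$-separated set contradicts separability), so $(2)$ forces the Peter-Weyl basis to be countable; since each $[\pi]$ contributes at least one basis element, $\hat{G}$ must be countable, giving $(2) \Rightarrow (3)$.

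Finally I would supply the two bridges. For $(1) \Rightarrow (2)$: since $\mu$ is a probability measure, $\|f\|_{L^2} \le \|f\|_\infty$, so uniform convergence implies $L^2$ convergence; as $C(G)$ is $L^2$-dense, any countable uniformly dense subset of $C(G)$ is $L^2$-dense, so $L^2(G)$ is separable. For $(3) \Rightarrow (1)$: when $\hat{G}$ is countable the set of matrix elements $\{\pi_{p,q}\}$ is countable, and by Peter-Weyl its linear span is uniformly dense in $C(G)$, so Gaussian-rational linear combinations of the $\pi_{p,q}$ form a countable uniformly dense subset, giving separability of $C(G)$. I expect the main subtlety to be the upgrade, in $(1) \Rightarrow (4)$, of the continuous bijection $(G, \mathrm{orig}) \to (G, d)$ to a homeomorphism, which is precisely where compactness is essential; everything else reduces to Stone-Weierstrass, the standard density of $C(G)$ in $L^2(G)$, and the cardinality bookkeeping supplied by Peter-Weyl.
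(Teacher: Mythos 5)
Your proposal is correct, and most of it coincides with the paper's own argument: the implication (4)$\Rightarrow$(1) via the distance functions $\rho(\cdot, x_j)$ and Stone--Weierstrass, the equivalence (2)$\iff$(3) via the Peter--Weyl orthonormal basis together with the fact that orthonormal sets in a separable Hilbert space are countable, the bridge (1)$\Rightarrow$(2) from $\norm{f}_{L^2} \leq \norm{f}_\infty$ and density of $C(G)$ in $L^2(G)$, and the return trip to (1) using uniform density of the matrix elements. The genuine divergence is in how metrizability is recovered. You prove (1)$\Rightarrow$(4) directly and purely topologically: normalize a countable uniformly dense family $\{f_n\}$, check via Urysohn that it separates points, set $d(x, y) = \sum_n 2^{-n} \left| f_n(x) - f_n(y) \right|$, and upgrade the continuous identity bijection from $(G, \mathrm{orig})$ to $(G, d)$ to a homeomorphism using compactness of the domain and Hausdorffness of the metric target. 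The paper instead proves (2)$\Rightarrow$(4) using the group structure: the left regular representation $\lambda : G \to \mathbb{U}\left( L^2(G) \right)$ is faithful and continuous, hence embeds $G$ as a closed subgroup of $\mathbb{U}\left( L^2(G) \right)$, which is metrizable in the strong operator topology when $L^2(G)$ is separable. Your route is more elementary and strictly more general --- it establishes (1)$\iff$(4) for an arbitrary compact Hausdorff space with no group structure at all --- whereas the paper's route is shorter given the representation-theoretic machinery already in place, at the cost of invoking faithfulness of the regular representation and metrizability of the unitary group of a separable Hilbert space. Both are complete: your cycle (1)$\Rightarrow$(2)$\Rightarrow$(3)$\Rightarrow$(1) together with (1)$\iff$(4) covers all four conditions, just as the paper's decomposition does.
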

	
	\begin{proof}
		(1)$\Rightarrow$(2) If $C(G)$ admits a countable set with dense span in $C(G)$, then that same set has dense span in $L^2(G)$. Thus $L^2(G)$ is separable.
		
		(2)$\Rightarrow$(1) If $L^2(G)$ is separable, then every orthonormal basis of $L^2(G)$ is countable, including the family of matrix terms. But these matrix terms have dense span in $C(G)$, so $C(G)$ is separable.
		
		(2)$\iff$(3) If there are only countably many unitary equivalence classes of irreducible unitary representations of $G$ (which are all necessarily finite-dimensional), then $L^2(G)$ is separable by the Peter-Weyl Theorem. Conversely, if $\hat{G}$ is uncountable, then Peter-Weyl tells us that $L^2(G)$ admits an uncountable orthonormal basis, meaning that $L^2(G)$ is not separable.
		
		(4)$\Rightarrow$(1) If $G = (G, \rho)$ is compact metrizable, then $G$ is separable, admitting a countable dense subset $\{x_j\}_{j \in I}$ Let $f_j = \rho(\cdot, x_j)$. Then $\{ f_j \}_{j \in J}$ separates points, since if $f_{j}(x) = f_j(y)$ for all $j \in J$, then $\rho(x, x_j) = \rho(y, x_j)$ for all $j \in I$. For each $n \in \mathbb{N}$ exists $j_n \in I$ such that $\rho(x, x_{j_n}) \leq \frac{1}{2n}$, since $\{x_j\}_{j \in J}$ is dense in $G$. Thus $\rho(x, y) \leq \rho(x, x_j) + \rho(x_j, y) \leq \frac{1}{n}$. Thus $x = y$. By Stone-Weierstrass, this implies that
		$$\overline{\operatorname{span}}\left\{ \prod_{n = 1}^N f_{j_n} : j_1, \ldots, j_N \in J, N \in \mathbb{N} \cup \{0\} \right\} = C(G),$$
		where the empty product is the constant function $1$.
		
		(2)$\Rightarrow$(4) Let $\lambda : G \to \mathbb{U} \left( L^2(G) \right)$ be the left regular representation
		$$\lambda(x) : (t \mapsto f(t)) \mapsto \left(t \mapsto f\left( x^{-1} t \right) \right) .$$
		Then $\lambda$ is a faithful representation of $G$ on $\mathcal{H}$, meaning $\lambda$ is an embedding of $G$ into $\mathbb{U} \left( L^2(G) \right)$. Therefore $\lambda(G) \cong G$ is a closed subgroup of $\mathbb{U}\left( L^2(G) \right)$. But $\mathbb{U}(\mathcal{H})$ is metrizable when $\mathcal{H}$ is separable, so $G$ is therefore metrizable.
	\end{proof}
	
	When $G$ is compact abelian, the family $\hat{G}$ is exactly the family $\operatorname{Hom}\left( G, \mathbb{S}^1 \right)$ of continuous group homomorphisms $G \to \mathbb{S}^1$, where $\mathbb{S}^1 = \{z \in \mathbb{C} : z \overline{z} = 1\}$. Then $\hat{G}$ has the structure of a locally compact abelian topological group under pointwise multiplication \cite[1.2.6(d)]{RudinFourier}.
	
	The following results describes a condition under which certain sequences will be almost surely uniformly distributed. We remark that the result is a direct generalization of Theorem 4.1 from Chapter 1 of \cite{KN}, which proves the result (albeit in different language) for the particular case where $G = \mathbb{R} / \mathbb{Z}$, and our method of proof is essentially the same, except expressed in the language of harmonic analysis.
	
	\begin{Thm}\label{Difference Property Uniformly Distributed}
		Let $G$ be a compact abelian metrizable group, and let $\left( \Phi_n \right)_{n \in \mathbb{N}}$ be a sequence of distinct continuous group endomorphisms of $G$ such that $\Phi_n - \Phi_m$ is a surjection of $G$ onto itself for all $n \neq m$. Then for almost all $x \in G$, the sequence $( \Phi_n x )_{n = 0}^\infty$ is uniformly distributed.
	\end{Thm}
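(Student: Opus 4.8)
The plan is to verify uniform distribution through the Weyl Criterion (Proposition \ref{Weyl Criterion}). Since $G$ is compact abelian, every irreducible unitary representation is one-dimensional, i.e. a character $\chi \in \hat{G} = \operatorname{Hom}(G, \mathbb{S}^1)$, and its single matrix element is $\chi$ itself. Thus it suffices to show that for each fixed nontrivial $\chi \in \hat{G}$ the averages
$$S_k^\chi(x) := \frac{1}{k}\sum_{i=0}^{k-1}\chi(\Phi_i x)$$
converge to $0$ for almost every $x$. I would fix one such $\chi$, prove a.e.\ convergence for it, and then assemble the full result by intersecting over all nontrivial characters.

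The engine is a second-moment (variance) computation. Expanding
$$\int_G \bigl|S_k^\chi\bigr|^2 \, \mathrm{d}\mu = \frac{1}{k^2}\sum_{i, j = 0}^{k-1}\int_G \chi(\Phi_i x)\,\overline{\chi(\Phi_j x)}\,\mathrm{d}\mu(x),$$
and using $\chi(\Phi_i x)\overline{\chi(\Phi_j x)} = \chi\bigl((\Phi_i - \Phi_j)x\bigr)$, the key observation is that for $i \neq j$ the map $\Phi_i - \Phi_j$ is surjective by hypothesis, so $\chi \circ (\Phi_i - \Phi_j)$ is again a \emph{nontrivial} character of $G$, whence its Haar integral vanishes by Corollary \ref{Schur}. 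Only the $k$ diagonal terms survive, each contributing $1$, so that $\int_G |S_k^\chi|^2\,\mathrm{d}\mu = 1/k$.

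The step I expect to be the main obstacle is upgrading this mean-square decay to almost-everywhere convergence: the bound $1/k$ is not summable in $k$, so a naive Borel--Cantelli argument fails. The standard remedy, which I would follow, is to pass first to the lacunary subsequence $k = m^2$. There $\sum_{m} \int_G |S_{m^2}^\chi|^2 \,\mathrm{d}\mu = \sum_m 1/m^2 < \infty$, so by Tonelli $\sum_m |S_{m^2}^\chi(x)|^2 < \infty$ and hence $S_{m^2}^\chi(x) \to 0$ for a.e.\ $x$. The remaining work is to control the oscillation of $S_k^\chi$ between consecutive squares: writing the unnormalized sums $T_k^\chi(x) = \sum_{i=0}^{k-1}\chi(\Phi_i x)$, for $m^2 \le k < (m+1)^2$ the difference $S_k^\chi - S_{m^2}^\chi$ splits into a contribution from rescaling the denominator and a contribution from the at most $2m+1$ newly added summands, each of modulus $1$; both are $O(1/m)$ uniformly in $k$. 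Hence $\sup_{m^2 \le k < (m+1)^2}|S_k^\chi(x) - S_{m^2}^\chi(x)| \to 0$, and therefore $S_k^\chi(x) \to 0$ for a.e.\ $x$.

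Finally, because $G$ is metrizable, $\hat{G}$ is countable (by the characterization of metrizability established above), so the countable intersection over all nontrivial $\chi \in \hat{G}$ of the full-measure sets on which $S_k^\chi \to 0$ is itself of full measure. On this intersection the Weyl Criterion applies, yielding uniform distribution of $(\Phi_n x)_{n=0}^\infty$ for almost every $x \in G$.
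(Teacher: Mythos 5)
Your proposal is correct and follows essentially the same route as the paper's proof: the same variance computation $\int |S_k^\chi|^2 \, \mathrm{d}\mu = 1/k$ via surjectivity of $\Phi_i - \Phi_j$ and vanishing of nontrivial character integrals, the same passage to the subsequence $k = m^2$ (the paper invokes Fatou where you invoke Tonelli, an immaterial difference), the same $O(1/m)$ control of the oscillation between consecutive squares, and the same countable intersection over $\hat{G}$ enabled by metrizability. There is nothing to add; the two arguments coincide.
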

	
	\begin{proof}
		Fix some nontrivial irreducible unitary representation $\gamma \in \hat{G}$. Set
		$$S_\gamma(k, x) = \frac{1}{k} \sum_{i = 0}^{k - 1} \gamma(\Phi_i x) .$$
		Then
		\begin{align*}
			|S_\gamma(k, x)|^2	& = \frac{1}{k^2} \sum_{i, j = 0}^{k - 1} \gamma(\Phi_i x) \overline{\gamma(\Phi_j x)} \\
			& = \frac{1}{k^2} \sum_{i , j = 0}^{k - 1} \gamma((\Phi_i - \Phi_j) x) \\
			\Rightarrow \int |S_\gamma (k, x)|^2 \mathrm{d} \mu(x)	& = \frac{1}{k^2} \sum_{i, j = 0}^{k - 1} \int \gamma((\Phi_i - \Phi_j) x) \mathrm{d} \mu(x) \\
			& = \frac{1}{k} .
		\end{align*}
		This cancellation is possible because if $i \neq j$, then $\Phi_i - \Phi_j$ is surjective, meaning that $\gamma \circ (\Phi_i - \Phi_j)$ is a nontrivial character on $G$. Therefore $\int \gamma((\Phi_i - \Phi_j) x) \mathrm{d} \mu(x) = 0$ for $i \neq j$, meaning only the terms of $i = j$ contribute to the sum.
		
		This tells us that $\sum_{K = 1}^\infty \int \left|S_\gamma\left(K^2, x\right)\right|^2 \mathrm{d} \mu(x) = \sum_{K = 1}^\infty K^{-2} < \infty$, so by Fatou's Lemma we know that $\int \sum_{K = 1}^\infty \left| S_\gamma \left( K^2, x \right) \right|^2 \mathrm{d}  \mu(x) < \infty$. In particular, this tells us that $ \sum_{K = 1}^\infty \left| S_\gamma \left( K^2, x \right) \right|^2 < \infty$ for almost all $x \in G$, and so for almost all $x \in G$, we have $S_\gamma \left( K^2, x \right) \stackrel{K \to \infty}{\to} 0$.
		
		Let $x \in G$ such that $S_\gamma \left( K^2, x \right) \stackrel{K \to \infty}{\to} 0$. We want to show  that $S_\gamma \left( k, x \right) \stackrel{k \to \infty}{\to} 0$. For any $k \in \mathbb{N}$, we have
		$$\lfloor \sqrt{k} \rfloor^2 \leq k \leq \left( \lfloor \sqrt{k} \rfloor + 1 \right)^2 .$$
		So
		\begin{align*}
			\left| S_\gamma (k, x) \right|	& = \left| \frac{1}{k} \sum_{i = 0}^{k - 1} \gamma(\Phi_i x) \right| \\
			& \leq \left| \frac{1}{k} \sum_{i = 0}^{\lfloor \sqrt{k} \rfloor^2 - 1} \gamma(\Phi_i x) \right| + \left| \frac{1}{k} \sum_{j = \lfloor \sqrt{k} \rfloor^2}^{k - 1} \gamma(\Phi_j x) \right| \\
			& \leq \left| \frac{1}{\lfloor \sqrt{k} \rfloor^2} \sum_{i = 0}^{\lfloor \sqrt{k} \rfloor^2 - 1} \gamma(\Phi_i x) \right| + \frac{2 \lfloor \sqrt{k} \rfloor}{k} \\
			& = \left| S_\gamma \left( \lfloor \sqrt{k} \rfloor^2 , x \right) \right| + \frac{2 \lfloor \sqrt{k} \rfloor}{k} \\
			& \leq \left| S_\gamma \left( \lfloor \sqrt{k} \rfloor^2 , x \right) \right| + \frac{2}{\sqrt{k}} \\
			& \stackrel{k \to \infty}{\to} 0 .
		\end{align*}
		
		Let $E_\gamma = \left\{ x \in G : S_\gamma (k, x) \stackrel{k \to \infty}{\to} 0 \right\}$, and let $E = \bigcap_{\gamma \in \hat{G} \setminus \{1\}} E_\gamma.$ Since $\hat{G}$ is countable, we know $\mu(E) = 1$, proving the theorem.
	\end{proof}

	\section{The Difference Property}\label{Difference Property}
	
	We will be interested especially in the situation where the sequence $\left( \Phi_n \right)_{n \in \mathbb{N}}$ is generated by a sequence $(T_n)_{n \in \mathbb{N}}$ of continuous group endomorphisms. Motivated by Theorem \ref{Difference Property Uniformly Distributed}, we introduce the following definition.
	
	\begin{Def}
		Let $G$ be a compact abelian metrizable group, and let $(T_n)_{n = 1}^\infty$ be a sequence of continuous group endomorphisms of $G$. Set $\Phi_n = T_n T_{n - 1} \cdots T_1$ for $n \in \mathbb{N}$. We say that the sequence $(T_n)_{n = 1}^\infty$ has \emph{the Difference Property} if $\Phi_n - \Phi_m$ is surjective for all $n, m \in \mathbb{N}, n \neq m$.
	\end{Def}
	
	It is not obvious that the Difference Property places any particular restrictions on the individual $\Phi_n$ themselves. We have a special interest in when the maps $\{T_n\}_{n = 1}^\infty$ are surjective -or perhaps more interestingly, when they are \emph{not} surjective- because a continuous group endomorphism on a compact group is measure-preserving if and only if it is surjective.
	
	\begin{Prop}
		Let $T : G \to G$ be a continuous group endomorphism of a compact group $G$. Then $T$ is surjective if and only if $T$ is measure-preserving.
	\end{Prop}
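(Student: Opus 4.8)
The plan is to prove the two implications separately, and I expect the forward direction (\emph{surjective} $\Rightarrow$ \emph{measure-preserving}) to carry the real content, since it is there that surjectivity is used in an essential way.

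For the direction that surjectivity implies measure-preservation, I would show that the pushforward $T_*\mu$, defined by $(T_*\mu)(A) = \mu(T^{-1}(A))$, coincides with $\mu$ by the uniqueness of Haar measure. The key point is left-invariance of $T_*\mu$: given $g \in G$, surjectivity lets me write $g = T(h)$ for some $h \in G$, and then a direct computation using that $T$ is a homomorphism gives $T^{-1}(gA) = h\, T^{-1}(A)$; left-invariance of $\mu$ then yields $(T_*\mu)(gA) = (T_*\mu)(A)$. Since $T_*\mu$ is a left-invariant Borel probability measure, it must equal $\mu$, which is precisely measure-preservation. To sidestep any regularity concern about the pushforward, I would alternatively phrase this through the Riesz functional: set $I(f) = \int f\circ T\, d\mu$ for $f \in C(G)$, observe that $I$ is a positive normalized linear functional, and verify $I(L_g f) = I(f)$ by the substitution $y = h^{-1}x$ (again with $g = T(h)$ and $T(h^{-1})T(x) = T(h^{-1}x)$). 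Uniqueness of the Haar functional then gives $\int f\circ T\, d\mu = \int f\, d\mu$ for all $f \in C(G)$, i.e. $T_*\mu = \mu$.

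For the converse I would prove the contrapositive. If $T$ is not surjective, then $H := T(G)$ is a proper closed (hence Borel) subgroup of $G$, being the continuous homomorphic image of the compact group $G$. Its complement $G \setminus H$ is a nonempty open set, and Haar measure assigns positive mass to every nonempty open set, so $\mu(H) < 1$. On the other hand $T^{-1}(H) = T^{-1}(T(G)) = G$, whence $\mu(T^{-1}(H)) = 1 \neq \mu(H)$, so $T$ fails to be measure-preserving.

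The main obstacle is the bookkeeping in the surjective direction: pinning down that surjectivity is \emph{exactly} what makes the pushforward translation-invariant, since the identity $T^{-1}(gA) = h\, T^{-1}(A)$ breaks down without a preimage $h$ of $g$, and then correctly invoking uniqueness of Haar measure (handled cleanly by the functional formulation). The converse, by contrast, is a short measure-theoretic argument once one recalls that nonempty open sets have positive Haar measure.
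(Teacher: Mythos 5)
Your proof is correct. The forward direction is essentially the paper's own argument: both define the pushforward $\nu(E) = \mu\left(T^{-1}E\right)$, use a preimage $h$ of $g$ under $T$ to obtain $T^{-1}(gE) = h\, T^{-1}(E)$ and hence left-invariance, and conclude $\nu = \mu$ by uniqueness of the Haar probability measure (your Riesz-functional variant is just a repackaging of this). In the converse, however, you take a genuinely different route. The paper also passes to the closed set $TG$, but then splits into cases: if $\mu(TG) \neq 1$, the set $TG$ itself witnesses the failure of measure-preservation, since $\mu\left(T^{-1}(TG)\right) = \mu(G) = 1$; if instead $\mu(TG) = 1$, the paper picks $x \notin TG$ and shows the coset $x\,TG$ has \emph{empty} preimage (if $Ty = x\,Tz$ then $x = T\left(yz^{-1}\right) \in TG$, a contradiction), so $\mu\left(T^{-1}(x\,TG)\right) = 0 \neq 1 = \mu(x\,TG)$. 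You avoid this case split entirely by invoking the positivity of Haar measure on nonempty open sets: $G \setminus TG$ is nonempty open, so $\mu(TG) < 1$, while $T^{-1}(TG) = G$ has measure $1$. Your route is shorter and arguably cleaner, but it leans on that positivity property (which in a compact group one proves by covering $G$ with finitely many translates of the open set); the paper's coset argument buys independence from it, using only translation invariance and the closedness of $TG$.
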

	
	\begin{proof}
		$(\Rightarrow)$ Suppose $T$ is surjective. Define a Borel measure $\nu$ on $G$ by $\nu(E) = \mu \left( T^{-1} E \right)$. Let $x \in G$, and choose $y \in G$ such that $x = T y$ Then the measure $\nu$ satisfies
		\begin{align*}
			\nu \left( x E \right)	& = \mu \left( T^{-1} \left( x E \right) \right) \\
			& = \mu \left( T^{-1} \left( (Ty) E \right) \right) \\
			& = \mu \left( y T^{-1} E \right) \\
			& = \mu \left( T^{-1} E \right) \\
			& = \nu(E),
		\end{align*}
		meaning that $\nu$ is left-invariant. It also satisfies $\nu(G) = \mu \left( T^{-1} G \right) = \mu(G) = 1$, so $\nu$ is a probability measure. Therefore $\nu$ is a Haar probability measure on $G$, but by the uniqueness of the Haar measure, this implies that $\nu = \mu$.
		
		$(\Leftarrow)$ If $T$ is \emph{not} surjective, then there exists $x \in G \setminus TG$. Since $G$ is compact and Hausdorff, the map $T$ is necessarily closed, so $TG$ is closed in $G$, and a fortiori is measurable. If $\mu(TG) \neq 1$, then we know that $\mu \left( T^{-1} (TG) \right) = \mu(G) \neq \mu(TG)$, so consider the case where $\mu(TG) = 1$. We claim that $T^{-1} (x TG) = \emptyset$, since if $Ty = x Tz$ for some $y, z \in G$, then
		\begin{align*}
			T \left( y z^{-1} \right)	& = (Ty) \left( Tz^{-1} \right) \\
			& = x Tz (Tz)^{-1} \\
			& = x,
		\end{align*}
		a contradiction. Thus $\mu \left( T^{-1} \left( x TG \right) \right) = 0 \neq 1 = \mu \left( TG \right) = \mu(x TG)$.
	\end{proof}
	
	We note that our argument for the forward direction can be found in \cite[\S1.2]{Walters}. We include it here for the sake of a self-contained treatment.
	
	The possibility that in the non-autonomous case, rich results like Theorem \ref{Difference Property Uniformly Distributed} could be achieved where a nontrivial number of the $\{T_n\}_{n = 1}^\infty$ are not even measure-preserving intrigues us. The following results show that under certain conditions, the Difference Property imposes surjectivity on the $T_n$ individually.
	
	\begin{Prop}
		Let $G$ be a compact abelian metrizable group, and let $(T_n)_{n = 1}^\infty$ be a sequence of continuous group endomorphisms on $G$ with the Difference Property such that $T_n T_m = T_m T_n$ for all $m, n \in \mathbb{N}$. Then each $T_n$ is surjective.
	\end{Prop}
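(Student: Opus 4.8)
The plan is to work inside the ring $\operatorname{End}(G)$ of continuous group endomorphisms of $G$, where addition is pointwise and multiplication is composition. Because $G$ is abelian, the pointwise difference $\Phi_n - \Phi_m$ is again a genuine endomorphism, and composition distributes over addition on both sides (the left-distributivity $\Phi_m \circ (A - B) = \Phi_m \circ A - \Phi_m \circ B$ uses precisely that $\Phi_m$ is additive). The central observation is that the commutativity hypothesis lets me factor $\Phi_m$ out of the difference $\Phi_n - \Phi_m$ as a \emph{left} factor, and that surjectivity of a composite forces surjectivity of its outer map.

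Concretely, I would fix $m \geq 1$ and take $n = m+1$. Writing $\Phi_{m+1} = T_{m+1}\Phi_m$ and using that $T_{m+1}$ commutes with $\Phi_m = T_m \cdots T_1$ (a product of maps each commuting with $T_{m+1}$), I get $\Phi_{m+1} = \Phi_m T_{m+1}$, hence
$$\Phi_{m+1} - \Phi_m = \Phi_m T_{m+1} - \Phi_m = \Phi_m \circ (T_{m+1} - \operatorname{id}).$$
The image of this composite is contained in $\operatorname{im}(\Phi_m)$. Since the Difference Property guarantees that $\Phi_{m+1} - \Phi_m$ is surjective, I conclude $\operatorname{im}(\Phi_m) = G$, i.e. $\Phi_m$ is surjective. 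As $m$ was arbitrary, every $\Phi_m$ with $m \geq 1$ is surjective.

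It then remains to descend from the $\Phi_n$ to the individual $T_n$. For each $n \geq 1$ we have $\Phi_n = T_n \circ \Phi_{n-1}$ (with $\Phi_0 = \operatorname{id}$), so $\operatorname{im}(\Phi_n) \subseteq \operatorname{im}(T_n)$. Since $\Phi_n$ is surjective by the previous step, $\operatorname{im}(T_n) = G$, and thus $T_n$ is surjective, as desired.

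I expect the main point—more conceptual than obstacle—to be the use of commutativity precisely to relocate $\Phi_m$ to the \emph{left} of the difference. Without commutativity the natural grouping $\Phi_n - \Phi_m = (\Psi_{m+1,n} - \operatorname{id}) \circ \Phi_m$ places $\Phi_m$ on the right, and surjectivity of the composite would only control the outer factor $\Psi_{m+1,n} - \operatorname{id}$, not $\Phi_m$ itself; so the hypothesis $T_n T_m = T_m T_n$ is exactly what makes the argument close. The only routine care needed is to verify that all the maps in play remain honest continuous group homomorphisms, so that the ring-theoretic distributivity laws and the image containments apply verbatim.
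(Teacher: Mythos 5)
Your proof is correct and is essentially identical to the paper's: both factor $\Phi_{n+1}-\Phi_n$ as $\Phi_n\circ(T_{n+1}-\operatorname{id})$ using the commutativity hypothesis, deduce surjectivity of $\Phi_n$ from surjectivity of the composite, and then obtain surjectivity of $T_n$ from $\Phi_n = T_n\Phi_{n-1}$. Your added remarks on where additivity and commutativity enter are accurate glosses on the same argument.
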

	
	\begin{proof}
		For each $n \in \mathbb{N}$, we have that
		\begin{align*}
			\Phi_{n + 1} - \Phi_n	& = T_{n + 1} \Phi_n - \Phi_n \\
			& = (T_{n + 1} - 1) \Phi_n \\
			& = \Phi_n (T_{n + 1} - 1)
		\end{align*}
		is surjective, meaning that $\Phi_n$ is surjective. Since $\Phi_n = T_n \Phi_{n - 1}$ (where we let $\Phi_0 = \operatorname{id}_G$), we can conclude that $T_n$ is surjective.
	\end{proof}
	
	This result applies in the autonomous setting, where $T_n = T$ for all $n \in \mathbb{N}$, but not in general. Even in the case of $\mathbb{R}^2 / \mathbb{Z}^2$, endomorphisms need not commute. The next example addresses the situation of finite-dimensional tori.
	
	\begin{Fact}\label{Characterizing compact abelian Lie groups}
		If $G$ is a compact connected abelian Lie group, then $G = \mathbb{R}^h / \mathbb{Z}^h$ for some $h \in \mathbb{N}$. In general, if $G$ is a compact abelian Lie group, then $G = G_0 \oplus B$, where $G_0$ is the identity component of $G$, and $B \cong A = G / G_0$.
	\end{Fact}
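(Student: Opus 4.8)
The plan is to handle the two assertions separately, treating the connected case first via the exponential map and then bootstrapping to the general case via a splitting argument.

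\emph{The connected case.} Suppose $G$ is a compact connected abelian Lie group with Lie algebra $\mathfrak{g}$. First I would observe that because $G$ is abelian, the bracket on $\mathfrak{g}$ vanishes, so $\mathfrak{g}$ is just the additive group $(\mathbb{R}^h, +)$ with $h = \dim G$, and the Baker--Campbell--Hausdorff formula degenerates to show that the exponential map $\exp : \mathfrak{g} \to G$ is a group homomorphism. Next I would use that $\exp$ is a local diffeomorphism at $0$, so its image contains a neighborhood of the identity, hence an open subgroup, hence (by connectedness) all of $G$; thus $\exp$ is a surjective homomorphism $\mathbb{R}^h \twoheadrightarrow G$. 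Its kernel $\Lambda$ is a discrete subgroup of $\mathbb{R}^h$ (since $0$ is isolated in $\Lambda$), and since $G \cong \mathbb{R}^h / \Lambda$ is compact, $\Lambda$ must span $\mathbb{R}^h$ and hence be a full-rank lattice, i.e. $\Lambda \cong \mathbb{Z}^h$ after a linear change of coordinates. This yields $G \cong \mathbb{R}^h / \mathbb{Z}^h$.

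\emph{The general case.} For arbitrary compact abelian Lie $G$, I would first note that $G_0$ is an open normal subgroup, so the finitely many cosets (finiteness coming from compactness) exhibit $A = G/G_0$ as a finite abelian group, giving a short exact sequence $0 \to G_0 \to G \to A \to 0$. The key step is to split this sequence. By the connected case, $G_0 \cong \mathbb{R}^h/\mathbb{Z}^h$ is divisible, and a divisible abelian group is injective in the category of abelian groups (Baer's criterion); hence the inclusion $\iota : G_0 \hookrightarrow G$ admits an algebraic retraction $r : G \to G_0$ with $r \circ \iota = \operatorname{id}_{G_0}$. Setting $B = \ker r$, the fact that $G$ is abelian gives the internal direct sum $G = G_0 \oplus B$ with $B \cong G/G_0 = A$. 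To see that this algebraic splitting is one of Lie groups, I would check that $r$ is continuous: since $r|_{G_0} = \operatorname{id}$ and $G$ is the disjoint union of the finitely many open cosets $g G_0$, on each coset $r$ is the continuous map $g x \mapsto r(g) x$; thus $r$ is continuous, $B = \ker r$ is closed, and being isomorphic to the finite group $A$ it is discrete.

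The main obstacle I anticipate is the splitting step: one must know that the torus $G_0$ is an injective $\mathbb{Z}$-module so that the extension by the finite group $A$ splits, which is exactly where the divisibility furnished by the connected case is used. Everything else---surjectivity of $\exp$, full rank of the kernel lattice, and the automatic continuity of the retraction---follows from standard compactness and connectedness arguments.
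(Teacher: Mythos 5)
Your proof is correct, but it reaches the splitting by a different mechanism than the paper does. For the connected case the paper simply cites the classification (\S 4.2 of Procesi), whereas you supply the standard proof via the exponential map; that part is fine and self-contained. For the general case, both arguments ultimately rest on the divisibility of the torus $G_0$, but they deploy it differently: the paper works by hand, writing $A = G/G_0$ as a direct sum of cyclic groups $\left< a_j \right>_{\ell_j}$, lifting each generator $a_j$ to some $y_j \in G$, and then using divisibility of $G_0$ to solve $\ell_j y_j' = \ell_j y_j$ and correct the lift to $b_j = y_j - y_j'$ of exact order $\ell_j$; the subgroup $B$ generated by the $b_j$ is then an explicit finite complement mapping isomorphically onto $A$. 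You instead invoke the abstract fact (via Baer's criterion) that a divisible abelian group is an injective $\mathbb{Z}$-module, obtain a retraction $r : G \to G_0$, and take $B = \ker r$. Your route is shorter and more conceptual, and it makes clear that the splitting works for an extension by \emph{any} abelian quotient, not just a finite one; the paper's route is more elementary, avoiding homological algebra entirely, and produces concrete generators for the complement. Your explicit verification that $r$ is continuous on each open coset is a nice touch, though strictly speaking it is not needed: once $G = G_0 \oplus B$ holds algebraically with $B$ finite, the addition map $G_0 \times B \to G$ is a continuous bijective homomorphism from a compact group to a Hausdorff group, hence automatically a topological isomorphism --- which is also why the paper can afford to leave the topological side implicit.
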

	
	\begin{proof}
		For a characterization of the compact connected Lie groups, see \cite[\S4.2]{Procesi}. We now argue that this implies that any compact abelian Lie group can be expressed in the way described above.
		
		We want to prove that $G$ is isomorphic to a direct sum of $G_0$ and $A = G / G_0$. The embedding $\iota_1 : G_0 \hookrightarrow G$ is just the canonical embedding, so it remains to find an embedding $\iota_2 : A \hookrightarrow G$.
		
		Since $A$ is finite abelian, there exist $a_1, \ldots, a_m \in A$, as well as $\ell_1, \ldots, \ell_m \geq 2$ such that
		$$A = \left< a_1 \right>_{\ell 1} \oplus \cdots \oplus \left< a_m \right>_{\ell_m} .$$ Let $\pi : G \to G / G_0$ be the canonical projection. For each $j \in \{1, \ldots, m\}$, choose $y_j \in G$ such that $\pi(y_j) = a_j$. Then $\ell_j y_j \in \ker \pi = G_0$. Since $G_0$ is divisible, there exists $y_j' \in G_0$ such that $\ell_j y_j' = \ell_j y_j$. Set $b_j = y_j - y_j'$, so that $\ell_j b_j = 0$. However, because $\pi(k b_j) = k \pi(a_j)$ for $k = 1, \ldots, \ell_j - 1$, we know that $b_j$ has order $\ell_j$ in $G$.
		
		Let $B = \oplus_{j = 1}^m \left< b_j \right>_{\ell_1}$ denote the subgroup of $G$ generated by $\{ b_1, \ldots, b_m \}$. Then $\pi : B \to A$ is an isomorphism. Let $\iota_2 = \pi^{-1} : A \to B \leq G$.
		
		We claim now that $\iota_1 : G_0 \hookrightarrow G, \iota_2 : A \hookrightarrow G$ generate $G$ as a direct sum of $G_0$ and $A$. First, we observe that $G_0 \cap B = \{0\}$, since if $x \in G_0 \cap B$, then $\pi(x) = 0$, because $G_0 = \ker \pi$, but also $x = \pi^{-1} (0) = 0$, since $\pi|_{B} : B \to A$ is an isomorphism. Therefore, the sum $G_0 + B$ is direct.
		
		Now, we have to show that $G = G_0 + B$. If $x \in G$, then choose $b \in B$ such that $\pi(b) = \pi(x)$. Then $x - b \in \ker \pi = G_0$. Thus $x = (x - b) + b$.
		
		Therefore $G = G_0 \oplus B \cong G_0 \oplus A$.
	\end{proof}
	
	\begin{Prop}
		Let $G$ be a compact connected abelian Lie group, and let $(T_n)_{n = 1}^\infty$ be a sequence of continuous group endomorphisms on $G$ with the Difference Property. Then each $T_n$ is surjective.
	\end{Prop}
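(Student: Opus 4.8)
The plan is to reduce everything to linear algebra over $\mathbb{Z}$ via the structure theorem for $G$. By Fact \ref{Characterizing compact abelian Lie groups}, a compact connected abelian Lie group is a torus $G \cong \mathbb{T}^h = \mathbb{R}^h / \mathbb{Z}^h$ for some $h \in \mathbb{N}$. The continuous group endomorphisms of $\mathbb{T}^h$ are exactly the maps induced by integer matrices $M \in M_h(\mathbb{Z})$ acting on $\mathbb{R}^h$ modulo $\mathbb{Z}^h$ (a continuous homomorphism lifts to an $\mathbb{R}$-linear map of $\mathbb{R}^h$ carrying $\mathbb{Z}^h$ into itself). Under this identification, composition of endomorphisms corresponds to matrix multiplication, and the pointwise difference of two endomorphisms corresponds to the difference of the associated matrices. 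Crucially, such an endomorphism $M$ is surjective if and only if $\det M \neq 0$, since its image is a subtorus of dimension $\operatorname{rank}(M)$, which equals all of $\mathbb{T}^h$ precisely when $M$ is nonsingular.

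So first I would fix integer matrices $A_n \in M_h(\mathbb{Z})$ representing $T_n$, and set $B_n = A_n A_{n-1} \cdots A_1$, the matrix representing $\Phi_n$. Translating the Difference Property through the correspondence above, the hypothesis becomes the purely arithmetic statement that $\det(B_n - B_m) \neq 0$ for all $n \neq m$.

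The key step is the factorization, valid for every $n \geq 1$,
$$B_{n+1} - B_n = A_{n+1} B_n - B_n = (A_{n+1} - I) B_n .$$
Taking determinants and using multiplicativity, the Difference Property forces $\det(A_{n+1} - I)\, \det(B_n) \neq 0$, and in particular $\det B_n \neq 0$ for every $n \geq 1$. This mirrors the factorization used in the commutative Proposition above, except that here---lacking commutativity---we can only peel the factor $A_{n+1} - I$ off on the \emph{left}; fortunately that is exactly the factorization that isolates $\det B_n$.

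Finally I would conclude by multiplicativity once more: $\det A_1 = \det B_1 \neq 0$, and for $n \geq 2$ we have $\det A_n = \det B_n / \det B_{n-1} \neq 0$ since both $\det B_n$ and $\det B_{n-1}$ are nonzero. Hence each $A_n$ is nonsingular, so each $T_n$ is surjective. I expect the only point genuinely requiring care to be the setup---verifying the ring isomorphism $\operatorname{End}(\mathbb{T}^h) \cong M_h(\mathbb{Z})$, that subtraction of endomorphisms becomes subtraction of matrices, and that surjectivity corresponds to nonvanishing of the determinant---after which the argument is a short determinant computation with no real obstacle.
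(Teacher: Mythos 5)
Your proposal is correct and follows essentially the same route as the paper: identify $G$ with $\mathbb{R}^h/\mathbb{Z}^h$, represent the endomorphisms by integer matrices, use the factorization $\Phi_{n+1}-\Phi_n = (T_{n+1}-I)\Phi_n$, and take determinants to conclude $\det(T_n)\neq 0$ for each $n$. The only cosmetic difference is that you deduce $\det B_n \neq 0$ and then divide consecutive determinants, while the paper expands $\det(B_n)$ directly as $\det(A_n)\cdots\det(A_1)$; the content is identical.
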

	
	\begin{proof}
		Let $G = \mathbb{R}^h / \mathbb{Z}^h$. We can express $T_n$ as an $h \times h$ integer matrix. The maps $\Phi_n - \Phi_m$ are surjective if and only if $\det(\Phi_n - \Phi_m) \neq  0$. We can conclude that each $T_n$ is invertible because
		\begin{align*}
			\Phi_{n + 1} - \Phi_n	& = (T_{n + 1} - I) \Phi_n \\
			& = (T_{n + 1} - I) T_n T_{n - 1} \cdots T_1 \\
			\Rightarrow \det (\Phi_{n + 1} - \Phi_n)	& = \det(T_{n + 1} - I) \det (T_n) \det(T_{n - 1}) \cdots \det(T_1) \\
			& \neq 0 .
		\end{align*}
		This implies in particular that $\det (T_n) \neq 0$, meaning that $T_n$ is surjective.
	\end{proof}
	
	We can, however, provide \emph{negative} results on when a sequence with the Difference Property might exist.
	
	\begin{Lem}
		Let $G = G_0 \oplus A$ be a compact abelian group, where $G_0$ is a compact connected abelian group and $A$ is a finite abelian group. Let $\psi : G \to A$ be a continuous group homomorphism. Then $$\psi(G) = \psi(0 \oplus A) = \{\psi(0, a) : a \in A \} .$$
	\end{Lem}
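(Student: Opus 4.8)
\emph{Proof proposal.} The plan is to exploit the tension between the \emph{connectedness} of $G_0$ and the \emph{finiteness} of $A$: since $A$ is finite it is totally disconnected, so any continuous homomorphic image of the connected factor $G_0$ inside $A$ must collapse to the identity. Once I know $\psi$ kills $G_0$, the claim follows by decomposing along the direct sum.

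First I would fix the topology on $A$. Because $G = G_0 \oplus A$ is a compact abelian group, it is Hausdorff, and the subgroup $0 \oplus A$ is a finite subset of $G$; hence $0 \oplus A$ inherits the discrete topology as a subspace of $G$. In particular, the only connected subset of $A$ containing the identity $0$ is $\{0\}$ itself. Next, since $\psi$ is continuous and $G_0 \oplus 0$ is connected, the image $\psi(G_0 \oplus 0)$ is a connected subset of the discrete group $A$ that contains $\psi(0) = 0$. The key step is to conclude from this that
$$\psi(G_0 \oplus 0) = \{0\},$$
i.e., that $\psi$ annihilates the connected factor $G_0$.

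Finally, I would decompose an arbitrary element of $G$. Writing $(g, a) = (g, 0) + (0, a)$ and using that $\psi$ is a homomorphism together with the vanishing just established, we obtain
$$\psi(g, a) = \psi(g, 0) + \psi(0, a) = 0 + \psi(0, a) = \psi(0, a).$$
This gives $\psi(G) \subseteq \psi(0 \oplus A)$, and the reverse inclusion is immediate because $0 \oplus A \subseteq G$, so the two images coincide.

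The only point requiring any care—and the closest thing to an obstacle—is the purely topological observation that $A$ carries the discrete topology as a subspace of $G$, since the connectedness argument rests on this. This is guaranteed by the finiteness of $A$ together with the Hausdorff property of compact groups, so no substantive difficulty is expected; the heart of the argument is simply that a connected set maps into a point of a discrete group.
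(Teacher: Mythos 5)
Your proposal is correct and takes essentially the same approach as the paper: both arguments hinge on the fact that $\psi|_{G_0}$ must vanish because $G_0$ is connected and $A$ is (discrete) finite, and then use the homomorphism property to write $\psi(g,a) = \psi(g,0) + \psi(0,a) = \psi(0,a)$. The only difference is cosmetic---you spell out why $A$ is discrete and why a connected image in a discrete group is a point, a step the paper states without elaboration.
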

	
	\begin{proof}
		Since $G_0$ is connected, we know that $\psi|_{G_0} : G_0 \to A$ is the zero map. Clearly $\psi(0 \oplus A) \subseteq \psi(G)$. Now suppose that $a \in \psi(G)$. Then there exist $x_0 \in G_0, a_0 \in A$ such that $\psi(x_0, a_0) = a$. But $\psi(0, a_0) = \psi(x_0, a_0) - \psi(x_0, 0) = \psi(x_0, a_0)$. Thus $a \in \psi(0 \oplus A)$.
	\end{proof}
	
	\begin{Prop}
		Let $G = G_0 \oplus A$, where $G_0$ is a compact connected abelian group, and $A$ a finite group with more than one element. Then there does not exist a sequence $(T_n)_{n = 1}^\infty$ on $G$ with the Difference Property. In particular, there does not exist a sequence with the Difference Property on any compact abelian Lie group that is not connected.
	\end{Prop}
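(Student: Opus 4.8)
The plan is to reduce the problem to the finite quotient $A$ and then exploit the finiteness of $\operatorname{End}(A)$. Write $\pi_A : G \to A$ for the projection onto the second summand, and for each $n \in \mathbb{N}$ define $\sigma_n : A \to A$ by $\sigma_n(a) = \pi_A(\Phi_n(0, a))$. A one-line check, using that $\Phi_n$ is a homomorphism and that $\pi_A$ is a homomorphism, shows that each $\sigma_n$ is a group endomorphism of $A$. Hence the sequence $(\sigma_n)_{n = 1}^\infty$ takes values in the \emph{finite} set $\operatorname{End}(A)$.

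Next I would show that the Difference Property forces $\sigma_n - \sigma_m$ to be an automorphism of $A$ whenever $n \neq m$. Indeed, if $\Phi_n - \Phi_m$ is surjective onto $G$, then $\psi := \pi_A \circ (\Phi_n - \Phi_m) : G \to A$ is a surjective continuous homomorphism, since $\pi_A$ is surjective. By the preceding Lemma, $\psi(G) = \psi(0 \oplus A)$, so already the restriction $\psi|_{0 \oplus A}$ is surjective onto $A$. But $\psi|_{0 \oplus A}(a) = \pi_A(\Phi_n(0,a)) - \pi_A(\Phi_m(0,a)) = (\sigma_n - \sigma_m)(a)$, so $\sigma_n - \sigma_m$ is a surjective endomorphism of $A$. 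A surjective endomorphism of a finite group is bijective, so $\sigma_n - \sigma_m \in \operatorname{Aut}(A)$; in particular it is never the zero map.

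The contradiction now comes from pigeonhole. Since $\operatorname{End}(A)$ is finite while the sequence $(\sigma_n)_{n = 1}^\infty$ is infinite, there must exist $n \neq m$ with $\sigma_n = \sigma_m$, whence $\sigma_n - \sigma_m = 0$. Because $|A| > 1$, the zero endomorphism of $A$ is not surjective, contradicting the previous step. Therefore no sequence $(T_n)_{n = 1}^\infty$ with the Difference Property can exist on $G$. The final clause then follows immediately: by Fact \ref{Characterizing compact abelian Lie groups}, a compact abelian Lie group decomposes as $G_0 \oplus B$ with $B \cong G / G_0$ finite, and if $G$ is disconnected this finite part is nontrivial, so the first part of the proposition applies.

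The argument is essentially complete as sketched; the only real content is the reduction step, and the main thing to get right is that surjectivity of $\Phi_n - \Phi_m$ on all of $G$ transfers to surjectivity of $\sigma_n - \sigma_m$ on $A$. This is exactly what the preceding Lemma supplies, since it lets us discard the connected part $G_0$ (on which every homomorphism into $A$ vanishes) and work entirely inside $A$. Everything else reduces to the observation that $\operatorname{End}(A)$ is finite, which makes the pigeonhole step immediate; so I expect no genuine obstacle beyond stating the reduction cleanly.
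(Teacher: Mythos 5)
Your proof is correct and follows essentially the same route as the paper's: reduce to the finite summand $A$ via the preceding Lemma, then apply the pigeonhole principle to the finitely many endomorphisms of $A$ to produce $n \neq m$ whose difference is the zero map, contradicting surjectivity. If anything, your version is slightly more careful than the paper's, since you insert the projection $\pi_A$ explicitly so that the maps $\sigma_n = \pi_A \circ \Phi_n \circ \iota$ genuinely lie in the finite set $\operatorname{End}(A)$, whereas the paper treats $\Phi_n \circ \iota$ (whose codomain is $G$) as an element of $A^A$ and leaves the projection implicit.
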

	
	\begin{proof}
		Assume for contradiction that $(T_n)_{n = 1}^\infty$ has the Difference Property. Let $\Delta_{n, m} = \Phi_n - \Phi_m$. Let $\iota : A \hookrightarrow G_0 \oplus A$ be the canonical embedding. Then by the previous lemma, the maps $\Delta_{n, m} \circ \iota = \left( \Phi_n \circ \iota \right) - \left( \Phi_m \circ \iota \right)$ are surjective. But this is a contradiction, since $n \mapsto \Phi_n \circ \iota$ is a mapping from $\mathbb{N}$ to the finite set $A^A$, which cannot be injective. Therefore there exist $n, m \in \mathbb{N}, n \neq m$ such that $\Phi_n \circ \iota = \Phi_m \circ \iota$, meaning that $\Delta_{n, m}(G) = \Delta_{n, m} \circ \iota(A) = \{ 0 \} \neq A$, a contradiction.
		
		The special case of compact abelian Lie groups comes from Fact \ref{Characterizing compact abelian Lie groups}.
	\end{proof}

	\section{A random non-autonomous spatial-temporal differentiation problem}\label{Probabilistic results}
	
	\begin{Lem}\label{Distribution and STDs}
		Let $G = (G, \rho)$ be a compact metrizable group with metric $\rho$. Let $(T_n)_{n = 1}^\infty$ be a family of Lipschitz-continuous maps $T_n : G \to G$, where $T_n$ is $L_n$-Lipschitz. Let $\Phi_n = T_n T_{n - 1} \cdots T_1, \Phi_0 = \operatorname{id}_G$. Then there exists a sequence $(\eta_k)_{k = 1}^\infty$ of positive numbers $\eta_k > 0, \eta_k \stackrel{k \to \infty}{\to} 0$ such that if $(r_k)_{k = 1}^\infty$ is a sequence of positive numbers $0 < r_k \leq \eta_k$, and $B_k(x) = B(x, r_k)$, then for every $f \in C(G)$, and every sequence $(x_n)_{n = 1}^\infty \in G^\mathbb{N}$, we have
		$$\left| \alpha_{B_k(x_k)} \left( \frac{1}{k} \sum_{i = 0}^{k - 1} \Phi_i f \right) - \frac{1}{k} \sum_{i = 0}^{k - 1} \Phi_i f(x_k) \right| \stackrel{k \to \infty}{\to} 0 .$$
	\end{Lem}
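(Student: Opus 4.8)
The plan is to isolate the whole problem in the modulus of continuity of $f$ together with the Lipschitz constants of the $\Phi_i$. Write $g_k = \frac{1}{k}\sum_{i=0}^{k-1}\Phi_i f$ for the averaged function (so $\Phi_i f = f\circ\Phi_i$). The first key observation is that $g_k(x_k)$ is constant over the ball $B_k(x_k)$, so the quantity to be estimated is itself an average of $g_k - g_k(x_k)$:
$$\alpha_{B_k(x_k)}(g_k) - g_k(x_k) = \frac{1}{\mu(B_k(x_k))}\int_{B_k(x_k)}\left( g_k(y) - g_k(x_k)\right)\,\mathrm{d}\mu(y).$$
This is therefore bounded in absolute value by the \emph{oscillation} $\sup_{y\in B_k(x_k)}|g_k(y) - g_k(x_k)|$. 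Here $\mu(B_k(x_k))>0$ because Haar measure is strictly positive on nonempty open sets, so $\alpha_{B_k(x_k)}$ is well-defined. Thus it suffices to force this oscillation to tend to $0$ uniformly in the center $x_k$.

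Second, I would control the oscillation through the Lipschitz data. Since $T_j$ is $L_j$-Lipschitz, the composition $\Phi_i = T_i\cdots T_1$ is Lipschitz with constant $M_i = \prod_{j=1}^i L_j$ (with $M_0 = 1$). Setting $\bar M_k = \max_{0\le i\le k-1} M_i$, any $y$ with $\rho(y,x_k)\le r_k$ satisfies $\rho(\Phi_i y,\Phi_i x_k)\le M_i r_k \le \bar M_k r_k$ for every $i\le k-1$. Introducing the modulus of continuity $\omega_f(\delta) = \sup_{\rho(u,v)\le\delta}|f(u)-f(v)|$, which is nondecreasing and satisfies $\omega_f(\delta)\to 0$ as $\delta\to 0$ by uniform continuity of $f$ on the compact group $G$, we obtain
$$|g_k(y) - g_k(x_k)| \le \frac{1}{k}\sum_{i=0}^{k-1}|f(\Phi_i y) - f(\Phi_i x_k)| \le \omega_f(\bar M_k r_k),$$
a bound independent of the center $x_k$ and hence of the sequence $(x_n)$.

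Third, I would choose the $\eta_k$. Taking $\eta_k = \frac{1}{k(1+\bar M_k)}$ gives a positive sequence with $\eta_k\le\frac{1}{2k}\to 0$ (since $\bar M_k\ge M_0 = 1$) and $\bar M_k\eta_k\le\frac{1}{k}\to 0$. Then for any admissible radius sequence $0<r_k\le\eta_k$ we have $\bar M_k r_k\le\bar M_k\eta_k\to 0$, so $\omega_f(\bar M_k r_k)\to 0$ for every fixed $f\in C(G)$, which combined with the first two steps yields the claim.

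The only real subtlety — and the reason the statement quantifies a single sequence $(\eta_k)$ before $f$ — is that $\eta_k$ must depend only on the Lipschitz data $(L_j)$, not on $f$ or the centers. This succeeds precisely because the entire $f$-dependence is confined to $\omega_f$, and $\omega_f(\delta)\to 0$ as $\delta\to 0$ for each individual $f$; forcing $\bar M_k r_k\to 0$ through the choice of $\eta_k$ therefore handles all $f\in C(G)$ and all center sequences simultaneously. The one piece of technical care is that the $M_i$ need not be monotone in $i$ when some $L_j<1$, which is why one uses the running maximum $\bar M_k$ rather than $M_{k-1}$.
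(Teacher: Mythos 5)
Your proof is correct and takes essentially the same approach as the paper's: both choose $\eta_k$ from the Lipschitz data alone so that $\sup_{0 \le i \le k-1} \mathrm{Lip}(\Phi_i)\, r_k \le 1/k \to 0$, then invoke uniform continuity of $f$ (your modulus $\omega_f$ versus the paper's $\epsilon$--$K$ argument) to make the oscillation of $\frac{1}{k}\sum_{i=0}^{k-1}\Phi_i f$ over the ball vanish uniformly in the centers. The only cosmetic difference is your choice $\eta_k = \frac{1}{k(1+\bar M_k)}$ with $\bar M_k = \max_{0\le i\le k-1}\prod_{j\le i} L_j$, versus the paper's $\eta_k = \tilde L_k^{-(k-1)}k^{-1}$ with $\tilde L_k = \max\{1, L_1, \ldots, L_{k-1}\}$, which bound the same quantity.
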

	
	\begin{proof}
		Let $\tilde{L}_k = \max\{ 1, L_1, L_2, \ldots, L_{k - 1} \}$. Set
		$$\eta_k = \tilde{L}_k^{- (k - 1)} k^{-1} .$$ Let $(r_k)_{k = 1}^\infty$ be a sequence such that $0 < r_k \leq \eta_k$. Then each $T_1, \ldots, T_{k - 1}$ is $\tilde{L}_k$-Lipschitz, so if $y \in B_k(x)$, i.e. if $\rho(x, y) < r_k$, and if $i \in [0, k - 1]$, then
		\begin{align*}
			\rho \left( \Phi_i x, \Phi_i y \right)	& = \rho \left( T_i T_{i - 1} \cdots T_1 x, T_i T_{i - 1} \cdots T_1 y \right) \\
			& \leq L_i L_{i - 1} \cdots L_1 \rho(x, y) \\
			& \leq \left( \tilde{L}_k \right)^i \rho(x, y) \\
			& \leq \left( \tilde{L}_k \right)^{i - (k - 1)} k^{-1} \\
			& \leq k^{-1} .
		\end{align*}
		
		Now let $f \in C(G)$, and fix $\epsilon > 0$. By uniform continuity of $f$, there exists $K \in \mathbb{N}$ such that $\rho(z, w) \leq \frac{1}{K} \Rightarrow |f(z) - f(w)| \leq \epsilon$. Then if $k \geq K$, then
		\begin{align*}
			\left| \alpha_{B_k(x_k)} \left( \frac{1}{k} \sum_{i = 0}^{k - 1} \Phi_i f \right) - \frac{1}{k} \sum_{i = 0}^{k - 1} \Phi_i f(x_k) \right|	& = \frac{1}{k} \left| \sum_{i = 0}^{k - 1} \Phi_i f(x_k) - \frac{1}{\mu(B_k(x_k))} \int_{B_k(x_k)} \Phi_i f \mathrm{d} \mu \right| \\
			& \leq \frac{1}{k} \sum_{i = 0}^{k - 1} \left| \Phi_i f(x_k) - \frac{1}{\mu(B_k(x_k))} \int_{B_k(x_k)} \Phi_i f \mathrm{d} \mu \right| \\
			& = \frac{1}{k} \sum_{i = 0}^{k - 1} \left| \frac{1}{\mu(B_k(x_k))} \int_{B_k(x_k)} \left( \Phi_i f(x_k) - \Phi_i f \right) \mathrm{d} \mu \right| \\
			& \leq \frac{1}{k} \sum_{i = 0}^{k - 1} \frac{1}{\mu(B_k(x_k))} \int_{B_k(x_k)} \left| \left( \Phi_i f(x_k) - \Phi_i f \right)\right| \mathrm{d} \mu \\
			& \leq \frac{1}{k} \sum_{i = 0}^{k - 1} \frac{1}{\mu(B_k(x_k))} \int_{B_k(x_k)} \epsilon \mathrm{d} \mu \\
			& = \epsilon .
		\end{align*}
		Therefore
		$$\left| \alpha_{B_k(x_k)} \left( \frac{1}{k} \sum_{i = 0}^{k - 1} \Phi_i f \right) - \frac{1}{k} \sum_{i = 0}^{k - 1} \Phi_i f(x_k) \right| \stackrel{k \to \infty}{\to} 0 .$$
	\end{proof}
	
	Our assumption that the $\{T_n\}_{n = 1}^\infty$ are Lipschitz is not overly restrictive. In particular, if $G$ has the structure of a Lie group with a Riemannian metric, and the maps $T_n$ are $C^1$, then the maps $T_n$ are automatically Lipschitz in that Riemannian metric.
	
	\begin{Thm}\label{Main Theorem}
		Let $G = (G, \rho)$ be a compact abelian metrizable group with metric $\rho$. Let $(T_n)_{n = 1}^\infty$ be a family of Lipschitz-continuous group homomorphisms with the Difference Property, where $T_n$ is $L_n$-Lipschitz. Then there exists a Borel subset $E$ of full measure and a sequence $(\eta_k)_{k = 1}^\infty$ of positive numbers $\eta_k > 0, \eta_k \stackrel{k \to \infty}{\to} 0$ such that if $(r_k)_{k = 1}^\infty$ is a sequence of positive numbers $0 < r_k \leq \eta_k$, and $B_k(x) = B(x, r_k)$, then for all $x \in E$, we have
		\begin{align*}
			\alpha_{B_k(x)} \left( \frac{1}{k} \sum_{i = 0}^{k - 1} \Phi_i f \right)	& \stackrel{k \to \infty}{\to} \int f \mathrm{d} \mu	& (\forall f \in C(G)) .
		\end{align*}
	\end{Thm}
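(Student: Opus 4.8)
The plan is to obtain the theorem by combining the two immediately preceding results: Theorem \ref{Difference Property Uniformly Distributed}, which supplies a full-measure set of points whose orbits under the $\Phi_i$ are uniformly distributed, and Lemma \ref{Distribution and STDs}, which provides the radius-control sequence $(\eta_k)_{k=1}^\infty$ and shows that the ball-averages $\alpha_{B_k(x)}$ track the pointwise Cesàro averages. There is no substantial new computation to perform; the work lies almost entirely in matching hypotheses and, above all, in arranging the quantifiers so that a single set $E$ and a single sequence $(\eta_k)$ serve all $f \in C(G)$ at once.

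First I would check eligibility for Theorem \ref{Difference Property Uniformly Distributed}. Each $\Phi_i = T_i T_{i-1}\cdots T_1$ is a composition of continuous group homomorphisms, hence a continuous group endomorphism of $G$, and we adjoin $\Phi_0 = \operatorname{id}_G$ to the family. The Difference Property guarantees that every pairwise difference $\Phi_i - \Phi_j$ with $i \neq j$ is surjective; assuming $G$ is nontrivial (the trivial case makes the theorem vacuous), surjectivity forces $\Phi_i - \Phi_j \neq 0$, so the $\Phi_i$ are pairwise distinct. Theorem \ref{Difference Property Uniformly Distributed} therefore yields a Borel set $E \subseteq G$ with $\mu(E) = 1$ such that for every $x \in E$ the sequence $(\Phi_i x)_{i=0}^\infty$ is uniformly distributed with respect to $\mu$. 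Unpacking the definition of uniform distribution, this means that for each fixed $x \in E$,
$$\frac{1}{k} \sum_{i=0}^{k-1} \Phi_i f(x) = \frac{1}{k}\sum_{i=0}^{k-1} f(\Phi_i x) \stackrel{k \to \infty}{\to} \int f \, \mathrm{d}\mu \qquad (\forall f \in C(G)),$$
where I use that $\Phi_i f$ denotes the Koopman composition $f \circ \Phi_i$. The essential feature I would emphasize here is that the single null set $G \setminus E$ is discarded uniformly in $f$: uniform distribution is one property of the point $x$ that, once it holds, delivers convergence simultaneously for every continuous $f$.

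Next I would invoke Lemma \ref{Distribution and STDs}, whose Lipschitz hypotheses are exactly those assumed here, to produce the sequence $(\eta_k)_{k=1}^\infty$ with $\eta_k \to 0$; note that this sequence depends only on the Lipschitz constants $L_n$, not on $f$ or on any choice of centers. Applying the lemma to the constant center sequence $x_n \equiv x$ gives, for every $x \in G$ and every $f \in C(G)$,
$$\left| \alpha_{B_k(x)}\left( \frac{1}{k}\sum_{i=0}^{k-1} \Phi_i f \right) - \frac{1}{k}\sum_{i=0}^{k-1}\Phi_i f(x) \right| \stackrel{k \to \infty}{\to} 0.$$
Finally, for $x \in E$ I would combine the two displays by the triangle inequality, bounding
$$\left| \alpha_{B_k(x)}\left( \frac{1}{k}\sum_{i=0}^{k-1} \Phi_i f \right) - \int f \, \mathrm{d}\mu \right|$$
by the sum of the two quantities above, each of which tends to $0$. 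This yields the asserted convergence for all $x \in E$ and all $f \in C(G)$ with one common $E$ and one common $(\eta_k)$. I do not anticipate a genuine obstacle; the only points demanding care are the bookkeeping of quantifiers just described and the inclusion of the index $i = 0$ (so that $\Phi_0 = \operatorname{id}_G$ participates in the Difference Property and in the uniformly-distributed orbit), both of which are furnished directly by the cited statements.
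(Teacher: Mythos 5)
Your proposal is correct and takes essentially the same route as the paper's own proof: define $E$ as the set of points $x$ for which $(\Phi_i x)_{i=0}^\infty$ is uniformly distributed (full measure by Theorem \ref{Difference Property Uniformly Distributed}), take $(\eta_k)_{k=1}^\infty$ from Lemma \ref{Distribution and STDs} specialized to the constant center sequence $x_k \equiv x$, and conclude via the triangle inequality. Your extra hypothesis-checking (pairwise distinctness of the $\Phi_i$, the role of $\Phi_0$, and the uniformity of $E$ and $(\eta_k)$ over all $f \in C(G)$) is more careful than the paper's write-up but does not alter the argument.
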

	
	\begin{proof}
		Let $E = \left\{x \in G : \left( \Phi_n x \right)_{n = 0}^\infty \textrm{ is uniformly distributed in $G$} \right\}$. By Theorem \ref{Difference Property Uniformly Distributed}, this set is of full measure. Now let $(\eta_k)_{k = 1}^\infty$ be as in Lemma \ref{Distribution and STDs}. Then if $0 < r_k \leq \eta_k$, and if $x \in E$, then for every $f \in C(G)$ we have
		\begin{align*}
			\left| \alpha_{B_k(x)} \left( \frac{1}{k} \sum_{i = 0}^{k - 1} \Phi_i f \right) - \int f \mathrm{d} \mu \right|	& \leq \left| \alpha_{B_k(x)} \left( \frac{1}{k} \sum_{i = 0}^{k - 1} \Phi_i f \right) - \frac{1}{k} \sum_{i = 0}^{k - 1} \Phi_i f(x) \right| + \left| \left( \frac{1}{k} \sum_{i = 0}^{k - 1} \Phi_i f (x) \right) - \int f \mathrm{d} \mu \right| \\
			& \stackrel{k \to \infty}{\to} 0 ,
		\end{align*}
		where the first term goes to $0$ by Lemma \ref{Distribution and STDs}, and the second term goes to $0$ by the fact that $\left( \Phi_{n} x \right)_{n = 0}^\infty$ is  uniformly distributed in $G$.
	\end{proof}

	\section{Further probabilistic results about uniformly distributed sequences}\label{Not concentric}

	We now consider the distribution properties of randomly chosen sequences $(x_n)_{n = 0}^\infty \in G^{\mathbb{N}_0} = \prod_{n = 0}^\infty G$ in $G$.
	
	\begin{Lem}\label{Borel algebra of products}
		Let $(X_n)_{n \in \mathbb{N}}$ be a sequence of separable metrizable topological spaces. Then
		$$\operatorname{Bo} \left( \prod_{n \in \mathbb{N}} X_n \right) = \bigotimes_{n \in \mathbb{N} } \operatorname{Bo} (X_n) .$$
	\end{Lem}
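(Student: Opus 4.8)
The plan is to prove the two inclusions separately. The inclusion $\bigotimes_{n \in \mathbb{N}} \operatorname{Bo}(X_n) \subseteq \operatorname{Bo}(\prod_{n \in \mathbb{N}} X_n)$ is the routine half and requires no hypothesis on the $X_n$ beyond what makes the projections continuous. Each coordinate projection $\pi_n : \prod_m X_m \to X_n$ is continuous for the product topology, hence Borel measurable, so $\pi_n^{-1}(B) \in \operatorname{Bo}(\prod_m X_m)$ for every $B \in \operatorname{Bo}(X_n)$. Since the product $\sigma$-algebra is by definition the smallest $\sigma$-algebra making all the $\pi_n$ measurable, equivalently the one generated by the cylinder sets $\pi_n^{-1}(B)$, this already yields $\bigotimes_n \operatorname{Bo}(X_n) \subseteq \operatorname{Bo}(\prod_n X_n)$.

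The substance of the lemma lies in the reverse inclusion $\operatorname{Bo}(\prod_n X_n) \subseteq \bigotimes_n \operatorname{Bo}(X_n)$, and this is where separability and metrizability enter. First I would record that a countable product of separable metrizable spaces is again separable and metrizable, hence second countable; concretely, if $\mathcal{B}_n$ is a countable base for $X_n$, then the collection of finite-support boxes $\prod_n V_n$ with $V_n \in \mathcal{B}_n$ for $n$ ranging over a finite set $F \subseteq \mathbb{N}$ and $V_n = X_n$ otherwise forms a countable base $\mathcal{B}$ for the product topology. The key observation is that each such basic box is a finite intersection $\bigcap_{n \in F} \pi_n^{-1}(V_n)$ of cylinder sets, and therefore already lies in $\bigotimes_n \operatorname{Bo}(X_n)$.

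To finish, I would invoke second countability to write an arbitrary open set $U \subseteq \prod_n X_n$ as a countable union of members of $\mathcal{B}$: since $\mathcal{B}$ is a countable base, $U = \bigcup \{ W \in \mathcal{B} : W \subseteq U \}$, and this is a countable union. As each such $W$ belongs to the product $\sigma$-algebra, and $\sigma$-algebras are closed under countable unions, every open $U$ lies in $\bigotimes_n \operatorname{Bo}(X_n)$. Because the open sets generate $\operatorname{Bo}(\prod_n X_n)$, the reverse inclusion follows, and combined with the first paragraph this gives the claimed equality.

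The main obstacle, and really the crux of the argument, is the passage from an arbitrary open set to a countable union of measurable rectangles, which is exactly what fails for non-separable spaces or for uncountable products. Separability, equivalently second countability in the metrizable setting, is precisely the hypothesis that licenses the reduction to a countable base; without it one cannot in general guarantee that every Borel set of the product lies in the product $\sigma$-algebra, so I expect essentially all the work to be concentrated in justifying that reduction.
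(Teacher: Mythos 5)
Your proof is correct and complete. Note, however, that the paper does not actually supply an argument for this lemma: its ``proof'' consists solely of a citation to Lemma 1.2 of Kallenberg's \emph{Foundations of Modern Probability}, so there is no in-paper argument to compare against. Your two-inclusion argument is the standard proof of that cited result: the easy inclusion follows from continuity of the coordinate projections (the product $\sigma$-algebra being generated by the cylinder sets $\pi_n^{-1}(B)$), and the substantive inclusion follows from the fact that a countable product of separable metrizable --- hence second countable --- spaces is second countable, with a countable base of finite-support boxes; every open set of the product is then a countable union of such boxes, each of which is a finite intersection of open cylinders and hence lies in $\bigotimes_{n} \operatorname{Bo}(X_n)$. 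You have also correctly isolated where the hypotheses enter: second countability is exactly what licenses the reduction of an arbitrary open set to a countable union of measurable rectangles, and this is precisely what fails for uncountable products or non-separable factors. In effect, your proposal makes the paper self-contained at this point, whereas the paper buys brevity by deferring to the literature.
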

	
	\begin{proof}
		\cite[Lemma 1.2]{Kallenberg}
	\end{proof}
	
	\begin{Def}
		Let $X$ be a nonempty set. We call $\mathcal{A} \subseteq \mathcal{P}(X)$ a \emph{semi-algebra} if
		\begin{enumerate}[label=(\alph*)]
			\item $\emptyset \in \mathcal{A}$
			\item If $A, B \in \mathcal{A}$, then $A \cap B \in \mathcal{A}$.
			\item For every $A \in \mathcal{A}$, the set $X \setminus A$ can be written as a disjoint union of finitely many elements of $\mathcal{A}$.
		\end{enumerate}
	\end{Def}
	
	\begin{Lem}\label{Generating semi-algebra}
		Let $\left( X_n, \mathcal{B}_n \right)_{n \in \mathbb{N}}$ be a sequence of measurable spaces, and let
		$$\mathcal{A} = \left\{ B_1 \times \cdots \times B_N \times \prod_{n = N + 1}^\infty X_n : N \in \mathbb{N}, B_1 \in \mathcal{B}_1, \ldots, B_N \in \mathcal{B}_N \right\} .$$
		Then $\mathcal{A}$ is a semi-algebra that generates $\bigotimes_{n \in \mathbb{N} } \mathcal{B}_n$.
	\end{Lem}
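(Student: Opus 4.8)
The plan is to verify the three defining axioms of a semi-algebra one at a time, and then to establish the generation claim by a two-sided inclusion between $\sigma(\mathcal{A})$ and the product $\sigma$-algebra $\bigotimes_{n} \mathcal{B}_n$. Throughout I will use the elementary observation that any member of $\mathcal{A}$ may be rewritten with an arbitrarily large ``cutoff'' $N$ by padding the added coordinates with the full space $X_n$; this lets me assume that any two given members of $\mathcal{A}$ depend on the same finite block of coordinates.

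Axiom (a) is immediate: taking $N = 1$ and $B_1 = \emptyset \in \mathcal{B}_1$ yields $\emptyset \in \mathcal{A}$. For axiom (b), given $A = B_1 \times \cdots \times B_N \times \prod_{n > N} X_n$ and $A' = B_1' \times \cdots \times B_N' \times \prod_{n > N} X_n$ (with a common $N$ by the padding remark), their intersection is $(B_1 \cap B_1') \times \cdots \times (B_N \cap B_N') \times \prod_{n > N} X_n$, and each $B_i \cap B_i' \in \mathcal{B}_i$ since $\mathcal{B}_i$ is a $\sigma$-algebra, so the intersection lies in $\mathcal{A}$.

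The one step requiring a genuine idea --- and the step I expect to be the main obstacle --- is axiom (c), the decomposition of the complement of a cylinder set into a finite disjoint union of cylinder sets. The key is to partition the complement according to the first coordinate at which a point fails to lie in the prescribed set. Concretely, for $A = B_1 \times \cdots \times B_N \times \prod_{n > N} X_n$ I would write
$$\left( \prod_n X_n \right) \setminus A = \bigsqcup_{i = 1}^N B_1 \times \cdots \times B_{i - 1} \times (X_i \setminus B_i) \times X_{i + 1} \times \cdots \times X_N \times \prod_{n > N} X_n .$$
The $i$-th block consists exactly of those points whose first ``failing'' coordinate is $i$, so the blocks are pairwise disjoint and their union is the full complement; since each $X_i \setminus B_i \in \mathcal{B}_i$, every block is a member of $\mathcal{A}$. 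This establishes that $\mathcal{A}$ is a semi-algebra.

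For the generation claim I would sandwich. On one hand, each coordinate-projection preimage $\pi_n^{-1}(B) = X_1 \times \cdots \times X_{n - 1} \times B \times \prod_{m > n} X_m$ belongs to $\mathcal{A}$ for $B \in \mathcal{B}_n$, and these preimages generate $\bigotimes_n \mathcal{B}_n$ by definition of the product $\sigma$-algebra, so $\bigotimes_n \mathcal{B}_n \subseteq \sigma(\mathcal{A})$. On the other hand, every member of $\mathcal{A}$ is the finite intersection $\bigcap_{i = 1}^N \pi_i^{-1}(B_i)$ of such preimages and is therefore $\bigotimes_n \mathcal{B}_n$-measurable, giving $\mathcal{A} \subseteq \bigotimes_n \mathcal{B}_n$ and hence $\sigma(\mathcal{A}) \subseteq \bigotimes_n \mathcal{B}_n$. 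The two inclusions together yield $\sigma(\mathcal{A}) = \bigotimes_n \mathcal{B}_n$, completing the argument.
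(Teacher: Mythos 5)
Your proof is correct, and the overall skeleton (verify the three semi-algebra axioms, then sandwich $\sigma(\mathcal{A})$ against $\bigotimes_n \mathcal{B}_n$ using the coordinate projections) matches the paper's; the generation step in particular is essentially identical. Where you genuinely diverge is in axiom (c). You decompose the complement of $A = B_1 \times \cdots \times B_N \times \prod_{n > N} X_n$ by the \emph{first} failing coordinate, producing $N$ pairwise disjoint cylinder blocks $B_1 \times \cdots \times B_{i-1} \times (X_i \setminus B_i) \times X_{i+1} \times \cdots \times X_N \times \prod_{n > N} X_n$. The paper instead indexes the decomposition by the \emph{exact set} $I \subseteq \{1, \ldots, N\}$ of failing coordinates, writing the complement as a disjoint union of the sets $\bigl( \bigcap_{i \in I} \pi_i^{-1}(B_i^\complement) \bigr) \cap \bigl( \bigcap_{j \notin I} \pi_j^{-1}(B_j) \bigr)$, which yields exponentially many pieces. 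Your version is more economical ($N$ blocks versus $2^N$) and its disjointness is just as transparent; it also sidesteps a small slip in the paper, whose union as written ranges over \emph{all} $I \in \mathcal{P}(\{1, \ldots, N\})$ including $I = \emptyset$, whose corresponding term is $A$ itself rather than a subset of the complement, so the paper's index set should be restricted to nonempty $I$. The only point worth making explicit in your write-up is the padding remark you rely on for axiom (b): two cylinders with cutoffs $N \leq M$ are brought to a common cutoff by appending factors $B_n = X_n \in \mathcal{B}_n$ for $N < n \leq M$, which is exactly why the common-$N$ assumption is harmless.
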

	
	\begin{proof}
		For $n \in \mathbb{N}$, let $\pi_i : \prod_{n \in \mathbb{N}} X_n \twoheadrightarrow X_i$ be the map $\pi_i : (x_n)_{n \in \mathbb{N}} \mapsto x_i$. Then $\mathcal{A}$ can be written as
		$$\mathcal{A} = \left\{ \bigcap_{n = 1}^N \pi_n^{-1} (B_n) : N \in \mathbb{N}, B_1 \in \mathcal{B}_1, \ldots, B_N \in \mathcal{B}_N \right\} .$$
		Written this way, it is clear that $\emptyset \in \mathcal{A}$ and that $\mathcal{A}$ is closed under finite intersections. Finally, we will prove that the complement of every set in $\mathcal{A}$ can be expressed as the disjoint union of finitely many elements of $\mathcal{A}$. Let $A = \bigcap_{n = 1}^N \pi_n^{-1} (B_n)$, where $B_1 \in \mathcal{B}_1, \ldots, B_N \in \mathcal{B}_N$. Then
		\begin{align*}
			A^\complement	& = \left( \bigcap_{n = 1}^N \pi_n^{-1} (B_n) \right)^\complement \\
			& = \bigcup_{n = 1}^N \pi_n^{-1} \left( B_n^\complement \right) \\
			& = \bigsqcup_{ I \in \mathcal{P}(\{1, 2, \ldots, N\}) } \left[ \left( \bigcap_{i \in I} \pi_i^{-1}(B_i)^\complement \right) \cap \left( \bigcap_{j \in \{1, \ldots, N\} \setminus I } \left( \pi_j^{-1}(B)^\complement \right)^\complement \right) \right] \\
			& = \bigsqcup_{ I \in \mathcal{P}(\{1, 2, \ldots, N\}) } \left[ \left( \bigcap_{i \in I} \pi_i^{-1} \left(B_i^\complement \right) \right) \cap \left( \bigcap_{j \in \{1, \ldots, N\} \setminus I } \pi_i^{-1}(B_j) \right) \right] .
		\end{align*}
		Therefore, we have written $A^\complement$ as a disjoint union of elements of $\mathcal{A}$.
		
		Finally, to justify our claim that $\mathcal{A}$ generates $\bigotimes_{n \in \mathbb{N} } \mathcal{B}_n$ as a $\sigma$-algebra, we note that $\bigotimes_{n \in \mathbb{N} } \mathcal{B}_n$ is generated as a $\sigma$-algebra by $\left\{ \pi_n^{-1} (B_n) : n \in \mathbb{N}, B_n \in \mathcal{B}_n \right\}$, and
		$$\left\{ \pi_n^{-1} (B_n) : n \in \mathbb{N}, B_n \in \mathcal{B}_n \right\} \subseteq \mathcal{A} \subseteq \bigotimes_{n \in \mathbb{N}} \mathcal{B}_n .$$
	\end{proof}
	
	We go to the trouble of proving Lemma \ref{Generating semi-algebra} because when checking whether a map between probability spaces is measure-preserving, it suffices to see how the map behaves on a generating semi-algebra, as in the following result.
	
	\begin{Lem}\label{Semi-algebras and pmp}
		Let $(X_1, \mathcal{B}_1, \mu_1) , (X_2, \mathcal{B}_2, \mu_2)$ be probability spaces, and let $T : X_1 \to X_2$ be a map. Let $\mathcal{A}_2 \subseteq \mathcal{B}_2$ be a semi-algebra which generates the $\sigma$-algebra $\mathcal{B}_2$. Then $T$ is measurable and measure-preserving iff $T^{-1} A \in \mathcal{B}_1$ for all $A \in \mathcal{A}_2$, and $\mu_1 \left( T^{-1} A \right) = \mu_2(A)$.
	\end{Lem}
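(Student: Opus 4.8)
The plan is to treat the two directions asymmetrically: the forward implication is immediate from the definitions, so essentially all the content lies in the converse, where the task is to promote agreement on the semi-algebra $\mathcal{A}_2$ to agreement on the full $\sigma$-algebra $\mathcal{B}_2$. For the ``only if'' direction, if $T$ is measurable and measure-preserving, then by definition $T^{-1}B \in \mathcal{B}_1$ and $\mu_1(T^{-1}B) = \mu_2(B)$ hold for every $B \in \mathcal{B}_2$, and in particular for every $A \in \mathcal{A}_2 \subseteq \mathcal{B}_2$.

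For the ``if'' direction I would establish measurability and measure-preservation in that order, since the pushforward measure only makes sense once measurability is known. First I would prove measurability by the standard ``good sets'' argument: let $\mathcal{M} = \{ B \in \mathcal{B}_2 : T^{-1}B \in \mathcal{B}_1 \}$. Because preimages commute with complementation and countable unions, $\mathcal{M}$ is a $\sigma$-algebra; by hypothesis $\mathcal{A}_2 \subseteq \mathcal{M}$, and since $\mathcal{A}_2$ generates $\mathcal{B}_2$ we conclude $\mathcal{B}_2 = \sigma(\mathcal{A}_2) \subseteq \mathcal{M}$, so $T$ is measurable. Next, with $T$ measurable, the pushforward $\nu(B) := \mu_1(T^{-1}B)$ is a well-defined Borel probability measure on $(X_2, \mathcal{B}_2)$, and the hypothesis says exactly that $\nu$ and $\mu_2$ agree on $\mathcal{A}_2$. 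The goal becomes showing $\nu = \mu_2$ on all of $\mathcal{B}_2$.

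To close this gap I would invoke Dynkin's $\pi$--$\lambda$ theorem. The crucial structural observation is that a semi-algebra is a $\pi$-system, since axiom (b) in the definition gives closure under finite intersections. Meanwhile, the collection $\mathcal{L} = \{ B \in \mathcal{B}_2 : \nu(B) = \mu_2(B) \}$ is a $\lambda$-system: it contains $X_2$ because both measures are probabilities, it is closed under proper set differences by finite additivity, and it is closed under increasing countable unions by continuity from below. Since $\mathcal{A}_2 \subseteq \mathcal{L}$ and $\mathcal{A}_2$ is a $\pi$-system generating $\mathcal{B}_2$, the $\pi$--$\lambda$ theorem yields $\mathcal{B}_2 = \sigma(\mathcal{A}_2) \subseteq \mathcal{L}$, so $\nu = \mu_2$ everywhere on $\mathcal{B}_2$, which is precisely measure-preservation. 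The only point requiring care---rather than a genuine obstacle---is the verification of the $\lambda$-system closure properties, which depends essentially on both measures being \emph{finite}; for merely $\sigma$-finite measures one would need an additional exhaustion argument, but here the probability-space hypothesis makes the subtraction steps unconditionally valid.
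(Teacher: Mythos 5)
Your proof is correct, but note that the paper itself does not argue this lemma at all: its ``proof'' is a single citation to Theorem 1.2.2 of Dajani--Dirksin, where the standard route is to pass from the semi-algebra to the algebra it generates (finite disjoint unions of semi-algebra elements), extend the agreement of the two measures to that algebra by finite additivity, and then invoke uniqueness of the Carath\'eodory extension (equivalently, a monotone class argument). Your argument replaces that machinery with Dynkin's $\pi$--$\lambda$ theorem: the good-sets argument for measurability is exactly right, the observation that a semi-algebra is automatically a $\pi$-system (axiom (b)) is the key structural point, and your verification that $\mathcal{L} = \{ B \in \mathcal{B}_2 : \mu_1(T^{-1}B) = \mu_2(B) \}$ is a $\lambda$-system is sound, including the point that finiteness of the measures is what makes the subtraction step legitimate. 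The two routes are essentially equivalent in strength (both are uniqueness-of-measure arguments on a generating class closed under intersections), but yours buys self-containment --- it never needs the intermediate algebra or the extension theorem --- at the cost of invoking $\pi$--$\lambda$, whereas the cited textbook proof is the one a reader following the paper's reference would actually find. Either way, the lemma is established; there is no gap.
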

	
	\begin{proof}
		\cite[Theorem 1.2.2]{DajaniDirksin}
	\end{proof}
	
	With all of this out of the way, we are ready to demonstrate a characterization of the Haar measure of a countable product of compact metrizable groups.
	
	\begin{Thm}\label{Haar measure of countable product}
	Let $(H_n)_{n \in \mathbb{N}}$ be a sequence of compact metrizable groups, and let $\nu_n$ be the left-invariant (resp. right-invariant) Haar probability measure of $H_n$. Then $\mu = \prod_{n \in \mathbb{N} } \nu_n$ is the left-invariant (resp. right-invariant) Haar probability measure on $G = \prod_{n \in \mathbb{N}} H_n$.
	\end{Thm}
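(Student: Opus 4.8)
The plan is to invoke the uniqueness of the Haar probability measure. The countable product $G = \prod_{n \in \mathbb{N}} H_n$ is compact by Tychonoff's theorem and metrizable as a countable product of metrizable spaces, hence it is a compact Hausdorff group carrying a unique left-invariant (and a unique right-invariant) Borel probability measure. It therefore suffices to show that $\mu = \prod_{n} \nu_n$ is a left-invariant (resp. right-invariant) Borel probability measure and then identify it with the Haar measure by uniqueness.

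First I would verify that $\mu$ is genuinely a Borel measure on $G$. A priori the product measure $\mu$ lives on the product $\sigma$-algebra $\bigotimes_{n} \operatorname{Bo}(H_n)$. But each $H_n$, being compact metrizable, is separable metrizable, so Lemma \ref{Borel algebra of products} gives $\operatorname{Bo}(G) = \bigotimes_{n} \operatorname{Bo}(H_n)$; thus $\mu$ is in fact defined on all of $\operatorname{Bo}(G)$. Since $\mu(G) = \prod_n \nu_n(H_n) = 1$, it is a Borel probability measure.

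Next, for invariance, fix $g = (g_n)_n \in G$ and let $L_g : x \mapsto gx$ be left translation, which is continuous and hence Borel measurable. I claim $L_g$ is measure-preserving. By Lemma \ref{Semi-algebras and pmp}, it is enough to check that $\mu(L_g^{-1} A) = \mu(A)$ for $A$ ranging over the generating semi-algebra $\mathcal{A}$ of cylinder sets from Lemma \ref{Generating semi-algebra}. For $A = B_1 \times \cdots \times B_N \times \prod_{n > N} H_n$, one computes directly that $L_g^{-1} A = (g_1^{-1} B_1) \times \cdots \times (g_N^{-1} B_N) \times \prod_{n > N} H_n$, so the left-invariance of each $\nu_n$ yields
$$\mu(L_g^{-1} A) = \prod_{n = 1}^{N} \nu_n(g_n^{-1} B_n) = \prod_{n = 1}^{N} \nu_n(B_n) = \mu(A).$$
Hence $L_g$ preserves $\mu$ for every $g \in G$, so $\mu$ is left-invariant, and by uniqueness it is the left-invariant Haar probability measure on $G$. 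The right-invariant case is verbatim the same, using right translations and the right-invariance of the $\nu_n$.

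The cylinder computation is entirely routine; the substance of the argument is supplied by the two preceding lemmas, which (i) let us regard the product measure as a bona fide Borel measure and (ii) reduce the invariance check from all Borel sets to the semi-algebra of cylinders. The step that genuinely requires the countability of the index set is (i): for uncountable products the product $\sigma$-algebra is strictly smaller than the Borel $\sigma$-algebra, and the separability hypothesis in Lemma \ref{Borel algebra of products} is exactly what makes the identification $\operatorname{Bo}(G) = \bigotimes_n \operatorname{Bo}(H_n)$ hold, so I expect this to be the only point worth flagging carefully.
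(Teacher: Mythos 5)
Your proposal is correct and takes essentially the same route as the paper's own proof: both identify $\operatorname{Bo}(G)$ with the product $\sigma$-algebra via Lemma \ref{Borel algebra of products}, reduce the invariance check to the cylinder semi-algebra using Lemmas \ref{Generating semi-algebra} and \ref{Semi-algebras and pmp}, perform the same cylinder computation, and conclude by uniqueness of the Haar measure. The only cosmetic difference is that you phrase invariance through preimages $L_g^{-1}A$ (which matches the statement of Lemma \ref{Semi-algebras and pmp} exactly), whereas the paper writes $\mu(gA) = \mu(A)$; the two are equivalent.
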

	
	\begin{proof}
		We will demonstrate the claim for left-invariant Haar measures, since the proof for the right-invariant claim is essentially identical.
		
		We will show that $\mu$ is a $G$-invariant Borel probability measure on $G$, and then conclude from the uniqueness of the Haar measure that $\mu$ must be \emph{the} Haar probability measure on $G$. By Lemma \ref{Borel algebra of products}, we know that $\operatorname{Bo}(G) = \bigotimes_{ n \in \mathbb{N} } \operatorname{Bo}(H_n)$. Set
		$$\mathcal{A} = \left\{ B_1 \times \cdots \times B_N \times \prod_{n = N + 1}^\infty H_n : N \in \mathbb{N}, B_1 \in \operatorname{Bo}(H_1) , \ldots , B_N \in \operatorname{Bo}(H_N) \right\} .$$
		Then by Lemma \ref{Generating semi-algebra}, this $\mathcal{A}$ is a generating semi-algebra for $\bigotimes_{ n \in \mathbb{N} } \operatorname{Bo}(H_n)$.
		
		Now, fix $g = (h_n)_{n \in \mathbb{N} } \in G$. We want to prove that left multiplication on $G$ by $g$ is a $\mu$-preserving transformation. By Lemma \ref{Semi-algebras and pmp}, it will suffice to prove that $\mu(g A) = \mu(A)$ for all $A \in \mathcal{A}$. So fix sets $B_1 \in \operatorname{Bo}(H_1), \ldots, B_N \in \operatorname{Bo}(H_N), N \in \mathbb{N}$. Then
		\begin{align*}
			\mu \left( g \left( B_1 \times \cdots \times B_N \times \prod_{n = N + 1}^\infty H_n \right) \right)	& = \mu \left( h_1 B_1 \times \cdots \times h_N B_N \times \prod_{n = N + 1}^\infty h_n H_n \right) \\
			& = \mu \left( h_1 B_1 \times \cdots \times h_N B_N \times \prod_{n = N + 1}^\infty H_n \right) \\
			& = \nu_1(h_1 B_1) \cdots \nu_N(h_N B_N) \\
			& = \nu_1(B_1) \cdots \nu_N (B_N) \\
			& = \mu \left( B_1 \times \cdots \times B_N \times \prod_{n = N + 1}^\infty H_n \right) .
		\end{align*}
		
		We have thus established that $\mu$ is $G$-invariant on $\mathcal{A}$, and so we can infer that $\mu$ is $G$-invariant for all of $\operatorname{Bo}(G)$.
	\end{proof}
	
	For the remainder of this section, let $G$ be a compact abelian metrizable group. Let $S : G^{\mathbb{N}_0} \twoheadrightarrow G^{\mathbb{N}_0}$ be the left shift
	$$S (g_n)_{n = 0}^\infty = (g_{n + 1})_{n = 0}^\infty .$$
	Then $S$ is a continuous surjective group endomorphism of $G^{\mathbb{N}_0}$, and given a continuous group homomorphism $T : G \to G$, let $\widehat{T} : G^{\mathbb{N}_0} \to G^{\mathbb{N}_0}$ be the map
	$$\widehat{T} : (g_n)_{n = 0}^\infty \mapsto (T g_n)_{n = 0}^\infty .$$
	We can observe that $S$ and $\widehat{T}$ commute, since
	\begin{align*}
		S \widehat{T} (g_n)_{n = 0}^\infty	& = S \left( T g_n \right)_{n = 0}^\infty \\
		& = \left( T g_{n + 1} \right)_{n = 0}^\infty \\
		& = \widehat{T} (g_{n + 1})_{n = 0}^\infty \\
		& = \widehat{T} S (g_n)_{n = 0}^\infty .
	\end{align*}
	
	\begin{Lem}\label{Lemma towards DP}
		Let $T : G \twoheadrightarrow G$ be a continuous surjective group endomorphism of $G$, and fix $\ell \in \mathbb{N}$. Then $\operatorname{id}_{G^{\mathbb{N}_0}} - S^\ell \widehat{T}$ is also surjective.
	\end{Lem}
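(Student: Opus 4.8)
The plan is to reduce the surjectivity of the abstract operator $\operatorname{id}_{G^\omega} - S^\ell \widehat{T}$ to an explicit coordinatewise system of equations, and then to solve that system using nothing more than the surjectivity of $T$. First I would unwind the definitions of $S$ and $\widehat{T}$. Since $\widehat{T}(x_n)_{n=0}^\infty = (T x_n)_{n=0}^\infty$ and $S^\ell$ shifts indices down by $\ell$, we obtain the coordinate formula
$$\left( \operatorname{id}_{G^\omega} - S^\ell \widehat{T} \right) (x_n)_{n=0}^\infty = \left( x_n - T x_{n+\ell} \right)_{n=0}^\infty .$$
Thus, given an arbitrary target $(y_n)_{n=0}^\infty \in G^\omega$, surjectivity amounts to producing a sequence $(x_n)_{n=0}^\infty$ solving $x_n - T x_{n+\ell} = y_n$ for every $n \geq 0$.

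The second step is to observe that this system couples only indices lying in the same residue class modulo $\ell$ (here I use $\ell \geq 1$). Fixing $r \in \{0, 1, \ldots, \ell - 1\}$ and writing $a_k = x_{r + k\ell}$ and $b_k = y_{r+k\ell}$, the equations collapse into the single recurrence $a_k = b_k + T a_{k+1}$ for $k \geq 0$, and solving all $\ell$ of these recurrences independently reconstructs the full sequence $(x_n)_{n=0}^\infty$. Equivalently, this decomposition identifies $\operatorname{id}_{G^\omega} - S^\ell \widehat{T}$ with an $\ell$-fold direct product of copies of the $\ell = 1$ operator $\operatorname{id}_{G^\omega} - S \widehat{T}$, so it would suffice to treat that case.

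The key step is then to solve $a_k = b_k + T a_{k+1}$, which is where surjectivity of $T$ enters. The recurrence runs \emph{backwards}, expressing $a_k$ in terms of $a_{k+1}$, so naive forward propagation from a fixed initial value is not immediate. The resolution is to recast it as the inversion problem $T a_{k+1} = a_k - b_k$ and build the sequence inductively: set $a_0 := 0$ (any element of $G$ works), and given $a_k$, invoke the surjectivity of $T$ to choose some $a_{k+1} \in G$ with $T a_{k+1} = a_k - b_k$. By construction the resulting $(a_k)_{k \geq 0}$ satisfies every equation $a_k = b_k + T a_{k+1}$. Assembling these solutions across all residues $r$ produces a preimage of $(y_n)_{n=0}^\infty$, establishing surjectivity.

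I anticipate the only genuine subtlety is the direction of the recurrence; once it is rewritten as $T a_{k+1} = a_k - b_k$, the surjectivity hypothesis makes each step solvable, and no continuity or measure-theoretic input is needed, since we require only surjectivity of the underlying homomorphism of abelian groups. I would also flag explicitly that the argument uses $\ell \geq 1$: for $\ell = 0$ the operator degenerates to the coordinatewise $\operatorname{id}_G - T$, which need not be surjective.
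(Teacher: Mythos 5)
Your proof is correct and takes essentially the same route as the paper: both arguments leave the first $\ell$ coordinates of the preimage free and then recurse forward, invoking surjectivity of $T$ at each step to solve $T x_{n+\ell} = x_n - y_n$; your residue-class-mod-$\ell$ decomposition is just a repackaging of the paper's single recursion over all indices. Your closing caveat about $\ell = 0$ is well taken --- the statement's ``$\ell \in \omega$'' nominally allows $\ell = 0$, where the claim fails (e.g.\ $T = \operatorname{id}_G$ gives the zero operator), but the paper's own recursion likewise presupposes $\ell \geq 1$, which is all that is needed in the application in Lemma \ref{Shifts have DP}.
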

	
	\begin{proof}
		Fix $g = (g_n)_{n = 0}^\infty \in G^{\mathbb{N}_0}$, and construct a sequence $g' = \left( g_n' \right)_{n = 0}^\infty \in G^{ \mathbb{N}_0 }$ recursively as follows. First, set $g_n' = g_n$ for $n = 0, \ldots, \ell - 1$. Then for $N > \ell - 1$, assuming that $g_0', g_1', g_2', \ldots, g_N' \in G$ have been chosen such that 
		\begin{align*}
			g_n ' - T g_{n + \ell} ' & = g_n	& (\textrm{for $n = 0, 1 , 2 , \ldots, N - \ell$}) ,
		\end{align*}
		choose $g_{N + 1} ' \in G$ such that
		$$T g_{N + 1} ' = g_{N + 1 - \ell} - g_{N + 1 - \ell} ' ,$$
		which exists because $T$ is surjective. Then
		$$g_{N + 1 - \ell} ' - T g_{N + 1} ' = g_{N + 1 - \ell} .$$
		Continuing this process gives us a sequence $g' = \left( g_n' \right)_{n = 0}^\infty$ such that
		$$\left( \operatorname{id}_{G^{\mathbb{N}_0} } - S^\ell \widehat{T} \right) g' = g .$$
	\end{proof}
	
	\begin{Lem}\label{Shifts have DP}
		Let $(T_n)_{n = 1}^\infty$ be a sequence of surjective group homomorphisms $T_n : G \twoheadrightarrow G$, and let $(\ell_n)_{n = 1}^\infty$ be a sequence of natural numbers. Then the sequence $\left( S^{\ell_n} \widehat{T}_n \right)_{n = 1}^\infty$ has the Difference Property on $G^{\mathbb{N}_0}$.
	\end{Lem}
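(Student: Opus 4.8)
The plan is to reduce the Difference Property for the composite maps $\Phi_n = (S^{\ell_n}\widehat{T}_n)\cdots(S^{\ell_1}\widehat{T}_1)$ to the single-map statement already established in Lemma \ref{Lemma towards DP}. The first step is bookkeeping. Since $S$ commutes with every $\widehat{T}_i$ (as computed above), and since $\widehat{T}_i \widehat{T}_j = \widehat{T_i T_j}$ because the maps $\widehat{\cdot}$ act coordinatewise, I can slide all of the shift factors to the left while keeping the $\widehat{T}$'s in their original order. Writing $L_n = \sum_{i = 1}^n \ell_i$ and $\Psi_n = T_n T_{n - 1}\cdots T_1$, this collapses the composite into the clean formula
$$\Phi_n = S^{L_n}\,\widehat{\Psi_n} .$$
I emphasize that no commutativity among the $T_i$ themselves is required here: only the commutation of $S$ with each $\widehat{T}_i$ is used, and the factors $\widehat{T}_n \cdots \widehat{T}_1 = \widehat{T_n \cdots T_1}$ retain their order.

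Next, fix $n \neq m$; replacing $\Phi_n - \Phi_m$ by its negative if necessary (negation being a bijection of $G^\omega$), I may assume $n > m$. Set $R = T_n T_{n - 1}\cdots T_{m + 1}$, so that $\Psi_n = R\,\Psi_m$ and hence $\widehat{\Psi_n} = \widehat{R}\,\widehat{\Psi_m}$. Factoring $\widehat{\Psi_m}$ out on the right and $S^{L_m}$ out on the left — both legitimate because each factor is a group homomorphism of the abelian group $G^\omega$, so composition distributes over the difference — I obtain
$$\Phi_n - \Phi_m = S^{L_m}\left( S^{L_n - L_m}\widehat{R} - \operatorname{id}_{G^\omega} \right)\widehat{\Psi_m} .$$

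The conclusion then follows by reading $\Phi_n - \Phi_m$ as a composition of three surjections. The outer factor $S^{L_m}$ is surjective because $S$ is. The map $\widehat{\Psi_m}$ is surjective, since $\Psi_m$ is a composite of surjective homomorphisms and $\widehat{\cdot}$ preserves surjectivity coordinatewise. For the middle factor, observe that $R$ is likewise surjective and $L_n - L_m = \ell_{m + 1} + \cdots + \ell_n \in \omega$, so Lemma \ref{Lemma towards DP} shows $\operatorname{id}_{G^\omega} - S^{L_n - L_m}\widehat{R}$ is surjective; negating gives surjectivity of $S^{L_n - L_m}\widehat{R} - \operatorname{id}_{G^\omega}$. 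A composition of surjections is surjective, so $\Phi_n - \Phi_m$ is surjective, which is exactly the Difference Property.

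The only genuine subtlety — and the step I expect to require the most care — is the algebraic manipulation: one must verify carefully that the difference of two homomorphisms of $G^\omega$ is again a homomorphism, that left and right composition distribute over such differences, and that the shift powers may be commuted past the $\widehat{T}_i$ without disturbing the (possibly non-commuting) order of the underlying $T_i$. Once the factorization $\Phi_n - \Phi_m = S^{L_m}\left( S^{L_n - L_m}\widehat{R} - \operatorname{id}_{G^\omega} \right)\widehat{\Psi_m}$ is secured, the surjectivity is an immediate application of Lemma \ref{Lemma towards DP} together with the stability of surjectivity under composition.
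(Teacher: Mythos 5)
Your proof is correct and follows essentially the same route as the paper: commute the shifts past the coordinatewise maps, collapse the composites to $S^{L_n}\widehat{\Psi_n}$, factor the difference so that a term of the form $\operatorname{id}_{G^\omega} - S^{\ell}\widehat{R}$ appears, and conclude via Lemma \ref{Lemma towards DP} together with stability of surjectivity under composition. If anything, your factorization $\Phi_n - \Phi_m = S^{L_m}\left( S^{L_n - L_m}\widehat{R} - \operatorname{id}_{G^\omega} \right)\widehat{\Psi_m}$ is slightly more careful than the paper's displayed computation, which inadvertently drops the (surjective) factor $\widehat{T}_m \cdots \widehat{T}_1$ when it factors out $S^{\Lambda_m}$; your version keeps that factor explicit and justifies its surjectivity, so no correction is needed.
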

	
	\begin{proof}
		Set $\Lambda_n = \ell_1 + \cdots + \ell_n$. Let $m, n \in \mathbb{N} , m < n$. Then
		\begin{align*}
			\left( \left( S^{\ell_m} \widehat{T}_m \right) \cdots \left( S^{\ell_1} \widehat{T}_1 \right) \right) - \left( \left( S^{\ell_n} \widehat{T}_n \right) \cdots \left( S^{\ell_1} \widehat{T}_1 \right) \right)	& = \left( S^{\Lambda_m} \widehat{T}_m \cdots \widehat{T}_1 \right) - \left( S^{\Lambda_n} \widehat{T}_n \cdots \widehat{T}_1 \right) \\
			& = \left( \widehat{T}_m \cdots \widehat{T}_1 S^{\Lambda_m} \right) - \left( \widehat{T}_n \cdots \widehat{T}_1 S^{\Lambda_n} \right) \\
			& = \left( \operatorname{id}_{G^{\mathbb{N}_0} } - \widehat{T}_n \widehat{T}_{n - 1} \cdots \widehat{T}_{m + 1} S^{\Lambda_n - \Lambda_m} \right) S^{\Lambda_m} \widehat{\Phi_m} \\
			& = \left( \operatorname{id}_{G^{\mathbb{N}_0} } - \widehat{\tau(n, m)} S^{\Lambda_n - \Lambda_m} \right) S^{\Lambda_m} \widehat{\Phi_m} \\
			& = \left( \operatorname{id}_{G^{\mathbb{N}_0} } - S^{\Lambda_n - \Lambda_m} \widehat{\tau(n, m)} \right) S^{\Lambda_m} \widehat{\Phi_m}
		\end{align*}
		where $\tau(n, m) = T_n T_{n - 1} \cdots T_{m + 1}$. By Lemma \ref{Lemma towards DP}, it follows that $\operatorname{id}_{G^{\mathbb{N}_0} } - \widehat{\tau(n, m)} S^{\Lambda_n - \Lambda_m}$ is surjective. Therefore $\left( \left( S^{\ell_m} \widehat{T}_m \right) \cdots \left( S^{\ell_1} \widehat{T}_1 \right) \right) - \left( \left( S^{\ell_n} \widehat{T}_n \right) \cdots \left( S^{\ell_1} \widehat{T}_1 \right) \right) = \left( \operatorname{id}_{G^{\mathbb{N}_0} } - S^{\Lambda_n - \Lambda_m} \widehat{\tau(n, m)} \right) S^{\Lambda_m}$ is surjective, since it is a composition of surjections.
	\end{proof}
	
	\begin{Cor}\label{Sequences are almost surely uniformly distributed}
		Let $(T_n)_{n = 1}^\infty$ be a sequence of surjective group homomorphisms $T_n : G \twoheadrightarrow G$, and let $(\ell_n)_{n = 1}^\infty$ be a sequence of natural numbers. Set $\Lambda_n = \ell_1 + \cdots + \ell_n$. Then for almost every $g \in G$, the sequence $\left( S^{\Lambda_n} \widehat{T}_n \cdots \widehat{T}_1 g \right)_{n = 0}^\infty$ is uniformly distributed in $G^{\mathbb{N}_0}$.
	\end{Cor}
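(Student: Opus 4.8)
The plan is to recognize this corollary as an immediate application of Theorem \ref{Difference Property Uniformly Distributed} to the compact group $G^\omega$, with the Difference Property supplied by Lemma \ref{Shifts have DP}. First I would set $\Phi_n = \left( S^{\ell_n} \widehat{T}_n \right) \cdots \left( S^{\ell_1} \widehat{T}_1 \right)$, with the convention $\Phi_0 = \operatorname{id}_{G^\omega}$, and observe—exactly as in the proof of Lemma \ref{Shifts have DP}—that because $S$ commutes with each $\widehat{T}_j$, all of the shifts can be collected on the left to yield $\Phi_n = S^{\Lambda_n} \widehat{T}_n \cdots \widehat{T}_1$. Thus the sequence appearing in the statement is precisely $\left( \Phi_n g \right)_{n = 0}^\infty$, and it suffices to show that this sequence is uniformly distributed in $G^\omega$ for almost every $g \in G^\omega$.

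Next I would verify that $G^\omega$ meets the hypotheses of Theorem \ref{Difference Property Uniformly Distributed}, namely that it is a compact abelian metrizable group. Compactness is Tychonoff's theorem, commutativity is inherited coordinatewise, and metrizability holds because a countable product of metrizable spaces is metrizable. Moreover, Theorem \ref{Haar measure of countable product} identifies the Haar probability measure on $G^\omega$ with the product of copies of the Haar measure on $G$, so that the phrase ``almost every $g$'' is unambiguous and refers to this product measure.

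By Lemma \ref{Shifts have DP}, the generating sequence $\left( S^{\ell_n} \widehat{T}_n \right)_{n = 1}^\infty$ has the Difference Property, i.e.\ $\Phi_n - \Phi_m$ is surjective whenever $n \neq m$. In particular $\Phi_n \neq \Phi_m$ for $n \neq m$, since otherwise $\Phi_n - \Phi_m$ would be the zero map rather than a surjection onto the nontrivial group $G^\omega$; this furnishes the distinctness hypothesis of Theorem \ref{Difference Property Uniformly Distributed}. Applying that theorem to the endomorphisms $\Phi_n$ of $G^\omega$ then produces a full-measure set of $g \in G^\omega$ for which $\left( \Phi_n g \right)_{n = 0}^\infty = \left( S^{\Lambda_n} \widehat{T}_n \cdots \widehat{T}_1 g \right)_{n = 0}^\infty$ is uniformly distributed, which is the assertion.

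There is no serious obstacle here, as the corollary is engineered to package the preceding lemmas. The only points requiring genuine care are the bookkeeping identification $\Phi_n = S^{\Lambda_n} \widehat{T}_n \cdots \widehat{T}_1$ (which rests on the commutativity of $S$ and $\widehat{T}_j$) and the routine but necessary checks that $G^\omega$ remains compact abelian metrizable, so that the hypotheses of Theorem \ref{Difference Property Uniformly Distributed} are legitimately satisfied rather than merely assumed.
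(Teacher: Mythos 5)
Your proposal is correct and takes exactly the same route as the paper, whose entire proof is the single line ``Apply Lemma \ref{Shifts have DP} and Theorem \ref{Difference Property Uniformly Distributed}.'' Your additional verifications---that $G^\omega$ is compact abelian metrizable, that the Haar measure on $G^\omega$ is the product measure (Theorem \ref{Haar measure of countable product}), that the $\Phi_n$ are distinct, and the bookkeeping identity $\Phi_n = S^{\Lambda_n} \widehat{T}_n \cdots \widehat{T}_1$---simply make explicit the details the paper leaves to the reader.
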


	\begin{proof}
	Apply Lemma \ref{Shifts have DP} and Theorem \ref{Difference Property Uniformly Distributed}.
	\end{proof}
	
	\begin{Thm}\label{Shifts and uniform distribution}
		Let $(T_n)_{n = 1}^\infty$ be a sequence of surjective group homomorphisms $T_n : G \twoheadrightarrow G$, and let $(\ell_n)_{n = 1}^\infty$ be a sequence of natural numbers. Set $\Lambda_n = \ell_1 + \cdots + \ell_n$. Then for almost every sequence $(g_n)_{n = 0}^\infty \in G^{\mathbb{N}_0}$, the sequence $\left( T_n \cdots T_1 g_{\Lambda_n} \right)_{n = 0}^\infty$ is uniformly distributed in $G$.
	\end{Thm}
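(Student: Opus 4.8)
The plan is to realize the $G$-valued sequence $\left( T_n \cdots T_1 g_{\Lambda_n} \right)_{n = 0}^\infty$ as a coordinate projection of the $G^\omega$-valued sequence whose almost-sure uniform distribution is already furnished by Corollary \ref{Sequences are almost surely uniformly distributed}, and then to push that uniform distribution forward through the projection by means of Lemma \ref{Group homomorphisms preserve uniform distribution}.

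First I would record the key coordinate computation. Write $g = (g_n)_{n = 0}^\infty \in G^\omega$ and let $\pi_0 : G^\omega \twoheadrightarrow G$ denote the projection onto the zeroth coordinate, which is a continuous surjective group homomorphism. Since each $\widehat{T}_j$ acts coordinatewise, the $m$-th coordinate of $\widehat{T}_n \cdots \widehat{T}_1 g$ is $T_n \cdots T_1 g_m$; applying the left shift $S^{\Lambda_n}$ then reads off the coordinate indexed $\Lambda_n$, so that
$$\pi_0 \left( S^{\Lambda_n} \widehat{T}_n \cdots \widehat{T}_1 g \right) = T_n \cdots T_1 g_{\Lambda_n} .$$
(In particular the $n = 0$ term is $g_0$, consistent with the empty products and $\Lambda_0 = 0$.) This identity rests on the commutation of $S$ with the $\widehat{T}_j$ established at the start of this section, which is precisely what lets us collapse $\left( S^{\ell_n} \widehat{T}_n \right) \cdots \left( S^{\ell_1} \widehat{T}_1 \right)$ into $S^{\Lambda_n} \widehat{T}_n \cdots \widehat{T}_1$.

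Next I would invoke Corollary \ref{Sequences are almost surely uniformly distributed}, which tells us that for almost every $g \in G^\omega$ the sequence $\left( S^{\Lambda_n} \widehat{T}_n \cdots \widehat{T}_1 g \right)_{n = 0}^\infty$ is uniformly distributed in $G^\omega$. Fixing any such $g$ and applying Lemma \ref{Group homomorphisms preserve uniform distribution} to the continuous surjective homomorphism $\pi_0$, the image sequence
$$\left( \pi_0 \left( S^{\Lambda_n} \widehat{T}_n \cdots \widehat{T}_1 g \right) \right)_{n = 0}^\infty = \left( T_n \cdots T_1 g_{\Lambda_n} \right)_{n = 0}^\infty$$
is uniformly distributed in $G$. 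Because the set of admissible $g$ has full measure in $G^\omega$, this establishes the theorem.

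I do not expect any serious obstacle here: the result is, at bottom, a transport of the already-proven uniform distribution in $G^\omega$ down to a single coordinate. The only point demanding genuine care is the coordinate bookkeeping of the first step, namely verifying that the zeroth coordinate of $S^{\Lambda_n} \widehat{T}_n \cdots \widehat{T}_1 g$ equals exactly $T_n \cdots T_1 g_{\Lambda_n}$; once that identity is in hand, Corollary \ref{Sequences are almost surely uniformly distributed} and Lemma \ref{Group homomorphisms preserve uniform distribution} close the argument immediately.
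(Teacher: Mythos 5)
Your proposal is correct and follows essentially the same route as the paper: project the almost surely uniformly distributed $G^\omega$-valued sequence from Corollary \ref{Sequences are almost surely uniformly distributed} onto the zeroth coordinate and invoke Lemma \ref{Group homomorphisms preserve uniform distribution}, with the coordinate identity $\pi_0\left( S^{\Lambda_n} \widehat{T}_n \cdots \widehat{T}_1 g \right) = T_n \cdots T_1 g_{\Lambda_n}$ closing the argument. Your additional care in spelling out that identity via the commutation of $S$ with the $\widehat{T}_j$ is a welcome elaboration of a step the paper states without comment.
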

	
	\begin{proof}
	Let $\pi : G^{\mathbb{N}_0} \twoheadrightarrow G$ be the projection onto the first term $\pi : (g_n)_{n = 0}^\infty \mapsto g_0$. By Corollary \ref{Sequences are almost surely uniformly distributed}, for almost every $(g_n)_{n = 0}^\infty \in G^{\mathbb{N}_0}$, the sequence $\left( S^{\Lambda_n} \widehat{T}_n \cdots \widehat{T}_1 g \right)_{n = 0}^\infty$ is uniformly distributed in $G^{\mathbb{N}_0}$.
	
	Now let $(g_n)_{n = 0}^\infty \in G^{\mathbb{N}_0}$ be such that the sequence $\left( S^{\Lambda_n} \widehat{T}_n \cdots \widehat{T}_1 g \right)_{n = 0}^\infty$ is uniformly distributed in $G^{\mathbb{N}_0}$. Then by Lemma \ref{Group homomorphisms preserve uniform distribution}, the sequence $\left( \pi \left( S^{\Lambda_n} \widehat{T}_n \cdots \widehat{T}_1 g \right) \right)_{n = 0}^\infty$ is uniformly distributed in $G$. But
	\begin{align*}
	\pi \left( S^{\Lambda_n} \widehat{T}_n \cdots \widehat{T}_1 g \right)	& = T_n \cdots T_1 g_{\Lambda_n} .
	\end{align*}
	\end{proof}

	\section{Topologically generic behaviors of random spatial-temporal differentiation problems}\label{Topological results}
	
	We can interpret Theorem \ref{Difference Property Uniformly Distributed} as saying that if $(T_n)_{n = 1}^\infty$ is a sequence of continuous group endomorphisms of $G$ with the Difference Property, and $\Phi_n = T_n T_{n - 1} \cdots T_1$, then the property of $x \in G$ that $\left( \Phi_n x \right)_{n = 0}^\infty$ is uniformly distributed is "probabilistically generic", in the sense that the set of such $x$ has full measure. In light of Theorem \ref{Main Theorem}, we can infer that if $B_k(x)$ is a sequence of balls around $x$ with radii going to $0$ sufficiently fast, then it is probabilistically generic that
	\begin{align*}
		\alpha_{B_k(x)} \left( \frac{1}{k} \sum_{i = 0}^{k - 1} \Phi_i f \right)	& \stackrel{k \to \infty}{\to} \int f \mathrm{d} \mu	& (\forall f \in C(G)) .
	\end{align*}
	In other words, we can see Theorem \ref{Main Theorem} as a statement about a probabilistically generic spatial-temporal differentiation of a certain kind.
	
	However, if we try to look at \emph{topologically} generic behaviors, the story changes. Instead, in a sense that we will make precise momentarily, the topologically generic behavior is that the sequence $\left( \alpha_{B_k(x)} \left( \frac{1}{k} \sum_{i = 0}^{k - 1} \Phi_i f \right) \right)_{k = 1}^\infty$ is divergent for some $f \in C(G)$.
	
	\begin{Def}
	Let $X$ be a compact metrizable space, and $S \subseteq X$ a subset. We say that $A \subseteq X$ is \emph{nowhere dense} if for every nonempty open $\mathcal{O} \subseteq X$, there exists a nonempty open subset $W \subseteq \mathcal{O}$ such that $W \cap A = \emptyset$. A subset $A \subseteq X$ is called \emph{meager} if there exists a sequence $(A_n)_{n \in \mathbb{N}}$ of nowhere dense subsets of $X$ such that $A \subseteq \bigcup_{n \in \mathbb{N}} A_n$. We call a subset $B \subseteq X$ \emph{comeager} if $X \setminus B$ is meager. A comeager set is sometimes called \emph{Baire generic}.
	\end{Def}

	Our goal here is to show that the behavior described in Theorem \ref{Difference Property Uniformly Distributed} is -from this topological perspective- exceptional in the sense that the set of such $x \in G$ is meager.
	
	\begin{Thm}\label{Mance Theorem}
	Let $G = (G, \rho)$ be a compact abelian metrizable group with infinitely many elements and metric $\rho$. Let $(T_n)_{n = 1}^\infty$ be a sequence of continuous, surjective group endomorphisms $T_n : G \to G$. Let $\Phi_n = T_n T_{n - 1} \cdots T_1, \Phi_0 = \operatorname{id}_G$. Suppose that $\bigcup_{m = 1}^\infty \ker \Phi_m$ is dense in $G$. Then the set of $x \in G$ such that $\left( \Phi_n x \right)_{n = 0}^\infty$ is uniformly distributed is meager.
	\end{Thm}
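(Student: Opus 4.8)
The plan is to reduce uniform distribution to a single-character condition via the Weyl Criterion (Proposition \ref{Weyl Criterion}) and then show that even this weakened condition already cuts out only a meager set. Since $G$ has infinitely many elements, it has an element $g \neq 1$, and because the characters of a compact abelian group separate points, there is a nontrivial $\gamma \in \hat{G}$ with $\gamma(g) \neq 1$. Fix such a $\gamma$ and write
$$S_\gamma(k, x) = \frac{1}{k} \sum_{i = 0}^{k - 1} \gamma(\Phi_i x) .$$
If $(\Phi_n x)_{n = 0}^\infty$ is uniformly distributed, then by the Weyl Criterion $S_\gamma(k, x) \to 0$; hence the uniformly distributed set is contained in $Z := \{ x \in G : S_\gamma(k, x) \to 0 \}$, and it suffices to prove that $Z$ is meager.

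The key computation exploits the density hypothesis. Suppose $x \in \ker \Phi_m$ for some $m$. Since $\Phi_n = (T_n \cdots T_{m + 1}) \Phi_m$ for $n \geq m$, and each $T_j$ is a homomorphism and so fixes the identity, it follows that $\Phi_n x = 1$ for all $n \geq m$. Consequently $\gamma(\Phi_i x) = 1$ for all $i \geq m$, and
$$S_\gamma(k, x) = \frac{1}{k} \left( \sum_{i = 0}^{m - 1} \gamma(\Phi_i x) + (k - m) \right) \stackrel{k \to \infty}{\to} 1 .$$
Thus on the dense set $D := \bigcup_{m = 1}^\infty \ker \Phi_m$ the averages $S_\gamma(k, x)$ converge to $1$, far from $0$.

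Next I would express $Z$ as a countable union of closed nowhere dense sets. For $N \in \mathbb{N}$ set $F_N := \{ x \in G : |S_\gamma(k, x)| \leq \tfrac{1}{2} \text{ for all } k \geq N \}$. Each map $x \mapsto S_\gamma(k, x)$ is continuous, being a finite sum of compositions of the continuous maps $\gamma$ and $\Phi_i$, so each $F_N$ is closed. If $x \in Z$, then $S_\gamma(k, x) \to 0$ forces $|S_\gamma(k, x)| \leq \tfrac{1}{2}$ for all large $k$, so $x \in F_N$ for some $N$; hence $Z \subseteq \bigcup_N F_N$. To see that $F_N$ is nowhere dense it suffices, since $F_N$ is closed, to show it has empty interior: any nonempty open $\mathcal{O} \subseteq G$ meets the dense set $D$ at some point $x_0$, and since $S_\gamma(k, x_0) \to 1$ there is some $k \geq N$ with $|S_\gamma(k, x_0)| > \tfrac{1}{2}$, so $x_0 \notin F_N$ and $\mathcal{O} \not\subseteq F_N$. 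Therefore $Z \subseteq \bigcup_N F_N$ is meager, and the uniformly distributed set, being contained in $Z$, is meager as well.

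The argument is short once the right reduction is in place; the step I expect to carry the real content is the identification of the nowhere dense building blocks $F_N$ together with the observation that kernel points drive $S_\gamma$ to $1$, which is where the density of $\bigcup_m \ker \Phi_m$ enters decisively. The only mild subtleties are justifying the existence of a nontrivial character and the continuity needed to make each $F_N$ closed, and both are routine within the framework already established.
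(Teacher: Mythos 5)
Your proof is correct, but it takes a genuinely different route from the paper's. The paper never invokes the Weyl criterion in this argument: it chooses $\delta$ with $\mu(B(0,\delta)) < \tfrac{1}{2}$ (this is where the hypothesis of infinitely many elements enters, via Lemma \ref{Small balls}), builds a real-valued bump function $f$ with $\chi_{B(0,\delta/2)} \leq f \leq \chi_{B(0,\delta)}$, defines $A_K$ as the set of $x$ whose orbit averages of $f$ stay below $\tfrac{2}{3}$ for all $k \geq K$, and proves each $A_K$ nowhere dense by a dedicated lemma (Lemma \ref{Concentrated at zero}): density of $\bigcup_m \ker \Phi_m$ together with continuity of the transition maps $\tau(m+\ell,m)$ produces, inside any nonempty open $\mathcal{O}$, a nonempty \emph{open} set $W$ and a time $L \geq K$ on which a proportion at least $\tfrac{2}{3}$ of the initial orbit segment of every $x \in W$ lies in $B(0,\delta/2)$, forcing the average of $f$ up to at least $\tfrac{2}{3}$. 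The paper must produce this open set $W$ because its sets $A_K$ are defined by strict inequalities and are not obviously closed, so nowhere density has to be checked against arbitrary open sets. Your key simplification is structural: by reducing to a single nontrivial character $\gamma$ (whose existence is indeed routine from Peter--Weyl, since characters of a compact abelian group separate points), your sets $F_N$ are defined by non-strict inequalities over $k \geq N$ and are therefore closed, so nowhere density follows from empty interior, which is witnessed by the dense set $\bigcup_m \ker \Phi_m$ of points where $S_\gamma(k,\cdot) \to 1$; no open neighborhood of bad points is required, and the measure estimate of Lemma \ref{Small balls} is bypassed entirely (you only need $G$ nontrivial rather than infinite). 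What the paper's heavier machinery buys is reuse: Lemma \ref{Concentrated at zero}, with its genuinely open set $W$, is invoked again in Theorem \ref{Stronger Mance Theorem}, where an open set $W_1$ of points with large averages must be intersected with a second open set $W_2$ around a uniformly distributed point; a merely dense set of bad points, which is all your closed-set argument needs here, would not suffice for that later intersection argument.
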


Before we can prove Theorem \ref{Mance Theorem}, we need to prove a few technical lemmas.

	\begin{Lem}\label{Small balls}
	Let $G = (G, \rho)$ be a compact metrizable group with infinitely many elements and $\epsilon > 0$. Then there exists $\delta > 0$ such that $\mu(B(0, \delta)) < \epsilon$.
	\end{Lem}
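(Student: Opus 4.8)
The plan is to read the (evidently abbreviated) conclusion ``$B(0, \delta) < \epsilon$'' as the assertion $\mu(B(0, \delta)) < \epsilon$, i.e. that balls centered at the identity can be made to have arbitrarily small Haar measure. The strategy rests on two ingredients: first, that singletons are $\mu$-null precisely because $G$ is infinite; second, that the measure of a shrinking sequence of balls converges to the measure of their intersection by continuity from above.

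For the first ingredient, I would argue as follows. Since $\mu$ is the translation-invariant Haar probability measure, $\mu(\{x\}) = \mu(\{0\})$ for every $x \in G$. Because $G$ has infinitely many elements, for each $n$ I can select $n$ distinct points $x_1, \ldots, x_n$; these are pairwise disjoint singletons, so $n \, \mu(\{0\}) = \mu(\{x_1, \ldots, x_n\}) \leq \mu(G) = 1$, which gives $\mu(\{0\}) \leq \frac{1}{n}$. Letting $n \to \infty$ forces $\mu(\{0\}) = 0$.

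For the second ingredient, I would consider the decreasing sequence of open balls $B(0, \frac{1}{k})$ for $k \in \mathbb{N}$. Each is open, hence Borel and $\mu$-measurable, and $\bigcap_{k} B(0, \frac{1}{k}) = \{y \in G : \rho(0, y) = 0\} = \{0\}$ since $\rho$ is a genuine metric. As $\mu$ is a finite measure, continuity from above yields $\mu(B(0, \frac{1}{k})) \to \mu(\{0\}) = 0$. Given $\epsilon > 0$, I then pick $k$ with $\mu(B(0, \frac{1}{k})) < \epsilon$ and set $\delta = \frac{1}{k}$.

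There is no serious obstacle here; the lemma is a routine consequence of translation-invariance together with continuity of measure. The only step that genuinely invokes the hypothesis that $G$ is infinite is the vanishing of $\mu(\{0\})$: on a finite group every singleton would have positive measure $\frac{1}{|G|}$, and the conclusion would fail for small $\epsilon$. I would be mildly careful only to record that the argument uses nothing about $\rho$ beyond the facts that it induces the topology on $G$ and separates points, so no invariance of the metric is required.
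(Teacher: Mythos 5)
Your proposal is correct and follows essentially the same route as the paper: a decreasing sequence of balls $B(0, 1/k)$ whose intersection is $\{0\}$, together with continuity of measure from above for the finite measure $\mu$. The only difference is that you spell out why $\mu(\{0\}) = 0$ (translation-invariance plus infinitude of $G$), a fact the paper simply asserts, so your write-up is if anything slightly more complete.
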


\begin{proof}
	Consider the sequence $\left(B(0, 1/n)\right)_{n = 1}^\infty$. Then $B(0, 1) \supseteq B(0, 1/2) \supseteq B(0, 1/3) \supseteq \cdots$, and $\bigcap_{n = 1}^\infty B(0, 1/n) = \{0\}$. But $\mu(\{0\}) = 0$, so by continuity of measure, it follows that $\lim_{n \to \infty} \mu(B(0, 1/n)) = 0$. Thus in particular there exists $N \in \mathbb{N}$ such that $\mu(B(0, 1/N)) < \epsilon$. Let $\delta = 1/N$.
\end{proof}

\begin{Lem}\label{Concentrated at zero}
Let $G = (G, \rho)$ be a compact abelian metrizable group with infinitely many elements and metric $\rho$. Let $(T_n)_{n = 1}^\infty$ be a sequence of continuous, surjective group endomorphisms $T_n : G \to G$. Let $\Phi_n = T_n T_{n - 1} \cdots T_1, \Phi_0 = \operatorname{id}_G$. Suppose that $\bigcup_{m = 1}^\infty \ker \Phi_m$ is dense in $G$. Let $\epsilon, \delta > 0, N_1 \in \mathbb{N}$, and let $\mathcal{O} \subseteq G$ be a nonempty open set. Then there exists a nonempty open set $W \subseteq \mathcal{O}$ and $L \geq N_1$ such that if $x \in W$, then
$$\frac{\# \left\{ j \leq L - 1 : \Phi_j x \in B(0, \delta / 2) \right\}}{L} \geq 1 - \epsilon .$$
\end{Lem}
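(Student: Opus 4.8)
The plan is to exploit the fact that any point lying in some $\ker \Phi_m$ is sent to the identity by every later map $\Phi_j$, and then to pass from a single such point to a whole open neighborhood by continuity. First I would record the key algebraic observation: since each $T_i$ is a group homomorphism, so is each $\Phi_j$, and hence if $x \in \ker \Phi_m$ then for every $j \geq m$ we have $\Phi_j x = (T_j \cdots T_{m+1})(\Phi_m x) = (T_j \cdots T_{m+1})(0) = 0$. In particular such an $x$ satisfies $\Phi_j x = 0 \in B(0, \delta/2)$ for all $j \geq m$, so the only iterates that can possibly fail to land in $B(0,\delta/2)$ are the finitely many indices $j = 0, 1, \ldots, m-1$.

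Next I would use the density hypothesis to seed the construction. Because $\bigcup_{m=1}^\infty \ker \Phi_m$ is dense and $\mathcal{O}$ is nonempty and open, I can pick $x_0 \in \mathcal{O} \cap \ker \Phi_m$ for some $m \in \mathbb{N}$ (note $m \geq 1$). Having fixed $m$, I may assume $\epsilon < 1$ (the claim being trivial otherwise, as the left-hand side is nonnegative), and choose an integer $L \geq \max\{N_1, m/\epsilon\}$, so that $L \geq N_1$, $m/L \leq \epsilon$, and $L > m$. For each $j \in \{m, m+1, \ldots, L-1\}$ the set $U_j := \Phi_j^{-1}\!\left(B(0,\delta/2)\right)$ is open, since $\Phi_j$ is continuous and $B(0,\delta/2)$ is open, and it contains $x_0$ by the previous paragraph. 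I then set
$$W := \mathcal{O} \cap \bigcap_{j=m}^{L-1} U_j,$$
a finite intersection of open sets all containing $x_0$, hence a nonempty open subset of $\mathcal{O}$.

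Finally I would verify the counting bound. If $x \in W$, then $\Phi_j x \in B(0,\delta/2)$ for every $j \in \{m, \ldots, L-1\}$, and there are $L - m$ such indices, so
$$\frac{\#\left\{ j \leq L-1 : \Phi_j x \in B(0, \delta/2) \right\}}{L} \geq \frac{L - m}{L} = 1 - \frac{m}{L} \geq 1 - \epsilon,$$
which is exactly the desired conclusion.

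I do not expect a serious obstacle here: the whole content is the observation that membership in some $\ker \Phi_m$ forces the orbit into the identity from time $m$ onward, combined with the density hypothesis, which guarantees such near-kernel points inside every open set. The only points requiring care are bookkeeping ones, namely that $x_0$ and $m$ must be chosen \emph{before} $L$ and $W$ (so that the intersection defining $W$ is finite and hence open), and that the at most $m$ initial iterates we cannot control are rendered harmless once $L$ is taken large relative to $m/\epsilon$.
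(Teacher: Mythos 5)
Your proposal is correct and follows essentially the same route as the paper's proof: both pick a point of $\ker \Phi_m$ inside $\mathcal{O}$ via the density hypothesis, take $W$ to be $\mathcal{O}$ intersected with finitely many preimages of $B(0,\delta/2)$ under the later maps $\Phi_j$ (the paper just writes these preimages as $\Phi_m^{-1}\tau(m+\ell,m)^{-1}B(0,\delta/2)$), and choose the horizon large enough that the at most $m$ uncontrolled initial indices have proportion below $\epsilon$. The only differences are bookkeeping: the paper's window of controlled indices is $[m, m+L-1]$ with total length $m+L$, while yours is $[m, L-1]$ with total length $L$, and your explicit reduction to $\epsilon < 1$ tidies a corner case the paper leaves implicit.
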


\begin{proof}
Choose $m \in \mathbb{N} , a \in \ker \Phi_m$ such that $a \in \mathcal{O}$. Choose $N_2 \in \mathbb{N}$ such that
$$\frac{m}{m + N_2} < \epsilon .$$
Set $L_0 = \max \{N_1, N_2\}$. Let $U \subseteq G$ be the open neighborhood of $0$ given by
$$U = \bigcap_{\ell = 0}^{L_0 - 1} \tau(m + \ell, m)^{-1} B(0, \delta / 2) .$$
Finally set
$$W = \left( \Phi_m^{-1} U \right) \cap \mathcal{O} .$$
Then $a \in W$, and $\Phi_{m + \ell} W \subseteq B(0, \delta / 2)$ for all $\ell \in \{0, 1, 2, \ldots, L_0 - 1 \}$. Let $L = L_0 + m$. Then
$$\frac{ \# \left\{ j \leq L - 1 : \Phi_j x \in B(0, \delta / 2) \right\} }{L_0 + m} \geq \frac{L_0}{L_0 + m}	= 1 - \frac{m}{m + L_0} \geq 1 - \frac{m}{m + N_2} \geq 1 - \epsilon .$$
\end{proof}

\begin{proof}[Proof of Theorem \ref{Mance Theorem}]
Let $f : G \to [0, 1]$ be the continuous function
$$f(x) = \begin{cases}
	1	& \rho(x, 0) \leq \frac{\delta}{2} , \\
	2 - \frac{2}{\delta} \rho(x, 0)	& \frac{\delta}{2} \leq \rho(x, 0) \leq \delta , \\
	0	& \rho(x, 0) \geq \delta ,
\end{cases}$$
where $\delta > 0$ is chosen such that $\mu(B(0, \delta)) < \frac{1}{2}$, which exists by Lemma \ref{Small balls}. Then $\chi_{B(0, \delta / 2)} \leq f \leq \chi_{B(0, \delta)}$, so
$$\mu(B(0, \delta / 2)) \leq \int f \mathrm{d} \mu \leq \mu(B(0, \delta)) < \frac{1}{2} . $$

For $K \in \mathbb{N}$, let $A_K$ be the set
$$A_K = \left\{ x \in G : \textrm{$\frac{1}{k} \sum_{i = 0}^{k - 1} f \left( \Phi_i x \right) < \frac{2}{3}$ for all $k \geq K$} \right\} .$$
We claim the set $A_K$ is nowhere dense. But the set of $x \in G$ such that $\left( \Phi_n x \right)_{n = 0}^\infty$ is uniformly distributed is contained in $\bigcup_{K = 1}^\infty A_K$, so if we can show that $A_K$ is nowhere dense for all $K \in \mathbb{N}$, then the theorem will be proven. Now fix $K \in \mathbb{N}$.

Let $\mathcal{O} \subseteq G$ be a nonempty open subset of $G$. By Lemma \ref{Concentrated at zero}, there exists a nonempty open subset $W \subseteq \mathcal{O}$ such that
$$\frac{\# \left\{ j \leq L - 1 : \Phi_j x \in B(0, \delta / 2) \right\}}{L} \geq \frac{2}{3} $$
for $x \in W$, where $L \geq K$. So
$$\frac{1}{k} \sum_{i = 0}^{k - 1} f \left( \Phi_i x \right) \geq \frac{1}{k} \sum_{i = 0}^{k - 1} \chi_{B(0, \delta / 2)} \left( \Phi_i x \right) \geq \frac{2}{3} .$$
Therefore $W \subseteq \mathcal{O} \setminus A_K$.
\end{proof}

Our proof of Theorem \ref{Mance Theorem} is based off of \cite[Theorem 8.3.1]{ManceDissertation}. That theorem can be interpreted as a special case of Theorem \ref{Mance Theorem} in the case where $G = \mathbb{R} / \mathbb{Z}$, though it is stated there in the language of normality with respect to a Cantor series.

However, this result can be strengthened under some mild additional assumptions. Theorem \ref{Mance Theorem} states that the family of $x \in G$ for which $\frac{1}{k} \sum_{i = 0}^{k - 1} f \left( \Phi_i x \right) \stackrel{k \to \infty}{\to} \int f \mathrm{d} \mu$ for all $f \in C(G)$ is meager. However, it can by shown that under some additional assumptions, there exists $f \in C(G)$ such that the family of $x \in G$ for which $\lim_{k \to \infty} \frac{1}{k} \sum_{i = 0}^{k - 1} f \left( \Phi_i x \right)$ exists is meager.

\begin{Lem}\label{Changes on a subset of density $0$}
Let $f \in C(G)$, and let $(x_n)_{n = 0}^\infty$ be a sequence in $G$ such that
$$\frac{1}{k} \sum_{i = 0}^{k - 1} f (x_i) \stackrel{k \to \infty}{\to} \lambda$$
for some $\lambda \in \mathbb{C}$. Let $I \subseteq \mathbb{N}_0$ be a subset of density $0$, i.e. such that
$$\frac{1}{k} \sum_{i = 0}^{k - 1} \chi_I (i) \stackrel{k \to \infty}{\to} 0 .$$
Let $(y_n)_{n = 0}^\infty$ be a sequence in $G$ such that $\left\{ n \in \mathbb{N}_0 : x_n \neq y_n \right\} \subseteq I$. Then
$$\frac{1}{k} \sum_{i = 0}^{k - 1} f (y_i) \stackrel{k \to \infty}{\to} \lambda .$$

In particular, if $(x_n)_{n = 0}^\infty$ is uniformly distributed, then $(y_n)_{n = 0}^\infty$ is uniformly distributed.
\end{Lem}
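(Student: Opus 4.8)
The plan is to reduce the claim to the boundedness of $f$ together with the density-zero hypothesis on $I$, via a single triangle-inequality estimate comparing the two Ces\`aro averages. First I would note that since $G$ is compact and $f$ is continuous, $f$ is bounded, say $\|f\|_\infty = M < \infty$; in particular $|f(x_i) - f(y_i)| \le 2M$ for every $i \in \mathbb{N}_0$, with $M$ independent of $k$.

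Next, I would exploit the crucial structural fact that $x_n = y_n$ whenever $n \notin I$, so that the summand $f(x_i) - f(y_i)$ vanishes off $I$ and may be multiplied by $\chi_I(i)$ without changing the sum. Writing the difference of the two averages and applying the triangle inequality gives
\begin{align*}
\left| \frac{1}{k}\sum_{i=0}^{k-1} f(x_i) - \frac{1}{k}\sum_{i=0}^{k-1} f(y_i) \right| & \le \frac{1}{k}\sum_{i=0}^{k-1} |f(x_i) - f(y_i)| \\
& = \frac{1}{k}\sum_{i=0}^{k-1} |f(x_i) - f(y_i)| \chi_I(i) \\
& \le \frac{2M}{k}\sum_{i=0}^{k-1}\chi_I(i) \stackrel{k \to \infty}{\to} 0 ,
\end{align*}
where the final limit is exactly the density-zero hypothesis on $I$. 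Then I would conclude that the two Ces\`aro averages differ by a quantity vanishing in the limit, so that $\frac{1}{k}\sum_{i=0}^{k-1} f(y_i)$ converges to the same limit $\lambda$ as $\frac{1}{k}\sum_{i=0}^{k-1} f(x_i)$.

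Finally, for the \emph{in particular} clause, I would apply the main estimate to an arbitrary $f \in C(G)$ with $\lambda = \int f \, \mathrm{d}\mu$: uniform distribution of $(x_n)_{n=0}^\infty$ gives convergence of the $x$-average to $\int f \, \mathrm{d}\mu$ for every $f \in C(G)$, and the argument above transfers this to the $y$-average, which yields uniform distribution of $(y_n)_{n=0}^\infty$ directly from the definition. I would not expect any genuine obstacle here; the only points requiring care are that the bound $M = \|f\|_\infty$ is genuinely independent of $k$, and that the density-zero hypothesis is invoked in precisely the stated form, counting indices $i \le k-1$ with $\chi_I(i) = 1$.
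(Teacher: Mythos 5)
Your proposal is correct and follows essentially the same route as the paper's own proof: both insert the factor $\chi_I(i)$ using the fact that $x_i = y_i$ off $I$, bound the summands by $2\left\|f\right\|_\infty$, and invoke the density-zero hypothesis to conclude the averages have the same limit, then obtain the uniform-distribution statement by applying the estimate to every $f \in C(G)$ with $\lambda = \int f \, \mathrm{d}\mu$. No gaps or meaningful differences to report.
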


\begin{proof}
First, fix $f \in C(G)$, and suppose that $\frac{1}{k} \sum_{i = 0}^{k - 1} f (x_i) \stackrel{k \to \infty}{\to} \lambda$. Then
\begin{align*}
\frac{1}{k} \sum_{i = 0}^{k - 1} f(y_i)	& = \left( \frac{1}{k} \sum_{i = 0}^{k - 1} f(x_i) \right) + \left( \frac{1}{k} \sum_{i = 0}^{k - 1} \chi_{I}(i) (f(y_i) - f(x_i)) \right) \\
\Rightarrow \left| \left( \frac{1}{k} \sum_{i = 0}^{k - 1} f(y_i) \right) - \left( \frac{1}{k} \sum_{i = 0}^{k - 1} f(x_i) \right) \right|	& \leq \frac{1}{k} \sum_{i = 0}^{k - 1} \chi_{I} (i) \left| f(x_i) - f(y_i) \right| \\
	& \leq \frac{1}{k} \sum_{i = 0}^{k - 1} \chi_{I} (i) \left( 2 \left\| f \right\| \right) \\
	& \stackrel{k \to \infty}{\to} 0 .
\end{align*}

Now, suppose that $(x_n)_{n = 0}^\infty$ is uniformly distributed. Then the first part of this lemma tells us that for every $g \in C(G)$, we have that $\lim_{k \to \infty} \frac{1}{k} \sum_{i = 0}^{k - 1} g(y_i)$ exists and is equal to $\lim_{k \to \infty} \frac{1}{k} \sum_{i = 0}^{k - 1} g(x_i) = \int g \mathrm{d} \mu$.
\end{proof}

\begin{Lem}\label{Normal is dense}
Let $G$ be a compact abelian metizable group. If the set
$$F = \left\{ x \in G : \textrm{$\left( \Phi_n x \right)_{n = 0}^\infty$ is uniformly distributed} \right\}$$
is nonempty, and $\bigcup_{m = 1}^\infty \ker \Phi_m$ is dense in $G$, then $F$ is dense in $G$.
\end{Lem}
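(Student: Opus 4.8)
The plan is to exploit the translation structure of $G$ together with the density-zero perturbation result already established in Lemma \ref{Changes on a subset of density $0$}. First I would fix a point $x_0 \in F$, which exists since $F$ is assumed nonempty. The key observation is that adding a kernel element changes the orbit only in finitely many coordinates: if $a \in \ker \Phi_m$ for some $m$, then because the $T_i$ are homomorphisms we have the factorization $\Phi_n = (T_n \cdots T_{m+1}) \Phi_m$ for $n \geq m$, so
$$\Phi_n(x_0 + a) = \Phi_n x_0 + \Phi_n a = \Phi_n x_0 + (T_n \cdots T_{m+1}) \Phi_m a = \Phi_n x_0$$
for every $n \geq m$. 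Hence the sequences $(\Phi_n x_0)_{n=0}^\infty$ and $(\Phi_n(x_0 + a))_{n=0}^\infty$ agree for all $n \geq m$, so they differ only on the finite (and therefore density-zero) set $\{0, 1, \ldots, m-1\}$.

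Next I would invoke Lemma \ref{Changes on a subset of density $0$}: since $(\Phi_n x_0)_{n=0}^\infty$ is uniformly distributed and the two sequences differ only on a set of density zero, the sequence $(\Phi_n(x_0 + a))_{n=0}^\infty$ is uniformly distributed as well. Therefore $x_0 + a \in F$. As $a$ was an arbitrary element of $\bigcup_{m=1}^\infty \ker \Phi_m$, this establishes the inclusion
$$x_0 + \bigcup_{m=1}^\infty \ker \Phi_m \subseteq F .$$

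Finally, since $\bigcup_{m=1}^\infty \ker \Phi_m$ is dense in $G$ by hypothesis and translation by $x_0$ is a homeomorphism of $G$, the translated set $x_0 + \bigcup_{m=1}^\infty \ker \Phi_m$ is dense in $G$. As $F$ contains this dense subset, $F$ itself is dense, which completes the argument.

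I do not anticipate any serious obstacle here; the entire content lies in the observation that an element of $\ker \Phi_m$ perturbs the orbit $(\Phi_n x)_n$ only in its first $m$ entries, after which uniform distribution is preserved by the earlier density-zero lemma. The only point requiring mild care is the verification that $\Phi_n a = 0$ for all $n \geq m$, which relies on the factorization $\Phi_n = (T_n \cdots T_{m+1})\Phi_m$ being valid because each $T_i$ is a group homomorphism fixing the identity.
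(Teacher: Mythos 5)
Your proposal is correct and follows essentially the same route as the paper: both arguments rest on the factorization $\Phi_n = (T_n \cdots T_{m+1})\Phi_m$ to show that adding an element of $\ker \Phi_m$ alters the orbit only in its first $m$ terms, and then invoke Lemma \ref{Changes on a subset of density $0$} to conclude that uniform distribution survives the perturbation. The only cosmetic difference is organizational — you state the containment $x_0 + \bigcup_{m=1}^\infty \ker \Phi_m \subseteq F$ and conclude density via translation, whereas the paper verifies directly that $F$ meets every nonempty open set — but these are the same argument.
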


\begin{proof}
Let $U \subseteq G$ be a nonempty open subset, and let $a_0 \in F$. Then there exists $m \in \mathbb{N}$ and $p \in \bigcup_{m = 1}^\infty \ker \Phi_m$ such that $y \in U - p$, so $a_0 + p \in U$. Then $\Phi_n a_0 = \Phi_n (a_0 + p)$ for $n \geq m$, so it follows from Lemma \ref{Changes on a subset of density $0$} that $\left( \Phi_n (a_0 + p) \right)_{n = 0}^\infty$ is also uniformly distributed, i.e. $a_0 + p \in F \cap U$.
\end{proof}

\begin{Thm}\label{Stronger Mance Theorem}
Let $G = (G, \rho)$ be a compact abelian metrizable group with infinitely many elements and metric $\rho$. Let $(T_n)_{n = 1}^\infty$ be a sequence of continuous, surjective group endomorphisms $T_n : G \to G$. Let $\Phi_n = T_n T_{n - 1} \cdots T_1, \Phi_0 = \operatorname{id}_G$, and suppose that $\bigcup_{m = 1}^\infty \ker \Phi_m$ is dense in $G$. Suppose further that the set $F = \left\{ x \in G : \textrm{$\left( \Phi_n x \right)_{n = 0}^\infty$ is uniformly distributed} \right\}$ is nonempty. Then there exists $f \in C(G)$ such that the set of $x \in G$ such that $\lim_{k \to \infty} \frac{1}{k} \sum_{i = 0}^{k - 1} f \left( \Phi_i x \right)$ exists is meager.
\end{Thm}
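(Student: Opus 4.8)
The plan is to reuse the continuous function $f$ built in the proof of Theorem \ref{Mance Theorem}, namely the ``tent'' function equal to $1$ on $B(0,\delta/2)$, supported in $B(0,\delta)$, with $\delta$ chosen so that $\mu(B(0,\delta)) < \tfrac12$; write $c = \int f\,\mathrm{d}\mu$, so that $c \le \mu(B(0,\delta)) < \tfrac12 < \tfrac23$. Abbreviating $A_k(x) = \frac{1}{k}\sum_{i=0}^{k-1} f(\Phi_i x)$, I would decompose the target set according to the value of the limit: if $\lim_k A_k(x)$ exists and equals $\ell$, then either $\ell < \tfrac23$, forcing $\limsup_k A_k(x) < \tfrac23$, or $\ell \ge \tfrac23$, forcing $\liminf_k A_k(x) \ge \tfrac23$. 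Hence
$$\{x : \lim_k A_k(x) \text{ exists}\} \subseteq \{x : \limsup_k A_k(x) < \tfrac23\} \cup \{x : \liminf_k A_k(x) \ge \tfrac23\},$$
and it suffices to show that \emph{both} sets on the right are meager, i.e.\ that $P := \{x : \limsup_k A_k(x) \ge \tfrac23\}$ and $N := \{x : \liminf_k A_k(x) < \tfrac23\}$ are each comeager.

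That $P$ is comeager is essentially the content of Theorem \ref{Mance Theorem}: since $A_k$ is continuous in $x$ (a finite average of the continuous functions $f \circ \Phi_i$), each set $\bigcup_{k \ge K}\{x : A_k(x) > \tfrac23\}$ is open, and Lemma \ref{Concentrated at zero}, applied with $\epsilon = \tfrac14$ and $N_1 = K$, produces inside any nonempty open $\mathcal{O}$ a nonempty open $W$ and some $L \ge K$ on which at least a $\tfrac34$-fraction of the points $\Phi_j x$, $j \le L-1$, lie in $B(0,\delta/2)$; since $f \ge \chi_{B(0,\delta/2)}$ this yields $A_L(x) \ge \tfrac34 > \tfrac23$ for all $x \in W$. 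Thus each of these open sets is dense, so $\bigcap_K \bigcup_{k\ge K}\{x : A_k(x) > \tfrac23\}$ is comeager and contained in $P$.

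The new step is showing $N$ comeager, and this is where the hypotheses that $\bigcup_m \ker\Phi_m$ is dense and $F \ne \emptyset$ enter. By Lemma \ref{Normal is dense} these give that $F$ is dense in $G$. I would show that $N' := \{x : \liminf_k A_k(x) \le c\}$ is comeager, which suffices since $c < \tfrac23$. Writing $N' = \bigcap_{j}\bigcap_{K} \bigcup_{k \ge K}\{x : A_k(x) < c + \tfrac1j\}$ as a countable intersection of open sets, it remains to check that each union is dense: given a nonempty open $\mathcal{O}$, density of $F$ yields $x_0 \in F \cap \mathcal{O}$, and since $A_k(x_0) \to \int f\,\mathrm{d}\mu = c$ there is some $k \ge K$ with $A_k(x_0) < c + \tfrac1{2j}$; continuity of $A_k$ then gives a whole neighborhood of $x_0$ inside $\mathcal{O}$ on which $A_k < c + \tfrac1j$. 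Hence $N'$, and therefore $N$, is comeager.

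I expect the main obstacle to be exactly this last step: unlike the $\limsup$ estimate, which is driven by the kernel-concentration mechanism already developed, forcing the averages to \emph{descend} near $c$ on a dense set has no direct dynamical source and must be imported from the hypothesis $F \ne \emptyset$ via Lemma \ref{Normal is dense}. The delicate point is that one only knows $F$ is dense (not comeager), so the argument must convert a single equidistributed point in each open set, together with continuity of the finite average $A_k$, into an open set of points whose averages dip below $c + \tfrac1j$ at a common, large time $k$. Once both comeager statements are in hand, the displayed inclusion exhibits $\{x : \lim_k A_k(x)\text{ exists}\}$ as a subset of a union of two meager sets, completing the proof.
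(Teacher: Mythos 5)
Your proposal is correct: every step checks out, and it uses the same essential ingredients as the paper's proof --- the same tent function $f$, Lemma \ref{Concentrated at zero} to push finite averages up, and Lemma \ref{Normal is dense} plus continuity of the finite averages $A_k$ to pull them down toward $\int f \, \mathrm{d}\mu$ --- but your decomposition of the limit-exists set is genuinely different. The paper covers that set by the Cauchy-type sets $A_K = \left\{ x : \left| A_{k_1}(x) - A_{k_2}(x) \right| < \tfrac{1}{4} \textrm{ for all } k_1, k_2 \geq K \right\}$ and shows each one is nowhere dense in a single nested construction: inside a given open $\mathcal{O}$ it produces $W_1$ where $A_L \geq \tfrac{7}{8}$, then uses Lemma \ref{Normal is dense} to find an equidistributed point $a \in F \cap W_1$, and shrinks to a neighborhood $W_2$ of $a$ where $A_N < \tfrac{3}{8}$; the two mechanisms must interact geometrically, since the $F$-point has to be located \emph{inside} $W_1$ for $W_1 \cap W_2$ to be a nonempty open subset of $\mathcal{O}$ avoiding $A_K$. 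Your $\limsup$/$\liminf$ split decouples the mechanisms completely: the kernel-concentration mechanism alone yields one countable intersection of dense open sets (contained in $\{ \limsup_k A_k \geq \tfrac{2}{3} \}$), the density of $F$ alone yields another (equal to $\{ \liminf_k A_k \leq c \}$), and their interaction is delegated to the abstract fact that subsets of meager sets and countable unions of meager sets are meager --- no nesting needed. What your route buys is cleaner bookkeeping and a slightly stronger quantitative conclusion (on a comeager set one has simultaneously $\limsup_k A_k \geq \tfrac{2}{3}$ and $\liminf_k A_k \leq c < \tfrac{1}{2}$, so the averages oscillate by a definite amount); what the paper's route buys is that each nowhere-density verification is a single self-contained local argument, which is why it can be phrased directly at the level of the sets $A_K$ without passing through residual sets. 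Incidentally, your handling of the upward estimate (taking $\epsilon = \tfrac{1}{4}$ in Lemma \ref{Concentrated at zero} so that $A_L \geq \tfrac{3}{4} > \tfrac{2}{3}$) is also tidier than the corresponding step in the paper's proof of Theorem \ref{Mance Theorem}, which leaves the choice of $\epsilon$ implicit.
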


\begin{proof}
Choose $\delta > 0$ such that $\mu \left( B(0, \delta) \right) < \frac{1}{8}$, which exists by Lemma \ref{Small balls}. Set
$$f(x) = \begin{cases}
	1	& \rho(x, 0) \leq \frac{\delta}{2} , \\
	2 - \frac{2}{\delta} \rho(x, 0)	& \frac{\delta}{2} \leq \rho(x, 0) \leq \delta , \\
	0	& \rho(x, 0) \geq \delta ,
\end{cases}$$ 

For each $K \in \mathbb{N}$, let $A_K$ be the set
$$A_K = \left\{ x \in G : \textrm{$\left| \left(\frac{1}{k_1} \sum_{i = 0}^{k_1 - 1} f \left( \Phi_i x \right) \right) - \left(\frac{1}{k_2} \sum_{i = 0}^{k_2 - 1} f \left( \Phi_i x \right) \right) \right| < \frac{1}{4}$ for all $k_1, k_2 \geq K$} \right\} .$$
Since the set of $x \in G$ such that $\lim_{k \to \infty} \frac{1}{k} \sum_{i = 0}^{k - 1} f \left( \Phi_i x \right)$ exists is contained in $\bigcup_{K = 1}^\infty A_K$, it will suffice to prove that each $A_K$ is nowhere dense. Now fix $K \in \mathbb{N}$.

Let $\mathcal{O}$ be a nonempty open subset of $G$. By Lemma \ref{Concentrated at zero}, there exists $L \geq K$ and a nonempty open subset $W_1 \subseteq \mathcal{O}$ such that $\frac{1}{L} \sum_{i = 0}^{L - 1} f \left( \Phi_i x \right) \geq \frac{7}{8}$. By Lemma \ref{Normal is dense}, there exists $a \in F \cap W_1$. Since $a \in F$, there exists $N \geq L$ such that
$$k \geq N \Rightarrow \left| \int f \mathrm{d} \mu - \frac{1}{k} \sum_{i = 0}^{k - 1} f \left( \Phi_i a \right) \right| \leq \frac{1}{8} .$$
But since $\int f \mathrm{d} \mu < \frac{1}{8}$, it follows that
$$\frac{1}{N} \sum_{i = 0}^{N - 1} f \left( \Phi_i a \right) < \frac{1}{4} .$$

Since $\frac{1}{N} \sum_{i = 0}^{N - 1} f \circ \Phi_i$ is uniformly continuous, it follows that there exists an open neighborhood $W_2$ of $a$ such that
$$x \in W_2 \Rightarrow \left| \left( \frac{1}{N} \sum_{i = 0}^{N - 1} f \left( \Phi_i x \right) \right) - \left( \frac{1}{N} \sum_{i = 0}^{N - 1} f \left( \Phi_i a \right) \right) \right| < \frac{1}{8} .$$
Then if $x \in W_2$, then
$$\frac{1}{N} \sum_{i = 0}^{N - 1} f \left( \Phi_i x \right) < \left( \frac{1}{N} \sum_{i = 0}^{N - 1} f \left( \Phi_i a \right) \right) + \frac{1}{8} < \frac{1}{4} + \frac{1}{8} = \frac{3}{8} .$$
Therefore, if $x \in W_1 \cap W_2$, then
$$\left( \frac{1}{L} \sum_{i = 0}^{L - 1} f \left( \Phi_i x \right) \right) - \left( \frac{1}{N} \sum_{i = 0}^{N - 1} f \left( \Phi_i x \right) \right) > \frac{7}{8} - \frac{3}{8} = \frac{1}{2}.$$
Thus $a \in W_1 \cap W_2 \subseteq \mathcal{O}$, and $\left( W_1 \cap W_2 \right) \cap A_K = \emptyset$.
\end{proof}

Throughout this section, we have relied heavily on the assumption that $\bigcup_{m = 1}^\infty \ker \Phi_m$ is dense in $G$. This assumption still encompasses a wide class of interesting examples when $G = \mathbb{R}^d / \mathbb{Z}^d, d \in \mathbb{N}$. Endow $G$ with the metric
$$\rho \left( \left( t_1, \ldots, t_d \right) + \mathbb{Z}^d , \left( s_1, \ldots, s_d \right) + \mathbb{Z}^d \right) = \sum_{j = 1}^d \min_{h \in \mathbb{Z}} \left| t_j - s_j + h \right| .$$
Note that the metric $\rho$ is invariant under addition by elements of $G$.

\begin{Lem}\label{Toral estimates}
Let $G = \mathbb{R}^d / \mathbb{Z}^d$, and let $T : G \to G$ be a continuous surjective group endomorphism. Let $A \in \mathbb{Z}^{d \times d}$ be the $d \times d$ integer matrix such that
$$T : \begin{bmatrix}
t_1 \\
\vdots \\
t_d
\end{bmatrix} + \mathbb{Z}^d \mapsto A \begin{bmatrix}
t_1 \\
\vdots \\
t_d
\end{bmatrix} + \mathbb{Z}^d .$$
Then for every $x \in G$ exists $y \in \ker T$ such that $\rho(x, y) \leq d^2 \left\| A^{-1} \right\|_\mathrm{op}$, where the operator norm is taken relative to the standard Euclidean norm on $\mathbb{R}^d$.
\end{Lem}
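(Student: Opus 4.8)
The plan is to exploit the fact that, because $T$ is surjective, the integer matrix $A$ is nonsingular, so $A^{-1}$ exists as a real matrix and $\|A^{-1}\|_{\mathrm{op}}$ is finite. This lets me describe $\ker T$ explicitly: a coset $v + \mathbb{Z}^d$ lies in $\ker T$ exactly when $Av \in \mathbb{Z}^d$, which holds precisely when $v = A^{-1}m$ for some $m \in \mathbb{Z}^d$; in particular $A^{-1}m + \mathbb{Z}^d \in \ker T$ for every $m \in \mathbb{Z}^d$. The underlying idea is that the points $\{A^{-1}m : m \in \mathbb{Z}^d\}$ form a lattice whose fineness is governed by $\|A^{-1}\|_{\mathrm{op}}$, so every target $x$ has a kernel element close by.

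Concretely, I would fix a representative $t = (t_1, \ldots, t_d) \in \mathbb{R}^d$ of $x$ and set $s = At \in \mathbb{R}^d$. Choosing $m \in \mathbb{Z}^d$ to be a nearest integer vector to $s$ (rounding each coordinate), I obtain $\|s - m\|_\infty \leq \tfrac{1}{2}$ and hence $\|s - m\|_2 \leq \tfrac{\sqrt{d}}{2}$. Taking $y = A^{-1}m + \mathbb{Z}^d$, the computation $T(A^{-1}m + \mathbb{Z}^d) = A A^{-1} m + \mathbb{Z}^d = m + \mathbb{Z}^d = 0$ confirms that $y \in \ker T$, so it remains only to estimate $\rho(x, y)$.

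For the estimate, I would write $t - A^{-1}m = A^{-1}(At - m) = A^{-1}(s - m)$, whence $\|t - A^{-1}m\|_2 \leq \|A^{-1}\|_{\mathrm{op}}\,\|s - m\|_2 \leq \tfrac{\sqrt{d}}{2}\|A^{-1}\|_{\mathrm{op}}$. Since each summand $\min_{h \in \mathbb{Z}}\bigl|t_j - (A^{-1}m)_j + h\bigr|$ defining $\rho(x, y)$ is bounded by the absolute value of the $j$th coordinate of $t - A^{-1}m$ (take $h = 0$), I get $\rho(x, y) \leq \|t - A^{-1}m\|_1 \leq \sqrt{d}\,\|t - A^{-1}m\|_2 \leq \tfrac{d}{2}\|A^{-1}\|_{\mathrm{op}}$, which sits comfortably below the asserted bound $d^2\|A^{-1}\|_{\mathrm{op}}$.

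The only genuine work is the norm bookkeeping in the final step: one must pass between the $\ell^1$-flavored torus metric $\rho$, the Euclidean norm against which $\|A^{-1}\|_{\mathrm{op}}$ is measured, and the $\ell^\infty$ control coming from rounding, tracking the $\sqrt{d}$ conversion factors and using the elementary fact that distance-to-the-nearest-integer never exceeds the absolute value. This is where sharpness is lost; the stated constant $d^2$ is far from optimal, but since any bound of the form $C(d)\|A^{-1}\|_{\mathrm{op}}$ suffices for the intended application (controlling how densely $\bigcup_m \ker\Phi_m$ fills $G$), there is no need to optimize it.
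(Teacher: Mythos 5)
Your proof is correct, and its core construction coincides with the paper's: both identify $\ker T$ with the lattice $A^{-1}\mathbb{Z}^d$ taken modulo $\mathbb{Z}^d$, and both take $y = A^{-1}m + \mathbb{Z}^d$ where $m$ is an integer rounding of $At$. Indeed, the paper writes $t = \sum_j \lambda_j \mathbf{f}_j$ in the basis $\mathbf{f}_j = A^{-1}\mathbf{e}_j$, and that coefficient vector $(\lambda_1, \ldots, \lambda_d)$ is exactly your $s = At$; the only difference in the construction is that the paper rounds down, $\ell_j = \lfloor \lambda_j \rfloor$, while you round to the nearest integer. Where the two arguments genuinely diverge is the error estimate. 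The paper splits the residual $\sum_j \kappa_j \mathbf{f}_j$ by the triangle inequality into $d$ separate terms and then bounds each entry $|b_{i,j}|$ of $A^{-1}$ by the column norm $\|A^{-1}\mathbf{e}_j\|_2 \leq \|A^{-1}\|_{\mathrm{op}}$, paying a factor of $d$ twice (once over $j$, once over $i$) and landing on $d^2\|A^{-1}\|_{\mathrm{op}}$. You instead keep the residual as a single vector $t - A^{-1}m = A^{-1}(s - m)$, apply the operator norm exactly once, and pay only the norm-conversion factors: $\sqrt{d}$ to pass from the $\ell^\infty$ rounding bound to $\ell^2$, and another $\sqrt{d}$ to pass from $\ell^2$ to the $\ell^1$-type metric $\rho$, together with the $\tfrac{1}{2}$ from nearest-integer rounding. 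This yields $\tfrac{d}{2}\|A^{-1}\|_{\mathrm{op}}$, which is sharper than the stated $d^2\|A^{-1}\|_{\mathrm{op}}$ and in particular implies it. Either route serves the downstream application equally well, since the subsequent proposition only needs a bound of the form $C(d)\|A^{-1}\|_{\mathrm{op}}$ with $C(d)$ depending on the dimension alone.
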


\begin{proof}
Let $\mathbf{e}_1, \ldots, \mathbf{e}_d$ be the standard basis of the real vector space $\mathbb{R}^d$, and let $\mathbf{f}_j = A^{-1} \mathbf{e}_j$ for $j = 1, \ldots, d$. Then
$$\ker T = \mathbb{Z} \mathbf{f}_1 + \cdots + \mathbb{Z} \mathbf{f}_d .$$
Let $x = (t_1, \ldots, t_d) + \mathbb{Z}^d \in G$. Since $A$ is invertible, we know that $\left\{ \mathbf{f}_1, \ldots, \mathbf{f}_d \right\}$ is a basis for $\mathbb{R}^d$, so there exist $\lambda_1, \ldots, \lambda_d \in \mathbb{R}$ such that $\sum_{j = 1}^d t_j \mathbf{e}_j = \sum_{j = 1}^d \lambda_j \mathbf{f}_j$. Let $\ell_j = \lfloor \lambda_j \rfloor$, and let $y = \sum_{j = 1}^d \ell_j \mathbf{f}_j + \mathbb{Z}^d$. Set $\kappa_j = \lambda_j - \ell_j \in [0, 1)$. Then $y \in \ker T$, and
\begin{align*}
\rho(x, y)	& = \rho \left(0, y - x \right) = \rho \left( 0 , \sum_{j = 1}^d \left( \lambda_j - \ell_j \right) \mathbf{f}_j + \mathbb{Z}^d\right) = \rho \left( 0 , \sum_{j = 1}^d \kappa_j \mathbf{f}_j + \mathbb{Z}^d \right) \\
& \leq \rho \left( 0 , \sum_{j = 1}^1 \kappa_j \mathbf{f}_j + \mathbb{Z}^d \right) + \rho \left( \sum_{j = 1}^1 \kappa_j \mathbf{f}_j + \mathbb{Z}^d , \sum_{j = 1}^2 \kappa_j \mathbf{f}_j + \mathbb{Z}^d \right) + \cdots + \rho \left( \sum_{j = 1}^{d - 1} \kappa_j \mathbf{f}_j + \mathbb{Z}^d , \sum_{j = 1}^d \kappa_j \mathbf{f}_j + \mathbb{Z}^d \right) \\
& = \sum_{j = 1}^d \rho \left( 0 , \kappa_j \mathbf{f}_j + \mathbb{Z}^d \right) .
\end{align*}
For each $j \in \{1, \ldots, d\}$, let $\mathbf{f}_j = \sum_{i = 1}^d b_{i, j} \mathbf{e}_j$. Then
\begin{align*}
\sum_{j = 1}^d \rho \left( 0 , \kappa_j \mathbf{f}_j + \mathbb{Z}^d \right)	& = \sum_{j = 1}^d \rho \left( 0, \kappa_j \sum_{i = 1}^d b_{i, j} \mathbf{e}_i + \mathbb{Z}^d \right) \\
	& = \sum_{j = 1}^d \sum_{i = 1}^d \min_{h \in \mathbb{Z} } \left| h - \kappa_j b_{i, j} \right| \\
	& \leq \sum_{j = 1}^d \sum_{i = 1}^d \kappa_j \left| b_{i, j} \right| \\
	& \leq \sum_{j = 1}^d \sum_{i = 1}^d \left| b_{i, j} \right|	& (0 \leq \kappa_j < 1) \\
	& \leq \sum_{j = 1}^d \sum_{i = 1}^d \left\| A^{-1} \mathbf{e}_j \right\| \\
	& \leq \sum_{j = 1}^d \sum_{i = 1}^d \left\| A^{-1} \right\|_\mathrm{op} \\
	& = d^2 \left\| A^{-1} \right\|_\mathrm{op} .
\end{align*}
\end{proof}

\begin{Prop}
Let $G = \mathbb{R}^d / \mathbb{Z}^d$, and let $(T_n)_{n = 1}^\infty$ be a sequence of continuous surjective group endomorphisms of $G$ onto itself. For each $n \in \mathbb{N}$, let $A_n \in \mathbb{Z}^{d \times d}$ be the $d \times d$ integer matrix such that
$$T_n : \begin{bmatrix}
	t_1 \\
	\vdots \\
	t_d
\end{bmatrix} + \mathbb{Z}^d \mapsto A_n \begin{bmatrix}
	t_1 \\
	\vdots \\
	t_d
\end{bmatrix} + \mathbb{Z}^d .$$
Then if $\liminf_{n \to \infty} \left\| \left( A_n A_{n - 1} \cdots A_1 \right)^{-1} \right\|_\mathrm{op} = 0$, then $\bigcup_{m = 1}^\infty \ker \Phi_m$ is dense in $G$.
\end{Prop}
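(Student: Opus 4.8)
The plan is to reduce the whole statement to Lemma \ref{Toral estimates} applied to the composite endomorphisms $\Phi_m$, together with a subsequence extraction. First I would record the bookkeeping facts that make this legitimate. Since each $T_n$ is a continuous surjective endomorphism of $\mathbb{R}^d/\mathbb{Z}^d$, it is measure-preserving, so its integer matrix $A_n$ satisfies $\det A_n \neq 0$. The composite $\Phi_m = T_m T_{m-1} \cdots T_1$ is again continuous and surjective (a composition of surjections), and its matrix is the product $A_m A_{m-1} \cdots A_1$, which is therefore invertible over $\mathbb{Q}$. In particular the quantity $\left\| (A_m \cdots A_1)^{-1} \right\|_{\mathrm{op}}$ occurring in the hypothesis is well-defined, and $\Phi_m$ is exactly the kind of endomorphism to which Lemma \ref{Toral estimates} applies.

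Next I would apply Lemma \ref{Toral estimates} directly to $\Phi_m$, whose associated integer matrix is $A_m \cdots A_1$. This yields, for every $x \in G$, a point $y \in \ker \Phi_m$ with
$$\rho(x, y) \leq d^2 \left\| (A_m \cdots A_1)^{-1} \right\|_{\mathrm{op}} .$$
Thus the approximation error one can achieve, using the kernel of the single map $\Phi_m$, is controlled entirely by the operator norm of the inverse of its matrix.

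Finally I would invoke the $\liminf$ hypothesis to finish. By definition of $\liminf$, there is a subsequence $(m_k)_{k=1}^\infty$ along which $\left\| (A_{m_k} \cdots A_1)^{-1} \right\|_{\mathrm{op}} \to 0$. Fix an arbitrary $x \in G$ and an arbitrary $\epsilon > 0$. Choosing $k$ large enough that $d^2 \left\| (A_{m_k} \cdots A_1)^{-1} \right\|_{\mathrm{op}} < \epsilon$, the estimate above produces a point $y \in \ker \Phi_{m_k} \subseteq \bigcup_{m=1}^\infty \ker \Phi_m$ with $\rho(x, y) < \epsilon$. Since $x$ and $\epsilon$ were arbitrary, $\bigcup_{m=1}^\infty \ker \Phi_m$ is dense in $G$.

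I do not expect a serious obstacle, since all the real content is carried by Lemma \ref{Toral estimates}. The only point requiring a little care — more a matter of precision than difficulty — is that the hypothesis is stated as a $\liminf$ rather than a genuine limit, so density must be extracted along the subsequence realizing the $\liminf$ rather than from the full sequence. This is harmless for the conclusion: density only demands that \emph{some} $\ker \Phi_m$ contain a point near each given $x$, so a single good scale $m_k$ per approximation suffices.
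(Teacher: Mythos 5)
Your proof is correct and takes essentially the same route as the paper's: apply Lemma \ref{Toral estimates} to $\Phi_m$, whose matrix is $A_m A_{m-1} \cdots A_1$, and use the $\liminf$ hypothesis to select $m$ with $d^2 \left\| (A_m \cdots A_1)^{-1} \right\|_{\mathrm{op}}$ below any prescribed $\epsilon$. Your added bookkeeping (surjectivity of $\Phi_m$, invertibility of the product matrix, and the explicit subsequence extraction) only makes explicit what the paper's terser argument leaves implicit.
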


\begin{proof}
Let $x \in G$, and let $\delta > 0$. Choose $m \in \mathbb{N}$ such that $\left\| \left( A_m A_{m - 1} \cdots A_1 \right)^{-1} \right\|_\mathrm{op} < \frac{\delta} {d^2}$. Then $\Phi_m$ is implemented by the matrix $A_m A_{m - 1} \cdots A_1$, so Lemma \ref{Toral estimates} tells us there exists $y \in \ker \Phi_m$ such that $\rho(x, y) \leq d^2 \left\| \left( A_m A_{m - 1} \cdots A_1 \right)^{-1} \right\|_\mathrm{op} < \delta$. Therefore $\bigcup_{m = 1}^\infty \ker \Phi_m$ is dense in $G$.
\end{proof}

	\bibliography{Bibliography}
\end{document}